%% file: geometric_routing_arXiv.tex
\documentclass[a4paper,UKenglish,cleveref, autoref, thm-restate]{lipics-v2021}

\pdfoutput=1 
\hideLIPIcs  

\usepackage{cite}
\usepackage{amssymb}
\usepackage{mathtools}
\usepackage{algorithm}
\usepackage{algorithmicx}
\usepackage[noend]{algpseudocode}
\input{macrosetup.tex}

\newtheorem*{theorem*}{Theorem}

\title{Geometric Routing in Geometric Inhomogeneous Random Graphs} 

\author{Yu-Cheng Chiu}{Department of Mathematics, ETH Z\"urich, Switzerland}{ycchiu222@gmail.com}{https://orcid.org/0009-0002-9640-8221}{}
\author{Marc Kaufmann}{Department of Computer Science, ETH Z\"urich, Switzerland}{marc.kaufmann@inf.ethz.ch}{https://orcid.org/0000-0001-8489-2058}{Swiss National Science Foundation grant number 200021\_192079}
\author{Kostas Lakis}{Department of Computer Science, ETH Z\"urich, Switzerland}{klakis@ethz.ch}{https://orcid.org/0009-0004-5595-1839}{Swiss National Science Foundation grant number 10003390}
\author{Ulysse Schaller}{Department of Computer Science, ETH Z\"urich, Switzerland}{ulysse.schaller@inf.ethz.ch}{https://orcid.org/0009-0002-1416-1086}{Swiss National Science Foundation grant number 200021\_192079}

\authorrunning{Y.C. Chiu, M. Kaufmann, K. Lakis, U. Schaller} 

\Copyright{Yu-Cheng Chiu, Marc Kaufmann, Kostas Lakis, Ulysse Schaller} 

\ccsdesc[500]{Theory of computation~Random network models}
\ccsdesc[300]{Theory of computation~Graph algorithms analysis}
\ccsdesc[300]{Networks~Routing protocols}

\keywords{geometric inhomogeneous random graphs (GIRGs); hyperbolic random graphs (HRGs); greedy routing; geometric routing; navigability; small-world phenomenon; decentralized algorithms}
 
\acknowledgements{We would like to thank Johannes Lengler for suggesting this problem to us and for the insightful conversations about it.}

\nolinenumbers 

\begin{document}

\maketitle

\begin{abstract}
We present the first rigorous analysis of decentralized geometric routing in Geometric Inhomogeneous Random Graphs (GIRGs), a weight-agnostic variant of the greedy routing protocol. While greedy routing in GIRGs is known to explain the \emph{algorithmic small-world phenomenon} by finding ultra-short paths of length \(\Theta(\log \log n)\), it assumes additional knowledge of vertex weights beyond geometry, an assumption that is often restrictive or unavailable. We investigate whether the underlying geometry alone is sufficient for efficient navigation. We prove that for power-law weight exponent \(\tau \in (2,3)\) and geometric decay parameter \(\alpha > \tau-1\), geometric routing succeeds with constant probability and finds ultra-short paths of length \(\Theta(\log \log n)\), matching the optimal asymptotic guarantees for greedy routing. Our analysis further reveals that, upon success, both protocols follow a similar two-phase trajectory, consisting of a rapid ascent to the heavy vertices, followed by efficient navigation to the target. These results demonstrate that, in the appropriate regime, the network’s geometry alone implicitly guides the path to the target through its high-weight core.
\end{abstract}

\section{Introduction}
The phenomenon that social networks contain surprisingly short paths, and admit decentralized navigation, was popularized by  Milgram's ``small-world'' experiment \cite{M67,TM77}. This motivated the study of decentralized routing algorithms on random graph models of complex networks.
Kleinberg's seminal work~\cite{K00-1,K00} provided the first theoretical explanation of the algorithmic small-world phenomenon by showing that greedy routing succeeds in \(\Oh{\log^2 n}\) steps on augmented grids. Despite its influence, Kleinberg's model has several shortcomings, such as fragile parameter choices, perfect lattice structure, and homogeneous degrees. In particular, it yields a rather large stretch compared to the average distance. While a long line of works \cite{MN04,NM05,FM06,FGP04,FG10,FG14} has extended Kleinberg’s framework to more general settings, none has addressed all deficiencies simultaneously. See \cite{BKLMM22} for a comprehensive discussion.

To overcome these deficiencies, we focus on \emph{Geometric Inhomogeneous Random Graphs} (GIRGs)~\cite{BKL19}, which have emerged as a compelling framework for modeling scale-free networks. In this model, vertices are distributed independently and uniformly at random in a geometric space, each assigned a weight drawn from a power-law distribution. The connection probability combines two effects: it increases with vertex weights (favoring hubs) and decreases with geometric distance (favoring locality). A formal definition is provided in \Cref{subsec:GIRG_model}.

The study of \emph{greedy routing} in GIRGs \cite{BKLMM22} provides rigorous theoretical explanations for the algorithmic small-world phenomenon. Lacking global knowledge of the network topology, the strategy is to maximize, at each step, the connection probability to the target using only local neighbor information. An intrinsic challenge in inhomogeneous geometric graphs is that an ideal neighbor must balance proximity to the target \(d(v,t)\) with vertex weight \(w_v\). In \(d\)-dimensional GIRGs this trade-off is captured by the objective function \(\phi(v) \coloneqq \frac{w_v}{d(v,t)^d}\), naturally derived from its edge probability. Bringmann et al.~\cite{BKLMM22} obtained:
\begin{itemize}
\item Greedy routing succeeds with probability \(\Om{1}\).
\item Conditioned on success, the number of steps is at most \( \frac{2 + \oh{1}}{\abs{\log(\tau-2)}} \log\log n \) a.a.s.
\end{itemize}
Both results are asymptotically optimal. Specifically, the number of steps matches the average distance in the giant component, yielding a stretch of \(1+\oh{1}\).

Towards practical realizability, they further showed that greedy routing is robust under relaxations, an approximation of the objective function \(\phi(v)\) suffices for success with optimal stretch. However, in realistic settings like social networks or Internet routing, even approximate weight information might be inaccessible due to privacy concerns or limited storage.
We therefore address these limitations by studying routing processes purely based on geometry.

Motivated by this, we study \emph{geometric routing} in GIRGs. While greedy routing prioritizes connection probability, geometric routing acts greedily on geometry alone, simply forwarding to the neighbor closest to the target in the geometric space. This approach is inherently robust in real-world scenarios, circumventing issues of inaccurate popularity estimates (as in Milgram’s experiment) and adapting naturally to dynamic Internet-like graphs, where weights fluctuate over time and new nodes join, while the underlying geometry remains fixed.

Despite these advantages, geometric routing faces fundamental challenges. Without explicit weight information, the protocol must rely solely on the implicit geometry-weight correlation to find short paths via the heavy core. Consequently, its success guarantees and efficiency remain unclear.
A central goal of this paper is to analyze whether this weight-agnostic approach can match the performance of greedy routing.

\paragraph*{Our contributions}

We give the first rigorous analysis of geometric routing in GIRGs. We focus on the regime with power-law exponent \(2<\tau<3\) and decay parameter \(\alpha>\tau-1\), where larger $\alpha$ corresponds to a sharper geometric
decay in the GIRG edge probabilities, we prove:
\begin{itemize}
\item Geometric routing succeeds with probability \(\Om{1}\). 
\item Conditioned on success, the main part takes at most \( \frac{2 + \oh{1}}{\abs{\log(\tau-2)}} \log\log n \) steps a.a.s.
\end{itemize}

Remarkably, in this regime, geometric routing matches the asymptotically optimal performance of greedy routing, yielding a stretch of \(1+o(1)\) a.a.s., conditioned on success. We further reveal that both protocols exhibit a similar two-phase structure:
\begin{enumerate}
\item{\textbf{First phase: ascent to the heavy core.}} 

For both protocols, vertex weights grow doubly exponentially with rate \(1/(\tau-2)\).

\item{\textbf{Second phase: navigation from the core to the target.}}
\begin{itemize}
\item \emph{Geometric routing}: Requires a specific \emph{preparation step} (combining weight growth and distance reduction) at the phase transition, before the geometric distance to the target decays doubly exponentially with rate \(\tau-2\).
\item \emph{Greedy routing}: Proceeds seamlessly without an intermediate step. The objective function \(\phi(v)\) simply continues to grow doubly exponentially with rate \(\tau-2\).
\end{itemize}
\end{enumerate}

\subsection{Related Works}
\paragraph*{Mean field analysis and simulations of geometric routing}
The idea of weight-agnostic geometric routing\footnote{In \cite{BK09}, this is termed ``greedy routing''. We use geometric routing to avoid confusion.} was previously explored by Bogun{\'a} and Krioukov~\cite{BK09} in scale-free networks embedded in a homogeneous isotropic \(D\)-dimensional metric space.
Using mean-field analysis for the first phase of the process, they concluded that, in our terminology, the navigable regime corresponds to \(\alpha> 1+1/d\).
Bogun{\'a} et al. further performed numerical simulations and comparisons with real-world data \cite{BK09,BKC09}, observing that for a fixed clustering strength (governed by \(\alpha\)), there exists a critical power-law exponent \(\tau\) that determines navigability as network size grows.

This observation and other empirical findings of Boguná et al.~\cite{BKC09} closely aligned with our theoretical results for GIRGs. In particular, their simulations demonstrated that geometric routing succeeds with a positive fraction of pairs in the strong-clustering (large \(\alpha\)) and heavy-tailed (small \(\tau\)) regime. This matches our result that geometric routing in GIRGs succeeds with constant probability when \(\alpha>\tau-1\). Moreover, they observed that within the navigable regime, the probability of hopping to a higher-degree neighbor remains high until reaching a distance-dependent critical degree, after which it decays rapidly. This observation corroborates our proposed two-phase trajectory via the heavy core. Finally, they reported that the average path length is polylogarithmic in the network size. Their simulations appear consistent with our finding that successful geometric routing in GIRGs requires \(\Th{\log \log n}\) steps, noting that the distinction between \(\log\log n\) and more general polylogarithmic bounds may not be noticeable for the system sizes considered in their experiments.

\paragraph*{Hyperbolic random graphs (HRGs) and hyperbolic embeddings}
A prominent special case of GIRGs, \emph{hyperbolic random graphs} (HRGs), have been extensively validated in experiments as a realistic model of real-world networks \cite{ST08,BKC09,BPK10,KPKVB10,PPK14,GBAS16,GASB19,ZSZZ11,LGZZAKW16}.
HRGs originated in Robert Kleinberg's\footnote{Not to be confused with Jon Kleinberg’s small-world model \cite{K00}.} work
on routing in ad hoc wireless networks \cite{K07}. Hyperbolic geometry turned out to work well for scale-free networks \cite{F19}. Subsequent research on HRGs includes theoretical foundation \cite{KPKVB10}, experimental validations of greedy processes \cite{KPBV09,PKBV10,CC09}, and empirical evidence for navigability in real-world networks \cite{BKC09,BPK10}.

Bringmann et al.~\cite{BKL19,BKLMM22} formally established that HRGs are essentially a special case of GIRGs, and the ``greedy geometric processes'' studied in HRGs \cite{BKC09,BPK10,KPBV09,PKBV10,CC09} are essentially greedy routing in 1-dimensional GIRGs.
In HRGs, nodes are typically embedded in a 2-dimensional hyperbolic disk\footnote{For recent progress on higher-dimensional hyperbolic embeddings, see Jankowski et al.~\cite{JABS23}.} without assigned weight. The radial and angular coordinates represent popularity and similarity, which translate naturally to weight and 1-dimensional distance in GIRGs. 
GIRGs extend this to arbitrary dimension and avoid the complexity of hyperbolic trigonometry. HRGs are known to be sparse and scale-free \cite{GPP12}, have constant clustering coefficient \cite{CF16,GPP12}, polylogarithmic diameter \cite{FK18,KM14}, and a giant component \cite{BFM15,FM18}. Bringmann et al.~\cite{BKL17,BKL19,BKL25} proved that GIRGs inherit all aforementioned properties.

Driven by the model's realism, recent studies have focused on developing hyperbolic embeddings with specific routing-oriented properties \cite{BFKL18,GASB19,BFKK20,JABS23, JLZFS22,LLRM24,YJJWH22,SP23,ASB25}. Garc{\'\i}a-P{\'e}rez et al.~\cite{GASB19} introduced \emph{Mercator}, a fast, accurate, and robust algorithm for embedding real-world networks into the hyperbolic plane. Bl{\"a}sius et al.~\cite{BFKK20} proposed embeddings that guarantee the success of greedy routing while maintaining low stretch. More recently, Jankowski et al.~\cite{JABS23} studied \(D\)-Mercator, extending Mercator to higher-dimensional hyperbolic space, and evaluated whether higher-dimensional embeddings are more powerful using several criteria, including the success rate of geometric routing.

\paragraph*{Routing in generalized GIRGs:}
Sevilla and Fern\'andez Anta~\cite{SF20} studied routing protocols in GIRGs and the generalized \((\kappa,\pi)\)-KG model, which interpolates between Kleinberg's model and scale-free networks.
They analyzed a modified greedy objective that introduces a ``critical ball'' of radius \(r\) around the target, within which routing purely minimizes geometric distance. Their simulations show that increasing \(r\) improves success probability at the cost of slightly longer paths.

\section{Preliminaries}
\subsection{Geometric Inhomogeneous Random Graphs (GIRGs)}\label{subsec:GIRG_model}
A \emph{Geometric Inhomogeneous Random Graph (GIRG)} is a graph \(G = (V,E)\) defined over a geometric space \(\mathcal{X}\) as follows:

\emph{Geometric space:}
We consider a \(d\)-dimensional torus \(\mathcal{X}\) of volume \(n\), which is essentially a \(d\)-dimensional cube \([0, n^{\frac{1}{d}}]^d\) with torus topology. We consider the \(\infty\)-norm on \(\mathcal{X}\), i.e.\ \(\forall x,y\in\mathcal{X},\,\,\norm{x-y}_{\infty} \coloneqq \max_{i\in[d]} \left\{\min \left(\abs{x_i - y_i},n^{\frac{1}{d}}-\abs{x_i-y_i} \right)\right\}\). The choice of the \(\infty\)-norm is a matter of technical convenience, as the resulting model would remain the same if any other norm were used, as any constant factor deviation is absorbed by the asymptotic notation used in the edge probability definitions below. For the remainder of this paper, we will omit the subscript \(\infty\) from the norm and simply write \(\norm{x - y}\).

\emph{Vertices:}
The set of vertices \(V\) is given by a Poisson point process on \(\mathcal{X}\) with intensity \(1\). In particular, the expected number of vertices is \(n\), and the positions of all vertices are chosen uniformly at random in \(\mathcal{X}\). For each measurable \(\mathcal{A} \subseteq \mathcal{X}\), let \(V(\mathcal{A})\) denote the set of vertices in \(\mathcal{A}\), then \(\abs{V(\mathcal{A})}\) is Poisson distributed with mean \(\textrm{Vol}(\mathcal{A})\). Moreover, if \(A,B \subseteq \mathcal{X}\) are disjoint measurable sets, then \(\abs{V(\mathcal{A})}\) and \(\abs{V(\mathcal{B})}\) are independent Poisson variables.

\emph{Weights:} 
Let \(2 < \tau < 3\) be a fixed constant. Each vertex \(v\in V\) independently receives a weight \(w_v\) drawn from a probability distribution with density
\[
f(w) \coloneqq
\begin{cases}
\Th{w^{-\tau}} & \text{if } w \geq 1,\\
0 & \text{if } w < 1.
\end{cases}
\]
That is, a power-law distribution with exponent \(\tau\) and minimal weight\footnote{More generally, \(w_{\min}\) can be a tunable scale parameter. We fix \(w_{\min} = 1\) as is standard, and restoring it to any fixed constant does not introduce significant difficulties to our analysis.} \(1\).

\emph{Edges:} Given the vertex set \(V\), positions \(\{x_v\}_{v\in V}\), and weights \(\{w_v\}_{v\in V}\), for some constant \(\alpha>1\), we connect each pair of distinct vertices \(u,v\) independently with probability
\begin{equation}\tag{EP1}\label{eq:EP1}
p_{uv} \coloneqq
\Th*{
\min\left\{ \Big( \frac{w_u w_v}{\norm{x_u - x_v}^d}\Big)^\alpha, 1
\right\}
}.
\end{equation}
The parameter \(\alpha\) governs the sharpness of the decay below the natural weight-distance threshold, as determined by the ratio \(w_u w_v/\norm{x_u-x_v}^d\).
In \eqref{eq:EP1}, there exists a constant \(c_1 >0\) such that \(p_{uv} = \Theta(1)\) whenever \(u\) and \(v\) are sufficiently close, i.e.\ when \(\norm{x_u - x_v}^d \leq c_1 w_u w_v\). In the \emph{threshold case} \(\alpha = \infty\), the connection probability drops to zero as soon as the vertices are too far apart. Instead of \eqref{eq:EP1}, we require constants \(0 < c_1 \leq c_2\) such that
\begin{equation}\tag{EP2}\label{eq:EP2}
p_{uv}=
\begin{cases}
\Th*{1} & \text{if } \norm{x_u - x_v}^d \leq c_1 w_u w_v,\\
0 & \text{if } \norm{x_u - x_v}^d \geq c_2 w_u w_v.
\end{cases}
\end{equation}
Note that there can be an interval \(\norm{x_u - x_v} \in \left[(c_1 w_u w_v)^{1/d}, (c_2 w_u w_v)^{1/d}\right]\) in which the behavior of \(p_{uv}\) is arbitrary.

In summary, the free parameters of the model are the expected number of vertices \(n\), the power-law parameter \(\tau\), the dimension \(d \in \mathbb{N}\), the decay parameter \(\alpha>1\), and the specific probability function \(p_{uv}\) that satisfies \eqref{eq:EP1} or \eqref{eq:EP2}. In all subsequent theorems and proofs, we assume that all parameters except \(n\) are fixed constants. Consequently, all asymptotic notation is with respect to \(n \to \infty\).

\subsection{Properties of GIRGs}
We next state some basic properties of GIRGs, see \cite{BKL19,BKL25} for proofs.

\begin{lemma}\label{lem:edge_marginal}
Let \(u,v\) be two vertices of a GIRG. Then
\[
\mathop{\mathbb{P}}\limits_{x_u,x_v}\!\big[ \{u,v\}\in E \,\big|\, w_u,w_v \big] = \Th*{ \min\left\{ \frac{w_u w_v}{n},\, 1 \right\} }.
\]
\end{lemma}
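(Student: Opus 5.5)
Conditioned on the weights, integrate out the positions. Since the torus is translation invariant, fix \(x_u\) and let \(x_v\) be uniform on \(\mathcal{X}\) (volume \(n\)). Writing \(r=\norm{x_u-x_v}\), the \(\infty\)-ball of radius \(r\) has volume \((2r)^d\) for \(r\le n^{1/d}/2\), so \(r^d\) has density \(\Th{1/n}\) on \([0,\Th{n}]\). This gives
\[
\mathbb{P}\big[\{u,v\}\in E \,\big|\, w_u,w_v\big] = \frac{1}{n}\int_{\mathcal{X}} p_{uv}(x)\,dx = \Th*{\frac{1}{n}\int_0^{\Th{n}} \min\Big\{\Big(\frac{w_uw_v}{s}\Big)^{\alpha},1\Big\}\,ds}
\]
with \(s=r^d\), under \eqref{eq:EP1}. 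The hidden constant in \eqref{eq:EP1} is harmless because it is uniform over all pairs.

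Set \(W=w_uw_v\) and split into two cases.

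\emph{Case \(W\ge c n\) for a suitable constant \(c\).} Then \(W/s=\Om{1}\) for all \(s\le \Th{n}\), so the integrand is \(\Th{1}\) throughout and the probability is \(\Th{1}=\Th{\min\{W/n,1\}}\).

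\emph{Case \(W< cn\).} Split the integral at \(s=W\).
\begin{itemize}
\item On \([0,W]\) the integrand is \(\Th{1}\), contributing \(\Th{W}\).
\item On \([W,\Th{n}]\) it contributes \(W^\alpha\int_W^\infty s^{-\alpha}ds = W/(\alpha-1)=\Oh{W}\), which is finite because \(\alpha>1\).
\end{itemize}
Dividing by \(n\) gives \(\Th{W/n}\). Together the two cases give the claim for \eqref{eq:EP1}.

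For the threshold case \eqref{eq:EP2}, the same scheme applies.
\begin{itemize}
\item \emph{Lower bound:} \(p_{uv}=\Th{1}\) on the region \(s\le c_1W\), which has measure \(\Th{\min\{W,n\}}\).
\item \emph{Upper bound:} \(p_{uv}=0\) for \(s\ge c_2W\), so the integral is at most \(\min\{c_2W,\Th{n}\}\), using only \(p_{uv}\le 1\) on the ambiguous interval.
\end{itemize}

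The only delicate point is the torus geometry. The ball-volume formula \((2r)^d\) is valid only up to \(r=n^{1/d}/2\), and the maximal distance is \(n^{1/d}/2\). Both facts only affect constants, since the total volume is exactly \(n\) and the distance density is \(\Th{1/n}\) in \(s\) on its whole range. Beyond that, the key quantitative input is \(\alpha>1\), which ensures that the tail of the integral is dominated by its contribution near \(s\approx W\).
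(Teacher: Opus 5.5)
Your proof is correct: integrating the connection probability over the distance (for the $\infty$-norm torus, $\|x_u-x_v\|^d$ is in fact exactly uniform on $[0,n/2^d]$), splitting at $\|x_u-x_v\|^d\approx w_uw_v$, and using $\alpha>1$ to bound the tail by $\Oh{w_uw_v}$ is the standard argument, and your threshold-case bounds are fine. The paper gives no proof of its own and defers to \cite{BKL19,BKL25}; as far as I recall, the result is established there by essentially this same integration-over-distance computation.
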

Fixing one of \(x_u\) or \(x_v\) (but not both) does not change the marginal probability by more than a constant factor. Since the vertices form a Poisson point process, this immediately implies:

\begin{lemma}\label{lem:deg_poisson}
Let \(v \in V\) be a fixed vertex with weight \(w_v\). Then the random variable \(\deg(v)\) follows a Poisson distribution with expectation \(\E{\deg(v)} = \Th{w_v}\).
\end{lemma}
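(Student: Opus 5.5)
The approach is to view the neighbours of \(v\) as a thinning of the Poisson point process of the other vertices, and to read off the degree distribution from the mapping and marking theorems for Poisson processes. Fix \(v\) at position \(x_v\) with weight \(w_v\). By the Slivnyak--Mecke property, conditioned on the presence of a vertex at \(x_v\), the remaining vertices \(V\setminus\{v\}\) still form a Poisson point process on \(\mathcal{X}\) with intensity \(1\). Attach to each such point \(u\) the independent mark \(w_u\), drawn from density \(f\). This produces a Poisson process on \(\mathcal{X}\times[1,\infty)\) with intensity \(dx\, f(w)\,dw\).

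Next, for each point \(u\) we independently keep it with probability \(p_{uv}\). Given \((x_u,w_u)\) and \((x_v,w_v)\), the edge indicators are independent Bernoulli variables. Since \(p_{uv}\) depends only on \(w_u\) and \(\norm{x_u-x_v}\), which are determined by the mark and position of \(u\) once \(v\) is fixed, this is an independent position- and mark-dependent thinning. By the thinning (colouring) theorem, the retained points again form a Poisson process, now with intensity \(p_{uv}(x,w)\,dx\,f(w)\,dw\). Consequently \(\deg(v)\), the total number of retained points, is Poisson with mean
\[
\mu = \int_{1}^{\infty}\int_{\mathcal{X}} p(x,w)\,dx\,f(w)\,dw .
\]

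It remains to show \(\mu=\Th{w_v}\). Integrating over \(x\) first, the inner integral equals \(n\) times the edge probability with \(x_u\) uniform and \(x_v\) fixed. By Lemma~\ref{lem:edge_marginal}, together with the remark that fixing one position changes the marginal by at most a constant factor, this is \(\Th{\min\{w w_v,\,n\}}\). Thus
\[
\mu=\Th*{\int_1^\infty \min\{w w_v, n\}\, w^{-\tau}\,dw}.
\]
For \(w\le n/w_v\) the integrand is \(w_v w^{1-\tau}\), and since \(\tau>2\) this part contributes \(\Th{w_v}\), dominated near \(w=1\). This needs \(w_v\le n\) so that the range is nonempty; for \(w_v\) of larger order the statement should be read with the cap. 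The tail \(w>n/w_v\) contributes \(n\,(n/w_v)^{1-\tau}=\Oh{w_v (w_v/n)^{\tau-2}}\), which is \(\Oh{w_v}\). Hence \(\mu=\Th{w_v}\).

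The step I expect to be the main obstacle is justifying the Poisson structure rigorously. One point is that the edge indicators are independent only conditionally on positions and weights. The other is the torus boundary and the \(\infty\)-norm in the \(x\)-integral, which however is exactly what Lemma~\ref{lem:edge_marginal} already handles. Everything else is routine integration.
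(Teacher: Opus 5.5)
Your proposal is correct and is essentially the argument the paper has in mind: the paper derives this lemma in one line from the Poisson point process structure of the vertex set together with \Cref{lem:edge_marginal} with one endpoint fixed, and your marking/thinning argument plus the evaluation \(\int_1^\infty \min\{w w_v, n\}\, w^{-\tau}\, dw = \Th{w_v}\) for \(w_v \le n\) just make that line explicit. One cosmetic point: the lower bound is cleanest via \(\min\{w w_v, n\} \ge w_v\) for all \(w \ge 1\) (when \(w_v \le n\)), because the range \(w \le n/w_v\) alone does not give \(\Om{w_v}\) when \(w_v\) is close to \(n\); in that regime the tail term is what makes up the difference.
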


The next lemma from \cite{BKL25} shows that GIRGs contain a unique giant component of linear size, and that the average distance inside it is \(\Th*{\log \log n}\).
This will later be seen to match the number of steps taken by greedy and geometric routing up to a \(1+\oh{1}\) stretch.
\begin{lemma}\label{lem:average_dist}
A.a.s. the largest component has linear size, while all other components have size \(\log^{\Oh{1}} n\).
And a.a.s. the average distance in the largest component is 
\(\frac{2\pm \oh{1}}{\abs{\log(\tau-2)}}\,\log\log n \).
\end{lemma}

\subsection{Greedy Routing and Geometric Routing}\label{subsec:routing_protocols}
Both geometric and greedy routing protocols in GIRGs follow a local greedy strategy: the current vertex \(v\) forwards the packet to a neighbor \(u \in N(v)\) that optimizes a specific criterion, stopping if no neighbor improves the objective (failure) or if target \(t\) is reached (success).

\emph{Greedy routing protocol}: Every vertex \(v\) knows its own address \((x_v,w_v)\), those of its neighbors, and \(t\)'s address is enclosed with the packet. In each hop, the protocol forwards the packet to a neighbor maximizing the connection probability to \(t\), which is equivalent to maximizing the objective function \(\phi(u) \coloneqq \frac{w_u}{\norm{x_u - x_t}^d}\) for \(u \in N(v)\).

In their paper, Bringmann et al.~\cite{BKLMM22} established the following theorems:
\begin{theorem*}[Success Probability]
Greedy routing succeeds with probability \(\Om{1}\).
\end{theorem*}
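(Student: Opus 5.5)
We plan to prove that greedy routing from a source \(s\) to a target \(t\), both chosen at random, succeeds with probability \(\Om{1}\). The idea is to exhibit, with constant probability, an increasing chain of objective values \(\phi(v_0)<\phi(v_1)<\dots\) that ends at \(t\). We then show that greedy routing, which always moves to the neighbor of maximal \(\phi\), can only do better than this chain.

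\textbf{Setup.} First condition on a good start. With constant probability, \(s\) and \(t\) are at distance \(\Th{n^{1/d}}\), \(s\) has weight at least some large constant \(w_0\), and \(t\) has weight at least \(w_0\). Each of these holds with constant probability by the power-law density and the Poisson process. We use the potential \(\phi(v)=w_v/\norm{x_v-x_t}^d\). Routing succeeds once \(\phi(v)\ge c/w_t\) for a suitable constant \(c\), since then \(p_{vt}=\Th{1}\).

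\textbf{Phase 1: ascent to the heavy core.} Let \(v\) be the current vertex, with weight \(w\) and far from \(t\). Consider the neighbors \(u\) of \(v\) with weight about \(w^{1/(\tau-2)-\varepsilon}\) lying in the ball of radius \((w w_u)^{1/d}\) around \(x_v\) intersected with the half-space facing \(t\). The expected number of such neighbors is \(\Th{w\cdot w_u^{2-\tau}}=w^{\Omega(\varepsilon)}\). So with probability \(1-\exp(-w^{\Omega(\varepsilon)})\), some neighbor has \(\phi\) larger by a factor \(w^{\Omega(1)}\). Greedy routing then moves to a vertex \(v'\) with \(\phi(v')\ge\phi(u)\).

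The weight of \(v'\) itself need not grow, because greedy routing may instead pick a light vertex that is close to \(t\). We therefore track \(\phi\) directly. Let \(r=\norm{x_v-x_t}\). We show that \(\phi\) grows doubly exponentially, \(\phi_{i+1}\ge (\phi_i r^d)^{1/(\tau-2)-\varepsilon}/r^d\) roughly, up to polynomial corrections. To do this, we bound by a union bound the probability that some neighbor of \(v\) has \(\phi\) between the current value and the guaranteed value while having a bad configuration.

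The failure probabilities form a summable series \(\sum_i \exp(-w_i^{\Omega(1)})\), which is a small constant once \(w_0\) is large. This phase lasts until \(\phi\approx n^{-1/2}\), that is, until we reach vertices of weight about \(\sqrt n\) in the relevant region.

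\textbf{Phase 2: descent to the target.} Now \(\phi(v)=w/r^d\) is large. We look for neighbors \(u\) with \(\norm{x_u-x_t}\) much smaller and \(w_u\) chosen so that \(w_u/\norm{x_u-x_t}^d\ge \phi(v)^{1/(\tau-2)-\varepsilon}\). By symmetry this is the reverse of Phase 1 around \(t\): \(1/\phi\) shrinks doubly exponentially. The expected number of witnesses is again \((\phi\cdot\text{scale})^{\Omega(\varepsilon)}\). The failure probabilities again sum to a small constant, because near the end they are controlled by \(w_t\ge w_0\). Phase 2 stops when \(\phi(v)\ge c/w_t\), and \(t\) is then a neighbor with constant probability.

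\textbf{Main obstacle.} Greedy routing picks the \(\phi\)-maximizer, not our witness, so the process is adaptive. The edges it explores are correlated with the history, and the exposed vertex sets may overlap. We will try to handle this by revealing, at each step, only the edges from the current vertex to regions not yet exposed, and by using the monotonicity of \(\phi\) along the path. Monotonicity means each new step only needs a fresh witness in a region disjoint from all earlier explorations, and we can guarantee this disjointness because the radii shrink geometrically. We would also use the Poisson independence of disjoint regions. The remaining conditioning on non-edges only decreases the probabilities by \(1-o(1)\) factors, which we expect to absorb into constants.
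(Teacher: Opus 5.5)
Your two-phase outline matches the heuristic behind the cited result of \cite{BKLMM22}. However, the step that would turn it into a proof is missing, and your justification for it is incorrect. Greedy routing inspects \emph{all} neighbors, so the move \(v_k\to v_{k+1}\) reveals that \(v_k\) has no neighbor anywhere in \(\{u:\phi(u)>\phi(v_{k+1})\}\). Since \(\phi\) increases along the path, every later witness lies in exactly this set. So witness regions are never disjoint from earlier explorations. Moreover, in Phase~1 the search radii \((w\,w_u)^{1/d}\) grow doubly exponentially, so the search balls are essentially nested rather than shrinking. That this conditioning costs only a \(1-\oh{1}\) factor is precisely what must be proved. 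You would need an exploration in which the unrevealed vertices form a Poisson process with intensity thinned by \(\prod_k(1-p_{v_k x})\), together with a check that \(p_{v_k u}\) is small for every witness candidate \(u\). The proof of the cited theorem, which this paper adapts for geometric routing, avoids this with the layer technique. One analyzes the first vertex of the path induced by the current layer and all earlier layers. Its good neighbors then lie in unexposed later layers (fresh Poisson randomness), while its bad neighbors lie in exposed layers, where \Cref{lem:correlation} shows that the conditioning only lowers their probability. A union bound over layers then concludes.

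The second missing ingredient is control of the vertex that greedy actually picks. A lower bound on \(\phi\) alone does not close your induction, because in Phase~1 the guarantee for the next step depends on the \emph{weight} of the current vertex. You need a first-moment bound showing that no neighbor beats the witness while having weight below \(w_v^{\gamma(\zeta\eps)}\). The bad event is ``\(\phi\) above the witness's value and light'', not ``\(\phi\) between the current and the guaranteed value''. For \(\alpha<\infty\), weak ties make this only polynomially small, \(\Oh{w_v^{-\Om{\eps}}}\). The exponent has the right sign only if the bad-weight threshold carries a larger slack \(\zeta\) depending on \(\alpha,\tau\) (cf.\ \(\zeta_1\) in \Cref{1stphase}). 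So your failure series \(\sum_i\exp(-w_i^{\Om{1}})\) is wrong, although \(\sum_i w_i^{-\Om{\eps}}=\Oh{w_0^{-\Om{\eps}}}\) would still suffice.

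The quantitative picture also needs fixing. Phase~1 ends at \(\phi(v)\approx w_v^{-1/(\tau-2)}\), i.e.\ \(w_v\approx D_v^{d(\tau-2)/(\tau-1)}\), not at weight \(\sqrt n\); beyond this point the search radius exceeds \(D_v\) and your Phase~1 estimate breaks down. In Phase~2 one has \(\phi<1\), so the recursion is \(\phi\mapsto\phi^{\tau-2+\Oh{\eps}}\), whereas your \(\phi^{1/(\tau-2)-\eps}\) would be a decrease. Finally, conditioning on \(w_s,w_t\ge w_0\) only yields the version averaged over random \(s,t\). For fixed, possibly light, \(s,t\) you need start and end phases as in the proof of \Cref{thm:main}.
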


\begin{theorem*}[Greedy Path Length]\label{thm:greedy_length}
Conditioned on success, greedy routing finds a path of length at most \(\frac{2+\oh{1}}{\abs{\log(\tau-2)}} \log \log n\) asymptotically almost surely.
\end{theorem*}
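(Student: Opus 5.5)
The statement is the result of Bringmann et al.~\cite{BKLMM22}: conditioned on success, greedy routing takes at most \(\frac{2+\oh{1}}{\abs{\log(\tau-2)}}\log\log n\) steps a.a.s. I would prove it by tracking the objective \(\phi(v)=w_v/\norm{x_v-x_t}^d\) along the greedy trajectory \(s=v_0,v_1,\dots\) and splitting the run into two phases. Since \(\phi\) increases at every step, the path is self-avoiding. The neighbours of \(v_i\) are therefore fresh Poisson points, except for a negligible history-dependent exposure, so one-step estimates can be chained. Phase one runs while \(\phi(v_i)\le 1/n\), which is roughly the regime where \(w_{v_i}\le\sqrt{n}\). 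Phase two takes over once \(\phi\) passes that scale and continues until \(\phi(v_i)\ge c\), at which point \(t\) is a neighbour with constant probability and the final hop is a single step.

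\textbf{Phase one.} Fix \(v\) with weight \(w\) at distance \(r\) from \(t\). Using Lemma~\ref{lem:edge_marginal} together with the Poisson structure, I would compute the expected number of neighbours \(u\) with \(w_u\ge w^{(1-\varepsilon)/(\tau-2)}\) lying at distance at most \(r\) from \(t\). For the relevant range of \(r\) this count is \(w^{\Omega(\varepsilon)}\). Such a neighbour has \(\phi(u)\ge\phi(v)\,w^{(1-\varepsilon)/(\tau-2)-1}\), so the best neighbour does at least as well. This gives the recursion \(\log\phi(v_{i+1})\ge\frac{1-\varepsilon}{\tau-2}\log w_{v_i}-d\log r\). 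To close it, I also need an upper bound on how much \(\phi\) can improve in one step: a union bound shows that no neighbour has weight much above \(w^{(1+\varepsilon)/(\tau-2)}\). Iterating, the weight grows as \(w_{v_i}\approx w_{v_0}^{(\tau-2)^{-i(1\pm\varepsilon)}}\). Reaching weight \(\sqrt n\) then takes \(\frac{1+\oh{1}}{\abs{\log(\tau-2)}}\log\log n\) steps.

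\textbf{Phase two.} This phase is the symmetric argument run backwards from \(t\). Vertices with \(\phi(u)\approx y\) at distance \(\approx r\) from \(t\) exist with weight \(y r^d\). A vertex \(v\) with \(\phi(v)=x\ge 1/n\) connects to such a \(u\) with probability \(\min\{1,w_vw_u/\norm{x_v-x_u}^d\}\). Computing the expected number of such neighbours shows that \(\phi\) can jump to \(x^{\tau-2-\varepsilon}\) in a suitable normalisation, namely \(\log(1/\phi)\) shrinks by a factor \(\tau-2\) per step. Hence another \(\frac{1+\oh{1}}{\abs{\log(\tau-2)}}\log\log n\) steps bring \(\phi\) to a constant, and the two phases add up to the stated constant \(2\).

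\textbf{Main obstacle.} The difficulty is the conditioning. We must show that the event ``success'' does not bias us toward long paths. Concretely, the failure probability of each step in the good regime, roughly \(\exp(-w^{\Omega(\varepsilon)})\), has to be summable. If \(\phi(v_0)\) starts small, I would argue that either the process fails within \(\Oh{1}\) steps or it reaches weight \(\omega(1)\), after which the failure probability is \(\oh{1}\) overall. So conditioning on success changes probabilities by only a constant factor, and the high-probability step bound survives the conditioning. A second technical point is the dependency on already revealed edges. I would handle it by noting that the revealed information only concerns vertices with smaller \(\phi\), whose Poisson points are exposed in regions disjoint from those needed for the next step, up to an \(\oh{1}\) correction. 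A final point is the threshold case \eqref{eq:EP2}, which needs only the lower bounds I use, together with the zero-probability region for the upper bounds.
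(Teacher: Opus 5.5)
Your two-phase picture matches the mechanism the paper attributes to \cite{BKLMM22}, but the step that actually carries the proof is missing. You claim that the information revealed along the path concerns only vertices of smaller $\phi$ and lies in regions disjoint from those needed next. This is false. Choosing $v_{k+1}$ as the best neighbour of $v_k$ reveals that $v_k$ has \emph{no} neighbour in $\{u:\phi(u)>\phi(v_{k+1})\}$, and this set contains every candidate for every later step, so the regions are nested, not disjoint. By Poisson thinning, the candidates of $v_i$ then have intensity multiplied by $\prod_{k<i}(1-p_{v_ku})$. This factor is $0$ wherever an earlier $v_k$ connects with probability $1$ (the model allows $p_{uv}=1$ at short range). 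It damages exactly the lower bound on good neighbours that drives both of your recursions. Absence of edges only helps upper bounds, which is all that \Cref{lem:correlation} provides. Self-avoidance does not help, and the $\oh{1}$ correction you invoke is precisely what has to be proved. Bounding the cumulative thinning already presupposes that the earlier trajectory was typical, so it would need a careful induction that you have not set up. This is the obstruction the paper singles out in \Cref{subsec:proof_sketch}. The layer technique of \cite{BKLMM22} bypasses it in four steps. First, split the vertices into layers with doubly exponentially spaced thresholds. Second, consider the greedy path $P_{i,j}$ induced by the union $B_{i,j}$ of the first layers; its first vertex in $A_{i,j}$ is determined by $G[B_{i,j}]$ alone, so that vertex's neighbours outside $B_{i,j}$ are genuinely unexposed, which gives a clean good-neighbour bound. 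Third, control bad neighbours inside $B_{i,j}$ via \Cref{lem:correlation}. Fourth, conclude that the true path visits each layer at most once, so counting layers bounds the length.

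There is also a concrete error in where you switch phases. In this paper's normalization a typical source already has $\phi(s)=\Theta(1/n)$, so your threshold $\phi\le 1/n$ is not the regime $w\le\sqrt n$, and no fixed threshold in $\phi$ alone marks the transition. Your phase-one count $w^{\Omega(\eps)}$ needs the strong-tie ball of volume about $w^{(\tau-1-\eps)/(\tau-2)}$ around $v$ to fit inside $B(x_t,D_v)$. So it holds only while $w_v\lesssim D_v^{d(\tau-2)/(\tau-1)}\le n^{(\tau-2)/(\tau-1)}<\sqrt n$; beyond that the path turns towards $t$ and need not reach weight $\sqrt n$ at all. Conversely, your phase-two jump $x\mapsto x^{\tau-2}$ is realised by neighbours at distance $r$ from $t$ with $r^d\approx x^{-(\tau-1)}$. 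The condition $r\lesssim D_v$ holds exactly when $w_v\gtrsim D_v^{d(\tau-2)/(\tau-1)}$, i.e.\ $\phi(v)\gtrsim w_v^{-1/(\tau-2)}$; it does not follow from $\phi(v)\ge 1/n$. If it did, a typical source would reach $t$ in $(1+\oh{1})\log\log n/\abs{\log(\tau-2)}$ greedy steps, which is half the typical graph distance of \Cref{lem:average_dist}. The phases must therefore be defined through the relation between $\phi(v)$ and $w_v$, as in \Cref{subsec:typical_greedy}. You must also show that the chosen neighbour, of weight about $1/\phi(v)$, again satisfies $\phi\gtrsim w^{-1/(\tau-2)}$, so that the second phase sustains itself. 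With that correction the two step counts still add up to the constant $2$.
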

Comparing the length of the greedy path and the average distance in \Cref{lem:average_dist}, we get

\begin{theorem*}[Stretch]
A.a.s. the stretch of a successful greedy path is \(1+\oh{1}\).
\end{theorem*}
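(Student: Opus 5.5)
The Stretch theorem should follow by combining the Greedy Path Length theorem with \Cref{lem:average_dist}; the only real work is matching the upper bound on the greedy path to a lower bound on the graph distance between the endpoints.

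\textbf{Upper bound on the path.} Let \(s,t\) be the routing endpoints. If greedy routing succeeds, \(s\) and \(t\) are in the same component. By the Greedy Path Length theorem, conditioned on success, the greedy path \(P\) satisfies, a.a.s.,
\[
|P|\le \frac{2+\oh{1}}{\abs{\log(\tau-2)}}\log\log n .
\]

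\textbf{Lower bound on the distance.} I need \(d_G(s,t)\ge \frac{2-\oh{1}}{\abs{\log(\tau-2)}}\log\log n\) a.a.s., conditioned on success. First, success forces \(s,t\) into the giant component, up to an event of vanishing conditional probability. Success has probability \(\Om{1}\), and by \(\Cref{lem:average_dist}\) small components have size \(\log^{\Oh{1}}n\). So the probability that a random pair shares a small component is \(\oh{1}\), and conditioning on an event of probability \(\Om{1}\) inflates this by only a constant factor.

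Next, \Cref{lem:average_dist} controls only the \emph{average} distance in the giant, whereas I need a statement about typical pairs. I would use the standard strengthening from \cite{BKL25}: for all but an \(\oh{1}\) fraction of pairs in the giant, the distance is \(\frac{2\pm\oh{1}}{\abs{\log(\tau-2)}}\log\log n\). The lower bound for typical pairs comes from the usual first-moment path-counting argument: a path of length at most \((2-\varepsilon)\log\log n/\abs{\log(\tau-2)}\) between two typical (constant-weight) vertices is unlikely. Since \(s,t\) are uniformly random, the fraction of bad pairs is \(\oh{1}\), so the lower bound holds a.a.s. unconditionally. Dividing by \(\P{\text{success}}=\Om{1}\), it still holds with probability \(1-\oh{1}\) after conditioning on success.

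\textbf{Combining.} Intersecting the two a.a.s. events gives
\[
\frac{|P|}{d_G(s,t)}\le \frac{2+\oh{1}}{2-\oh{1}}=1+\oh{1}.
\]
Also \(|P|\ge d_G(s,t)\) trivially, so the stretch is \(1+\oh{1}\).

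\textbf{Main obstacle.} The hardest step is passing from the average-distance statement in \Cref{lem:average_dist} to a concentration statement for a random pair, together with the conditioning on success. If only the average is available, it gives no pointwise lower bound. In that case I would rerun the path-counting lower-bound argument directly for random \(s,t\) and show it fails with probability \(\oh{1}\); this \(\oh{1}\) bound is what survives conditioning on an \(\Om{1}\)-probability event.
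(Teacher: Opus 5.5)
Your proposal is correct and takes essentially the paper's approach: the paper justifies the statement by comparing the Greedy Path Length theorem with the distance bound of \Cref{lem:average_dist} from \cite{BKL25}. Your extra care is warranted, because an average-distance statement alone does not lower-bound \(d_G(s,t)\) for the specific random pair; replacing it with the typical-pair concentration from \cite{BKL25}, and transferring that through the \(\Om{1}\)-probability success event via \(\Prob{\,\cdot \mid \text{success}} \le \Prob{\,\cdot\,}/\Prob{\text{success}}\), supplies exactly what the paper's one-line comparison leaves implicit.
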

A success probability of \(\Th{1}\) and stretch of \(1+\oh{1}\) are asymptotically best possible. In GIRGs, success is inherently limited by the model's probabilistic nature: even for vertices at infinitesimal distance, the connection probability remains \(\Th{1}\). Thus, any local routing may fail at the final hop if the current vertex is not adjacent to \(t\), despite being arbitrarily close.

The greedy routing protocol depends on both the vertex weight \(w_u\) and geometric distance \(\norm{x_u - x_t}\), which poses practical restrictions. In this paper, we study the \emph{geometric routing protocol} in GIRGs, which ignores vertex weights and relies solely on geometric positions.

\emph{Geometric routing protocol:}  Every vertex \(v\) knows its own position \(x_v\), its neighbors' positions, and \(x_t\) is enclosed with the packet. In each hop, the protocol forwards the packet to a neighbor \(u\) closest to \(x_t\).


We note that the target vertex \(t\) globally maximizes \(\phi\) and minimizes the distance to \(x_t\), providing a natural terminus for both greedy and geometric routing. The sequence of vertices obtained by applying geometric (resp.\ greedy) routing from source \(s\) to target \(t\) is called the \emph{geometric path} (resp.\ \emph{greedy path}).

\section{Results}
\subsection{Geometric routing in GIRGs}
Geometric routing operates on a simpler heuristic than greedy routing, yet its performance in GIRGs requires further analysis. In GIRGs, ultra-short paths of order \(\Th{\log\log n}\) typically arise from traversing a "heavy core" of high-weight vertices that act as network hubs~\cite{BKL25}. Without knowledge of weights, it is a priori unclear whether the protocol can effectively navigate through these hubs, a process essential for efficient routing. We show that the implicit correlation between geometry and connectivity is sufficient for finding these shortcuts. Under suitable parameters, geometric routing achieves the same performance as greedy routing, succeeding with constant probability and maintaining \(\Theta(\log\log n)\) path lengths.

\subsubsection*{The regime \(\alpha>\tau-1\):}
The success and efficiency of geometric routing are parameter-dependent, determined by the interplay between the network's geometry and its scale-free structure. Performance is governed by two key factors: (i)~the power-law exponent \(\tau\) for the weight distribution; and (ii)~the parameter \(\alpha\), which
controls how sharply connection probabilities decay geometrically.

Intuitively, \(\tau\) dictates the prevalence of extremely heavy vertices: smaller \(\tau\) corresponds to a heavier tail, while larger \(\tau\) yields a more homogeneous weight distribution. The parameter \(\alpha\) controls the influence of geometry on edge formation. In particular, it determines the fraction of ``weak ties''~\cite{G73} (long-range edges between low-weight nodes). Larger \(\alpha\) makes weak ties rare (the threshold case \(\alpha = \infty\) forbids them completely), resulting in highly clustered networks where long-range neighbors are predominantly heavy. Conversely, smaller  \(\alpha\) weakens geometric constraints, resulting in a more random network structure. 
 
For geometric routing to be efficient, neighbors selected for their proximity to \(t\) should be sufficiently heavy (i.e., satisfy a weight lower bound). This property is captured by the condition \(\alpha > \tau-1\). If \(\alpha < \tau - 1\), the most distant neighbors may be weak ties of low weight with non-negligible probability, which impedes progress toward the heavy core and causes routing to stall in light vertices. Henceforth, we assume \( \alpha>\tau-1\) unless specified otherwise.

\subsubsection*{Main results:}
Let \(G\) be a GIRG with power-law weight exponent \(2 < \tau < 3\) and decay parameter \(\alpha > \tau - 1\). Typically, source \(s\) and target \(t\) are sampled uniformly at random, but all results also hold if we condition on specific values for \(x_s , x_t , w_s , w_t\). Thus, we assume \(s\) and \(t\) are fixed throughout the analysis, while the remaining vertices and all edges are drawn randomly.
Our main result shows that geometric routing from \(s\) to \(t\) in \(G\) succeeds with constant probability achieves \(\Theta(\log \log n)\) path lengths. The main proof ideas are given in \Cref{subsec:proof_sketch}; the full
formal proofs are deferred to the full version.

\begin{restatable}{theorem}{MainTheorem}\label{thm:main}
Geometric routing succeeds with probability \(\Om{1}\).
\end{restatable}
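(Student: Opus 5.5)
We prove \Cref{thm:main} by following the two-phase structure announced in the introduction. We show that each phase succeeds with probability at least a constant, or with probability $1-o(1)$ once it is started from a vertex of large enough weight. The constant-probability loss is concentrated at the start (leaving $s$ and reaching moderately heavy vertices) and at the end (the final hop to $t$).

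Throughout, write $r_v := \norm{x_v - x_t}$ for the distance of the current vertex to the target. Geometric routing moves from $v$ to its neighbour $u$ with $r_u < r_v$ minimal. We use the Poisson structure to expose the graph only locally. At each step we reveal the neighbours of the current vertex $v$ lying in the ball $B(x_t, r_v)$. These regions are nested, so the exposure can be organised so that independence is essentially preserved. The remaining dependence is the conditioning that earlier vertices had no neighbour closer to $t$; it only lowers the density of vertices in the ball, and we handle it by monotonicity.

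\emph{Phase 1 (ascent).} Let $v$ have weight $w$ and $r_v \ge n^{\Omega(1)}$ much larger than any relevant connection radius. We show that with probability $1-O(w^{-\varepsilon})$ the next vertex $u$ satisfies $w_u \ge w^{(1-\delta)/(\tau-2)}$. Because the ball around $t$ is huge, ``closest to $t$'' essentially means the neighbour reaching furthest in the direction of $t$, that is, the neighbour maximising a displacement of size $\approx \norm{x_u - x_v}$ along that direction. The number of neighbours at distance about $\rho$ with weight about $w'$ is
\[
\approx \rho^{d}\,(w')^{1-\tau}\,\min\{1,(ww'/\rho^d)^\alpha\}.
\]
Heavy vertices with $w' \approx \rho^d/w$ contribute $\rho^{d}(\rho^d/w)^{1-\tau}$ neighbours at distance about $\rho$. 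Weak ties from weight-$O(1)$ vertices contribute $\rho^{d}(w/\rho^d)^{\alpha}$. Since $\alpha > \tau-1$, the heavy contribution dominates at the largest scale $\rho$ where any neighbour exists. That scale satisfies $\rho^d \approx w^{(\tau-1)/(\tau-2)}$, which gives $w' \approx w^{1/(\tau-2)}$, and a light farthest neighbour occurs only with polynomially small probability in $w$. This comparison is exactly where the hypothesis $\alpha>\tau-1$ enters, and it is the main technical obstacle. The difficulty is that we must control the event that the closest-to-$t$ neighbour is light, uniformly in the direction and up to $(1\pm\delta)$ exponents, and not merely in expectation.

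Summing the failure probabilities $w_i^{-\varepsilon}$ over the doubly exponentially increasing weights gives a convergent series. Hence, once the walk reaches a vertex of weight at least some large constant $w_0$, Phase 1 succeeds with probability $1-O(w_0^{-\varepsilon})$. Reaching weight $w_0$ from $s$ has probability $\Omega(1)$: for instance, $s$ has a neighbour of weight at least $w_0$ that is also its closest-to-$t$ neighbour with constant probability, by \Cref{lem:deg_poisson} and a direct computation. Phase 1 ends when $w_v^{1/d}$ becomes comparable to $r_v$, i.e. $w_v \approx r_v^{d}$ up to $n^{o(1)}$ factors; this always happens since $r_v \le n^{1/d}$.

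\emph{Preparation step and Phase 2 (descent).} At the transition we perform one preparation step. Using a vertex of weight $w \gtrsim r_v^{d(1-\delta)}$, we show that with probability $1-o(1)$ the walk reaches a vertex $u$ with $r_u \le r_v^{1-\Omega(1)}$ and still large weight $w_u \approx r_u^{d}$ up to lower-order factors. From there, Phase 2 is the invariant $r_{i+1}^d \le (r_i^d)^{\tau-2+\delta}$. The reason is that a vertex $v$ with $w_v \approx r_v^d$ connects with constant probability to each vertex $u$ of weight $w'$ within distance $(w_v w')^{1/d}$ of itself. The ball around $t$ of radius $\rho$ contains a vertex of weight at least $\rho^d/w$ with high probability once $\rho^{d}(\rho^d/w)^{1-\tau} \gg 1$, and such a vertex is adjacent to $v$ with constant probability. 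The closest neighbour is then at distance at most such a $\rho$. It is also heavy, by the same light-versus-heavy comparison as in Phase 1, which uses $\alpha>\tau-1$ again. Failure probabilities are polynomial in $r_i$, so they are summable.

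Finally, when $r_i = O(1)$, the current vertex is adjacent to $t$ with probability $\Theta(1)$ by \eqref{eq:EP1}/\eqref{eq:EP2}, in which case the routing forwards to $t$, since $t$ is the closest possible vertex. Multiplying the constant success probabilities of the start and end with the $1-o(1)$ bounds for the middle phases gives success probability $\Omega(1)$.
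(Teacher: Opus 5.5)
Your start phase and your Phase 1 step are in line with the paper's start-phase argument and \Cref{1stphase}. In Phase 1 you argue that the closest-to-$t$ neighbour is heavy because, for $\alpha>\tau-1$, light far neighbours are rare. Everything after Phase 1, however, rests on a wrong weight--distance scaling.

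A vertex of weight $w$ reaches distance about $w^{(\tau-1)/(d(\tau-2))}$. So the premise of your Phase 1 ($r_v$ much larger than the reach) already fails at $w_v\approx r_v^{d(\tau-2)/(\tau-1)}$. That is far below your transition point $w_v\approx r_v^{d}$, and that point is in fact never reached. The heaviest vertex in $B(x_t,r)$ has weight about $r^{d/(\tau-1)}$ (\Cref{maxweight}). Moreover, by \Cref{heavytail}, with probability $1-D_0^{-\Om{\eps}}$ every vertex with $D_v\ge D_0$ has $w_v=\Oh{D_v^{d(1-\eps)}}$. Three consequences follow:
\begin{itemize}
\item your preparation step, which starts from $w\gtrsim r_v^{d(1-\delta)}$, is never triggered;
\item your Phase 2 invariant $w\approx r^{d}$ never holds;
\item the regime the walk actually occupies after Phase 1, namely $r^{d(\tau-2)/(\tau-1)}\lesssim w\lesssim r^{d/(\tau-1)}$, is covered by neither of your phases.
\end{itemize}

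Your Phase 2 computation is also off. A candidate $u\in B(x_t,\rho)$ lies at distance about $r_v$ from $v$, not $\rho$. So the weight needed to connect is $r_v^d/w_v$, not $\rho^d/w_v$.

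The missing idea is the paper's minimal-annulus analysis. Write $w_v=\Th{D_v^{d\eta}}$. Then the closest neighbour sits at distance about $D_v^{(1-\eta)(\tau-1)}$ from $t$:
\begin{itemize}
\item \Cref{2_inner} rules out neighbours much closer;
\item \Cref{2_annulus} provides a heavy neighbour in the annulus and excludes light ones there, again using $\alpha>\tau-1$.
\end{itemize}
This neighbour $u$ has weight about $D_u^{d/(\tau-1)}$. That single hop is the real preparation step. From then on the relation $w\approx r^{d/(\tau-1)}$ yields $r\mapsto r^{\tau-2}$.

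Relatedly, your end phase fails under \eqref{eq:EP2}. A vertex at distance about $D_0$ with weight about $D_0^{d/(\tau-1)}$ can have $p_{vt}=0$. This is why the paper routes through a neighbour $v'\in V_{<D_0}$ of large constant weight, using that $V_{<D_0}$ is still unexposed.

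Your treatment of dependencies is also not valid as stated. Condition on the earlier path vertices $v_k$ having no neighbour in $B(x_t,r_{v_i})$. This leaves there a Poisson process thinned by $\prod_{k<i}(1-p_{v_k u})$. Monotonicity therefore controls only the ``no light neighbour'' events. It works against the ``a heavy neighbour exists'' events. Worse, the thinning preferentially removes heavy vertices, which are exactly the ones you need.

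To repair this directly, you would have to show quantitatively that $\sum_{k<i}p_{v_k u}$ is small for the relevant candidates $u$. The paper sidesteps this with the layer technique. The path $P_{i,j}$ induced by $B_{i,j}$ depends only on vertices inside $B_{i,j}$. Good neighbours lie outside $B_{i,j}$ and are therefore unexposed. Only the bad-neighbour bound needs \Cref{lem:correlation}.
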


As with greedy routing, \(\Th{1}\) success reflects the model's inherent connectivity, being optimal in the general setting. Conditioned on success, geometric routing is efficient in traversing the main part of the routing process, as the following theorem shows.

\begin{theorem}\label{thm:length}
Let \(w_0, D_0\) be sufficiently large constants. The geometric routing protocol finds a path from a vertex of weight at least \(w_0\) to a vertex within distance \(D_0\) of \(t\), which constitutes the main part of the routing, whose length is upper-bounded by
\begin{equation}\label{eq:length}
\frac{1 + \oh{1}}{\abs{\log(\tau - 2)}} \left( \log \log_{w_0} (D_{s}^d) + \log \log_{D_0} (D_{s}) \right) + \Oh{f_0(n)},
\end{equation}
where \(D_s = \norm{x_s -x_t}\) and \(\om{1} \leq f_0(n) \leq o(\log\log n)\). As \( D_s \leq n^{1/d}\), the bound evaluates to \[\frac{2+\oh{1}}{\abs{\log(\tau-2)}}\log \log n.\]
\end{theorem}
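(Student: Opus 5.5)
We split the routing into the two phases described in the introduction and track, for the current vertex $v_i$, the pair $(w_{v_i}, D_i)$ with $D_i=\norm{x_{v_i}-x_t}$. Throughout we condition on the history only through the explored region: every vertex $u$ with $\norm{x_u-x_t}<D_i$ has not yet been revealed as a neighbour of $v_i$. This holds because geometric routing always moves to the closest neighbour, so all previously visited vertices are farther from $t$ than $v_i$. Hence the Poisson point process in the ball $B(x_t,D_i)$, together with its edges to $v_i$, is still fresh up to a conditioning that only \emph{removes} neighbours from the already-explored shells.

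\textbf{Phase 1 (ascent).} We start from a vertex of weight $w\ge w_0$ at distance $D\gg w^{1/d}$. Consider the neighbours of $v$ with weight in $[w^{(1-\varepsilon)/(\tau-2)}, w^{(1+\varepsilon)/(\tau-2)}]$ inside $B(x_v,(w\cdot w')^{1/d})$. By the Poisson property their expected number is $\gtrsim w^{\Omega(\varepsilon)}$. The one among them closest to $t$ lies at distance roughly $D-\Theta((w w')^{1/d})$.

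The key estimate uses $\alpha>\tau-1$. Fix a point at distance $r\ge (ww')^{1/d}$ from $x_v$. The expected number of neighbours of $v$ of weight $\le w^{(1-\varepsilon)/(\tau-2)}$ that are at least as close to $t$ as the best heavy neighbour is
\[
\int r^{d-1}\int u^{-\tau}\min\{(wu/r^d)^\alpha,1\}\,du\,dr .
\]
The condition $\alpha>\tau-1$ makes the inner integral dominated by large $u$, so light long-range neighbours ("weak ties") are dominated by heavy ones. A union bound then shows the next vertex has weight at least $w^{(1-\varepsilon)/(\tau-2)}$, with failure probability $w^{-\Omega(1)}$. These failure probabilities are summable along the doubly exponential sequence.

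An upper bound $w_{i+1}\le w_i^{(1+\varepsilon)/(\tau-2)}$ holds similarly, since heavier neighbours are too rare. So $\log w_i$ grows by a factor $(1\pm\varepsilon)/(\tau-2)$ per step. Phase 1 stops once $w_i\ge D_i^{d}$ up to the $\varepsilon$-slack, i.e.\ once $v_i$ can reach the vicinity of $t$. This takes $\frac{1+o(1)}{|\log(\tau-2)|}\log\log_{w_0}(D_s^d)$ steps. The distance changes in Phase 1 only by lower-order factors, since each step moves at most $(w_iw_{i+1})^{1/d}\ll D_i$ until the end.

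\textbf{Preparation step and Phase 2 (descent).} At the transition, $v$ is heavy, with $w_v\approx D^d$. We show that within $O(f_0(n))$ steps the protocol reaches a vertex $v$ at distance $D$ from $t$ with weight about $D^{d}$, and with the geometry in canonical form: its neighbourhood contains $B(x_t, D^{\tau-2+\varepsilon})$-type regions. In the descent regime, a vertex with $w_v\gtrsim D^d$ connects with constant probability to every vertex $u$ of weight $u\gtrsim D^d/w_v\cdot(\cdot)$ near $t$. The closest neighbour to $t$ is then at distance $D' \approx D^{\tau-2\pm\varepsilon}$: it is the nearest point to $x_t$ of the thinned Poisson process whose points have weight $\ge D^d/w_v$, whose density is $\sim (D^d/w_v)^{1-\tau}$.

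The next vertex has weight about $D^{d}/D'^{d}\cdot$ correction, which is again $\gtrsim D'^d$. So the invariant $w\gtrsim D^d$ propagates, and $\log D_i$ shrinks by factor $\tau-2\pm\varepsilon$. This continues until $D_i\le D_0$, giving $\frac{1+o(1)}{|\log(\tau-2)|}\log\log_{D_0}D_s$ steps. We also need that the chosen vertex is not too light: a light neighbour closer to $t$ must be excluded. This again follows from the $\alpha>\tau-1$ integral, now centred at $t$.

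\textbf{Assembly.} We take a union bound over the $O(\log\log n)$ steps, with per-step error $\exp(-\Omega(\min(w_i,D_i)^{\Omega(\varepsilon)}))$. Choosing $\varepsilon=\varepsilon(n)\to0$ slowly yields the $1+o(1)$ factors, and the slack steps are absorbed into $O(f_0(n))$.

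\textbf{Main obstacle.} The hardest part is the phase transition and preparation step. There the weight growth rate $1/(\tau-2)$ and the distance contraction interact: the neighbour closest to $t$ may be light but geometrically lucky, breaking the invariant. I would handle it by allowing $O(f_0(n))$ intermediate steps, each of which either increases weight or reduces distance by a polynomial factor, and controlling both via the same weak-tie integral.
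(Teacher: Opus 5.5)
Your step count and two-phase outline match the paper. However, the second phase and the transition rest on a wrong invariant, so the argument cannot be carried out as written.

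You end Phase~1 at $w_v\approx D_v^d$ and then propagate $w\gtrsim D^d$. By your own Phase~1 dynamics the reach of $v$ is $(w\cdot w^{1/(\tau-2)})^{1/d}=w^{(\tau-1)/(d(\tau-2))}$, which already equals $D_v$ when $w_v\approx D_v^{d(\tau-2)/(\tau-1)}$. This is where the paper puts the boundary ($V_2=\{v: D_v\le R_v(\eps_1)\}$). Beyond it your Phase~1 estimate fails: the neighbour closest to $t$ now lies near $t$ and is typically lighter than $w_v^{1/(\tau-2)}$.

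Moreover, $w\gtrsim D^d$ essentially never holds. A ball of radius $D'$ around $t$ typically contains no vertex much heavier than $D'^{d/(\tau-1)}$ (\Cref{maxweight}). By \Cref{heavytail}, with probability $1-D_0^{-\Omega(\eps)}$ no vertex with $D_v\ge D_0$ has $w_v$ above $\Theta(D_v^{d(1-\eps)})$.

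Your own density computation exposes the inconsistency. If $w_v\gtrsim D^d$, the threshold $D^d/w_v$ is $O(1)$, so the nearest admissible point is at constant distance, not at $D^{\tau-2}$. The decay $D'\approx D^{\tau-2}$ occurs exactly when $w_v\approx D^{d/(\tau-1)}$. The next vertex then has weight $\approx D^d/w_v=D'^{d/(\tau-1)}$, which is the self-reproducing relation the paper propagates (\Cref{cor:2nd_phase_decay}).

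The missing idea for the transition is that your thinned-process mechanism, run with a general ratio, reaches this relation after a \emph{single} step from any entry point. At the first vertex of $V_2$ one has $w_v=\Theta(D_v^{d\eta})$ with $\eta$ random in roughly $[\frac{\tau-2}{\tau-1},\frac{1}{\tau-1}]$. The chain of reasoning is:
\begin{enumerate}
\item Vertices near $t$ are adjacent to $v$ with constant probability once their weight is $\gtrsim D_v^{d(1-\eta)}$; lighter ones are rarely adjacent since $\alpha>\tau-1$.
\item The nearest such vertex is at distance $D_u\approx D_v^{(1-\eta)(\tau-1)}$.
\item Hence $w_u\gtrsim D_v^{d(1-\eta)}\approx D_u^{d/(\tau-1)}$, whatever $\eta$ is.
\end{enumerate}
Making this rigorous is an extreme-value argument around $t$ (\Cref{maxweight}). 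Take the smallest annulus around $t$ that contains a neighbour of $v$. Its inner disc has no vertex heavy enough to connect and no light weak tie (\Cref{2_inner}). The annulus itself has $D_v^{\Omega(\eps)}$ vertices of the required weight, each adjacent with constant probability (\Cref{2_annulus}). Your plan of $O(f_0(n))$ unspecified intermediate steps does not supply this, and with the invariant $w\gtrsim D^d$ it aims at a state that does not occur.

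Smaller points:
\begin{itemize}
\item Light-neighbour failure probabilities are only polynomial, $w^{-\Omega(\eps)}$ via Markov, not $\exp(-\cdot)$ as in your assembly. So letting $\eps(n)\to 0$ from the start makes the first error terms $w_0^{-\Omega(\eps)}$ tend to $1$. The paper keeps a constant $\eps_1$ on the first $f_0(n)$ layers and switches to $\eps_2=o(1)$ only beyond $w_0'$ and $D_0'$. That is where the $O(f_0(n))$ term comes from.
\item In Phase~1, the range guaranteed for the heavy neighbour must use a strictly smaller slack than the light-weight cutoff (the paper's $\zeta_1>\frac{\alpha-1}{\alpha+1-\tau}$ in \Cref{1stphase}). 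With the same $\eps$ in both, as you write them, the expected number of light neighbours at that range is of the same order $w^{\eps}$ as the heavy ones.
\item Your radial-exploration conditioning is a thinning of the process in $B(x_t,D_i)$. That is harmless for upper bounds (cf.\ \Cref{lem:correlation}), but your lower bounds on good neighbours need an argument that the thinning spares them. The layer technique avoids this because good neighbours lie in not-yet-uncovered later layers.
\end{itemize}
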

For randomly \(s\) and \(t\) (typically having constant weights and are geometrically far apart), both greedy and geometric routing achieve path length \(\frac{2+\oh{1}}{\abs{\log(\tau-2)}}\log \log n\) for the main part of the routing.
This demonstrates that navigation based solely on geometry is near-optimal for \(\alpha>\tau-1\), matching the efficiency of  greedy routing.

\subsection{Typical Trajectory}
We outline the geometric routing process and the intuition behind our analysis, highlighting key differences from greedy routing. 
Formal statements are provided in \Cref{subsec:proof_sketch}, along with the main proof ideas. Complete proofs are given in the appendices.

\subsubsection{Typical Trajectory of a Geometric Path}\label{subsec:typical_traj_geometric}
Under \(\alpha>\tau-1\), geometric proximity implicitly governs weight dynamics, guiding the path through a predictable ascent to the core and subsequent descent to \(t\). The expected trajectory of the routing process is such that, after a few steps from the start, the current weight becomes a sufficiently large constant \(w_0\). We also expect that, near the end of the routing, when the distance to the target becomes some constant, only a few additional hops are needed to reach the target.
We refer to the phase during which the routing process first reaches a vertex of sufficiently large constant weight \(w_0\) as the \emph{start phase}, and the phase during which the distance to the target are below a sufficiently large constant \(D_0\) as the \emph{end phase}. Our primary analysis focuses on the \emph{main part} of the routing, which occurs in between.
We will argue that, with high probability, the routing process does not fail and progresses efficiently toward the target during this main part. To this end, we leverage \cref{lem:deg_poisson}, which established \(\E{\deg(v)} = \Th{w_v}\) for any vertex \(v\), implying that the weights of its neighbors follows a power-law with exponent \(\tau-1\).

\paragraph*{First phase: implicit weight growth.}
For a low-weight vertex \(v\), a.a.s. all its neighbors are spatially local and light. Since the current vertex \(v\) is still far from \(t\), its neighbors are roughly equidistant to \(t\), up to factors \((1 + \oh{1})\). The protocol thus simply selects a neighbor \(u\) that: (i) lies in the direction of \(t\) from \(v\); and (ii) be away from \(v\) as far as possible.

For condition (i), any neighbor of \(v\) has a constant probability to lie in the ``same direction'' as \(t\) from \(v\), regardless of their weight. Condition (ii) encourages choosing a neighbor \(u\) with larger weight. Since \(\alpha> \tau-1\), weak ties are rare, most distant neighbors are ``strong neighbors'', having large enough weights to connect to \(v\) with probability \(\Om{1}\). We can estimate the expected number of strong neighbors at distance \(\approx r\), with weight \(\Omega(r^d/w_v)\), by
\[
\Th*{r^d \cdot \int_{r^d/w_v}^{\infty} w^{-\tau} dw} = \Th{r^{d(2-\tau)} w_v^{\tau-1}}.
\]
Let \(r_{\max}(v) \coloneqq w_v^{(\tau-1)/(d(\tau-2))}\) be the maximum \(r\) where \(\Om{1}\) strong neighbors are expected. Since a.a.s.\ there are no weak neighbors in this distance class of \(r_{\max}(v)\), the protocol typically selects a neighbor at distance \(\approx r_{\max}(v)\) from \(v\) with weight of order \(\Om{r_{\max}^d / w_v} = \Om{w_v^{1/(\tau-2)}}\). In other words, besides maximizing geometric progress toward \(t\), geometric routing exponentiates the current weight by  \(1/(\tau-2)\) at each step of the first phase. This phase ends when the process hits a vertex \(v\) heavy enough to satisfy \(r_{\max}(v) \approx D_v \coloneqq \norm{x_v - x_t}\). As a result, geometric routing reaches the heavy core within \(\frac{1+ \oh{1}}{\abs{\log (\tau-2)}} \log\log n\) steps.

\paragraph*{Second phase: rapid distance reduction.} 
Once the routing reaches a node \(v\) such that \(r_{\max}(v) \gtrsim D_v\), the relation between the current weight and the distance to \(t\) changes, since the length from \(v\) to its furthest neighbors can now exceed \(D_v\). The probability that \(v\) is directly adjacent to \(t\) remains low, but \(v\) has neighbors that are significantly closer to \(t\). Therefore, the routing protocol no longer hops simply to the furthest neighbors in the good direction. Instead, it seeks the neighbors of \(v\) in a small region around \(t\).

To capture the neighbors at distance roughly \(r\) from \(t\), consider an annulus \(\mathcal{A}(r)\) centered at \(t\) with outer radius \(r\) and inner radius \(r^{1-\Oh{\eps}}\). By the structure of GIRGs, both the distribution of the maximum-weight vertex in \(\mathcal{A}(r)\) and the probability that \(\mathcal{A}(r)\) contains a neighbor of \(v\) depend on the volume \(\Th{r^d}\) of \(\mathcal{A}(r)\). Let \(r_v\) be the minimal radius such that \(\mathcal{A}_v \coloneqq \mathcal{A}(r_v)\) contains a neighbor of \(v\). Suppose \(u\) is a neighbor of \(v\) closest to \(t\), then \(u \in \mathcal{A}_v\) and \(D_u =\Th{r_v}\) by construction. Moreover, \(w_u\) is, up to a constant factor, the maximum weight among vertices in \(\mathcal{A}_v\). Indeed, if \(w_{u}\) were not asymptotically maximal, vertices of such weight would be abundant enough to occur in a smaller annulus of radii \(r' < r_v\). Since the distance from \(v\) to these vertices in \(\mathcal{A}(r')\) is of the same order as the distance to \(u\), with high probability, \(v\) would be adjacent to at least one such vertex, contradicting the minimality of \(r_v\). Under the power-law distribution of weights, we thus obtain 
\[w_u \approx \Th*{r_v ^{d/(\tau-1)}} \approx \Th*{D_u^{d/(\tau-1)}}.\]
It is worth noting that this weight-distance relation need not hold for \(v\) itself, but is guaranteed for all subsequent nodes in the second phase. Specifically, for any node \(u\) in this phase (after the first hop), the routing process finds a neighbor in the annulus \(\mathcal{A}_u\) with radius \(r_u \approx D_u^{\tau-2}\).
In other words, throughout the second phase, except for the very first hop, the distance to \(t\) decreases by an exponent of \(\tau-2\) at each step. Consequently, in the second phase, geometric routing reaches a vertex at constant distance to \(t\) within \(\frac{1+ \oh{1}}{\abs{\log (\tau-2)}} \log\log n\) steps.

\subsubsection{Comparison with Trajectory of a Greedy Path}\label{subsec:typical_greedy}	
The typical trajectories of geometric and greedy routing both exhibit a two-phase structure and achieve the same asymptotic progress, yet the underlying mechanisms are different. In particular, the two phases of geometric routing are determined by the relation between
\[
r_{\max}(v) \coloneqq w_v^{(\tau-1)/(d(\tau-2))} \quad \text{and} \quad D_v \coloneqq \norm{x_v -x_t},
\]
while in greedy routing the two phases are characterized by the relation between (see also~\cite{BKLMM22})
\[
\phi(v) \coloneqq w_v/D_v^d \quad \text{and} \quad w_v^{-1/(\tau-2)}.
\]
Both descriptions encode the same underlying threshold, phrased in different parameters.

In the first phase, both protocols exhibit doubly exponential weight growth at rate \(1/(\tau-2)\). In the second phase, geometric routing exhibits doubly exponential decay of the distance \(D_v\), whereas greedy routing only ensures doubly exponential growth of its objective \(\phi(v)\). Notably, geometric routing requires a single preparatory step before the desired doubly exponential decay begins, a feature absent in greedy routing. Below, we briefly outline the two-phase trajectory of greedy routing.

\paragraph*{First phase of greedy routing:}
When \(v\) has small weight \(w_v\) compared to \(D_v\), neighbors are a.a.s.\ equidistant to \(t\) within factors of \((1 + \oh{1})\). Conversely, neighbor weights fluctuate by non-negligible constant factors (so do their distances from \(v\), but this does not affect the objective values). Consequently, weight fluctuations dominate the selection process, favoring hops to heavier neighbors.

By the \(\tau-1\) power-law tail of neighbors, the expected number of neighbors with weight at least \(w\) is \(\Th*{w_v w^{2-\tau}}\). Setting this to \(\Th{1}\) implies \(v\) has a neighbor of weight \(w \approx w_v^{1/(\tau-2)}\), yielding a doubly exponential weight growth at rate \(1/(\tau-2)\). This stops when a node \(v\) with \(\phi(v) \gtrsim w_v^{-1/(\tau-2)}\) is reached, as the relation of the current weight and distance to \(t\) changed. Hence, this first phase needs only \(\approx \log \log n / \abs{\log (\tau-2)}\) steps.

\paragraph*{Second phase of greedy routing:}
Beyond the threshold \(\phi(v) \gtrsim w_v^{-1/(\tau-2)}\), there exist neighbors of \(v\) that are much closer to \(t\).
Despite a typically lower weight relative to \(v\), their proximity compensates enough to increase the objective\footnote{Note that we can assume \(\phi(v)<1\), otherwise \(v\) is incident to \(t\) with probability \(\Om{1}\).} to \(\approx \phi(v)^{\tau-2}\). One can further show that the best neighbor keeps the routing in the second phase, so we can repeat the argument. Therefore, in the second phase, the objective increases by an exponent \(\tau-2\) at each step. After \(\approx \log \log n / \abs{\log (\tau-2)}\) rounds, the routing reaches a node \(v\) with objective \(\phi(v) = \Omega(1)\).

\subsection{Proof Overview}\label{subsec:proof_sketch}
In the formal proofs, we show that w.h.p. geometric routing adheres to the typical trajectory described above.
Although the best neighbor \(u\) of a single vertex \(v\) is predictable (see \Cref{subsubsec:where_neighbor} for the precise statements), a naive step-by-step analysis suffers from intractable dependencies: selecting \(u\) as the best neighbor reveals that no superior candidates exist in \(v\)'s neighborhood, which biases the environment for all subsequent steps.

To bypass these dependencies, we adapt the \emph{layer technique} of~\cite{BKLMM22}: we partition the vertex space into carefully crafted layers corresponding to the two phases of geometric routing. Uncovering these layers sequentially, rather than  tracking individual nodes on the path, allows us to rigorously track the trajectory's progress.

\subsubsection{Formalizing the Trajectory: The Layer Technique}\label{subsubsec:layers}
We now describe the layer argument used to formalize the intuitive behavior of geometric routing. The argument tracks the progress of the routing process through layers, while the local estimates in \Cref{subsubsec:where_neighbor} provide the probabilistic guarantees for moving from one layer to the next. 

The proof is organized around a trajectory lemma, which combines the layer construction with the local progress estimates on where to expect suitable neighbors. We state the lemma in a compressed form that captures the consequences needed for our main results, while the full layer-exposure argument is given in \Cref{appendix:geo_traj_thm}. The lemma shows that, once the routing process enters the main part, the protocol forces the path to leave the current layer and enter a subsequent one with high probability, except for the transition layer, which may be visited twice. Consequently, the path does not get stuck, visits each layer only a controlled number of times, and reaches the constant-distance region around the target within the desired number of steps. This yields \Cref{thm:length} and is the key ingredient for \Cref{thm:main}.

\paragraph*{Layers for geometric routing}
To formalize our discussion, we first introduce some technical notations. Throughout, we assume \(\alpha> \tau-1\) and \(2<\tau<3\).
For any \(\eps > 0\), set \(\gamma(\eps) \coloneqq \frac{1-\eps}{\tau-2}\).  In our proofs, \(\eps\) would be sufficiently small such that \(\gamma(\eps) > 1\).
Let \(N(v)\) denote the set of neighbors of \(v\), and \(D_v \coloneqq \norm{x_v - x_t}\). We also define
\(R_v(\eps) \coloneqq ( w_v ^{1+\gamma(\eps)} )^{1/d} \approx w_v^{(\tau-1)/d(\tau-2)}\) for sufficiently small \(\eps\), which is the expected distance from \(v\) to its furthest neighbor.

Let \(\eps_1 = \eps_1(\alpha,\tau)\) be a fixed constant which will be chosen sufficiently small in the proofs, in particular, \(\gamma(\eps_1)>1\). We categorize each vertex \(v\) based on whether its distance to the target is greater or less than the expected distance from \(v\) to its furthest neighbor, and define:
\begin{align*}
V_1 \coloneqq \SetB[\Big]{ v\in V}{D_v \geq R_v(\eps_1)}\quad \text{ and} \quad
V_2 &\coloneqq \SetB[\Big]{ v\in V}{D_v \leq R_v(\eps_1)}.
\end{align*}
The vertices in \(V_1\) and \(V_2\) behave differently under the geometric routing protocol, where vertices in \(V_1\) (resp. \(V_2\)) correspond to the first phase (resp. the second phase) of the routing.

Recall from discussion in \cref{subsec:typical_traj_geometric}, it typically takes only few steps for the source \(s\) to reach a vertex with constant weight \(w_0\), and similarly, for a vertex located within a ball of radius \(D_0\) around \(t\) to reach the target \(t\). These are referred to as the start and end phases of the routing, respectively. To this end, for any pair \((w,D)\) of weight and distance, we define
\[
\overline{V}(w, D) \coloneqq \SetB{v \in V_1}{\text{(1) : } w_v \geq w; \text{(2) : } D_v \geq D} \cup \SetB{ v \in V_2 }{D_v \geq D }.
\]
The main part of the routing process takes place within the set \(\overline{V}(w, D)\) for some suitable constant values of \(w,D\). In particular, we choose \(w_0\) and \(D_0\) sufficiently large so that the lemmata in \Cref{subsubsec:where_neighbor} can be applied to \(\overline{V}(w_0, D_0)\). Moreover, let \( V_{\leq D_0} \coloneqq \SetB{v\in V}{D_v \leq D_0}\) characterizes the end phase that will be treated separately. Our primary goal is to show that the routing survives this main part \(\overline{V}(w_0, D_0)\) with sufficiently high probability, and eventually reaches \(V_{\leq D_0}\), which is a small ball around target \(t\).

We analyze the structure of the geometric path in the main part by partitioning the set \(\overline{V}(w_0, D_0)\) into layers. We will define two classes of layers which correspond to the two phases: the layers \(A_{1,j}\) divide the area \(V_1 \cap \overline{V}(w_0, D_0)\) and are defined via weights, whereas the layers \(A_{2,j}\) divide the area \(V_2 \cap \overline{V}(w_0, D_0)\) and are defined via distances. The idea is to show that, with sufficiently high probability, there exists a node \(u_\ell\) on the geometric path \(P\) which has neighbors in \(V_{\leq D_0}\), such that until reaching \(u_\ell\), \(P\) visits every layer at most once, except possibly the special layer \(A_{2,\star}\), which may be visited twice. The special layer \(A_{2,\star}\) is the transition layer where the geometric path first enters the second phase.

Roughly speaking, we construct a sequence \(y_0=w_0<y_1<\cdots\) of weight thresholds and a sequence \(z_0=D_0<z_1<\cdots\) of distance thresholds, both growing doubly exponentially:
\[
    y_{j+1}=y_j^{1/(\tau-2)-o(1)}
    \qquad\text{and}\qquad
    z_{j+1}=z_j^{1/(\tau-2)-o(1)}.
\]
The first-phase layer \(A_{1,j}\) consists of vertices whose weights satisfy \(y_{j-1}\le w_v<y_j\), while the second-phase layer \(A_{2,j}\) consists of vertices whose distances to \(t\) satisfy \(z_{j-1}\le D_v<z_j\). The exact construction in the full proof uses two error parameters to handle the transition between constant and asymptotic scales. The resulting boundary layers contribute only an additive \(\Oh{f_0(n)} = \oh{\log\log n}\) term and do not affect the leading constant in \Cref{thm:length}.

We can now state a simplified trajectory lemma in the form needed below. The full technical version is proved in \Cref{appendix:geo_traj_thm}.

\begin{lemma}[Trajectory lemma, simplified form]\label{sim_traj_lemma}
Let \(w_0\) and \(D_0\) be sufficiently large constants, and let \(M:=\min\{w_0,D_0\}\). Let \(\om{1} \leq f_0(n) \leq \oh{\log\log n}\) be any growing function. Suppose that the geometric path \(P\) first enters
\(\overline{V}(w_0,D_0)\) at a vertex \(u_1\). Then, with probability
\(1-\Oh{M^{-\Omega(1)}}\), the following holds:

\(P\) follows the layered trajectory: it visits the layers in the prescribed order, visiting each first-phase layer and each second-phase layer at most once, except possibly the layer \(A_{2,\star}\), which may be visited twice. Moreover, before leaving the main part, \(P\) reaches a vertex \(u_\ell\) s.t.
\[
	\mathbb{E}_{<D_0}\big[\,\abs{N(u_{\ell}) \cap V_{<D_0}}\,\big] = \Om{M^{\Om{1}}},
\]
where the expectation is w.r.t. uncovering the vertices whose distance to target is at most \(D_0\), conditioned on the position and weight of \(u_\ell\). The number of hops from \(u_1\) to \(u_\ell\) is at most
\[
	\frac{1+\oh{1}}{\abs{\log(\tau-2)}} \left( \log\log_{w_0}(D_{u_1}^d) + \log\log_{D_0}(D_{u_1}) \right) 
	+\Oh{f_0(n)}.
\]
In particular, since \(D_{u_1}\le n^{1/d}\), this is at most \( \frac{2+\oh{1}}{\abs{\log(\tau-2)}}\log\log n\).
\end{lemma}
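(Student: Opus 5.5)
We follow the layer technique of \cite{BKLMM22}. The plan is to fix the two threshold sequences \(y_j\) and \(z_j\) explicitly, expose the layers one at a time, and show that from each layer the geometric path jumps forward with failure probability that is summable and dominated by the first term, of order \(M^{-\Omega(1)}\).

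First we fix the thresholds. Set \(y_0=w_0\) and \(y_{j+1}=y_j^{\gamma(\eps_j)}\), and \(z_0=D_0\) and \(z_{j+1}=z_j^{1/(\tau-2)-\eps_j'}\). The errors are \(\eps_j,\eps_j'\to0\), chosen slowly enough that the per-layer failure bounds are of the form \(y_{j}^{-\Omega(1)}\), resp.\ \(z_j^{-\Omega(1)}\). The boundary layers in which \(\eps_j\) is still constant number only \(\Oh{f_0(n)}\).

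We order the layers as \(A_{1,0},A_{1,1},\dots\) (weights increasing) followed by \(\dots,A_{2,1},A_{2,0}\) (distances decreasing). For the layer claim we use the principle of deferred decisions. We uncover the vertices of \(\overline V(w_0,D_0)\) layer by layer, from \(V_{\le D_0}\)'s complement inward, and at each step expose only the edges from the current path vertex \(v\) to unexposed layers. Ordering the exposure this way, the knowledge that \(v\) chose \(u\) only conditions on \(v\) having no better neighbor. That event concerns vertices in layers the path will never revisit, or it is monotone-decreasing information, which can only help.

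The local step estimates are the ones sketched in the typical-trajectory discussion.

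\emph{Phase 1, \(v\in A_{1,j}\).} Conditioned on \((x_v,w_v)\), the expected number of strong neighbors at distance \(\approx R_v(\eps)\) in the half-cone toward \(t\) with weight \(\ge y_{j+1}\) is \(w_v^{\Omega(\eps)}\). A Poisson tail bound then shows such a neighbor exists with probability \(1-\exp(-y_j^{\Omega(1)})\). Call such a neighbor \(u\). It remains to show that the closest-to-\(t\) neighbor \(u^*\) of \(v\) also lies in a later layer, and this is where \(\alpha>\tau-1\) enters. Any neighbor at least as close to \(t\) as \(u\) must have distance \(\gtrsim R_v(\eps)\) from \(v\). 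The expected number of weak neighbors of weight \(<y_{j+1}\) at such distances is
\[
\int w^{-\tau}\,r^{d-1}\,(w_vw/r^d)^\alpha\,dw\,dr \;=\; w_v^{-\Omega(1)},
\]
and the integral converges precisely because \(\alpha>\tau-1\). Hence, outside an event of probability \(y_j^{-\Omega(1)}\), \(u^*\) has weight \(\ge y_{j+1}\) or lies in \(V_2\).

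\emph{Phase 2, \(v\in A_{2,j}\) after the first hop.} We use the annulus argument: the closest neighbor lies within distance \(D_v^{\tau-2+o(1)}\) of \(t\) and carries weight \(\approx D^{d/(\tau-1)}\), with failure probability \(z_j^{-\Omega(1)}\). The first hop into \(V_2\) is handled separately as the preparation step. Its target \(u\) satisfies the weight–distance relation, but \(u\) may still lie in \(A_{2,\star}\). This is why \(A_{2,\star}\) may be visited twice.

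Summing the failure probabilities over the doubly exponential layers gives \(\Oh{M^{-\Omega(1)}}\). The last vertex \(u_\ell\) before \(V_{\le D_0}\) has weight \(\gtrsim D_{u_\ell}^{d/(\tau-1)}\) with \(D_{u_\ell}=D_0^{\Oh{1}}\). This yields \(\mathbb E[\abs{N(u_\ell)\cap V_{<D_0}}]=\Omega(M^{\Omega(1)})\).

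For the hop count, the number of phase-1 layers between \(w_0\) and the transition weight \(w\approx D_{u_1}^{d(\tau-2)/(\tau-1)}\) is
\[
\frac{1+o(1)}{\abs{\log(\tau-2)}}\,\log\log_{w_0}(D_{u_1}^d).
\]
The number of phase-2 layers from \(D_{u_1}\) down to \(D_0\) is analogous, plus \(\Oh{f_0(n)}\) boundary layers and \(\Oh{1}\) extra hops.

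The main obstacle is the dependency control. We must show that the conditioning from previously rejected neighbors, especially around the transition layer \(A_{2,\star}\) where phase-1 and phase-2 geometry interact, does not bias the Poisson counts used in the next step. We would handle this by insisting that every event we condition on is decreasing in the unexposed edge set, and by bounding the phase-2 weight–distance relation using only vertices in the annuli not yet exposed.
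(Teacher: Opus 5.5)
Your architecture is the paper's: doubly exponential weight and distance thresholds with a constant error parameter on $\Oh{f_0(n)}$ boundary layers, annulus estimates in each phase, the preparation step explaining the second visit to $A_{2,\star}$, a union bound dominated by the first layers, and the layer count. The genuine gap is the dependency control. You rightly flag it as the main obstacle, but the argument you give for it fails.

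You follow the actual path step by step. You assert that what ``$v$ chose $u$'' reveals either concerns layers never revisited, or is decreasing information that ``can only help''. What it actually reveals is that $v$ has no neighbor in the ball $\{x : \norm{x-x_t} < D_u\}$. This ball contains exactly the later layers where the good candidates of $u$ sit: at distance about $R_u(\eps)$ from $u$ towards $t$ in phase~1, and inside $\mathcal{A}_u$ in phase~2. Monotonicity is on your side only for the decreasing ``no bad neighbor'' events. The existence of a good neighbor is an increasing event, and conditioning on $v$ having no neighbor in a region thins the candidate process there, so it can only hurt. This is exactly the obstruction to a naive step-by-step analysis that the paper points out.

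The missing idea is the induced-path device of the layer technique. $P_{i,j}$ is the geometric path in $G[B_{i,j}]$, so the first vertex $v$ of $P_{i,j}$ in $A_{i,j}$ is a function of $G[B_{i,j}]$ alone. Its good candidates (e.g.\ $V_1^+(v,\eps)$, all of weight at least $y_j$) lie outside $B_{i,j}$ in position--weight space. Together with their edges to $v$, they are independent of $G[B_{i,j}]$, so the good-neighbor bound holds exactly as for a fixed $v$. \Cref{lem:correlation} is needed only for the bad neighbors inside $B_{i,j}$, whose exposure could change which vertex is first. Each $\Prob{\mathcal{E}_{i,j}^c}$ is then bounded without conditioning on other layers, and a union bound is taken. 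A deterministic induction then shows that on $\bigcap_{i,j}\mathcal{E}_{i,j}$ the actual path coincides with the induced paths and visits the layers in order. Without this device, or an explicit estimate that the whole history thins each next candidate set only negligibly, your per-step success probabilities are not justified.

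Three local steps also need repair.

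First, in phase~1, take weak cutoff $w_v^{\gamma(\eps')}$ and distances at least $R_v(\eps)/2$. The expected number of weak neighbors is then of order $w_v^{((\alpha-1)\eps-(\alpha+1-\tau)\eps')/(\tau-2)}$. With your cutoff $y_{j+1}=y_j^{\gamma(\eps_j)}$ and $w_v$ near the bottom $y_j$ of its layer, placing the annulus at $R_v(\eps_j)$ gives about $w_v^{\eps_j}$ weak neighbors, i.e.\ many. You need the scale $R_v(\eps_j/\zeta)$ with $\zeta>\frac{\alpha-1}{\alpha+1-\tau}$; this margin is the role of $\zeta_1$ (and of $\zeta_2$ in phase~2). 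The point of $\alpha>\tau-1$ is that it makes such a finite margin available, not that an integral converges.

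Second, the phase-1 estimate requires $D_v\ge R_v(\eps)$. This fails for $v\in V_1$ with $R_v(\eps_1)\le D_v<R_v(\eps)$ once $\eps<\eps_1$. The paper collects these vertices in the extra layer $A_{1,\infty}$, works there with $\eps_1$, and shows that the good neighbors land in $V_2$. Your alternative ``or lies in $V_2$'' is asserted but never derived.

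Third, the phase-2 annulus needs \Cref{heavytail} to keep $\eta\le 1-\eps_1$ so that $\mathcal{A}_v$ is non-degenerate. This adds a $D_0^{-\Omega(\eps_1)}$ term to the failure probability.
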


The main idea to prove the trajectory lemma is to establish the following property. 
Consider \(P_i\) which is the geometric path induced by the first \(i\) layers. Assuming that the \(i^{th}\) layer is not \(A_{2,\star}\), then, with sufficiently high probability, either
\begin{enumerate}
\item \(P_i\) contains no vertex from the first \(i\) layers, or
\item the first \(v \in  P_i\) contained in the \(i^{th}\) layer has a neighbor in some subsequent layer that is closer to \(t\) than every uncovered neighbor of \(v\) in the first \(i\) layers.
\end{enumerate}
In the case where the \(i^{th}\) layer is \(A_{2,\star}\), the same property holds with respect to either the first or the second vertex of \(P_i\) contained in \(A_{2,\star}\).

This is the mechanism by which the geometric choice forces the path to leave the current layer. Counting the layers induced by the doubly exponentially growing weight and distance thresholds gives the bound for the number of hops. The expectation condition involving neighbors of \(u_\ell\) is the preparation for routing to enter the end phase.

With this construction, we circumvent problematic dependencies and the trajectory lemma implies the desired length bound in \Cref{thm:length}. In \Cref{subsubsec:main_thm}, we combine this with the start and end phases to obtain the constant success probability in \Cref{thm:main}.

\subsubsection{Where to Expect Neighbors}\label{subsubsec:where_neighbor}

We now give the technical ingredients ensuring that geometric routing progresses to subsequent layers rather than remaining within the current one. Specifically, we collect the statements and describe the proof ideas that bound the weights or distances of the next neighbor chosen by the geometric routing protocol. The complete proofs are given in \Cref{appendix:where_neighbor}.

Recall the vertex sets:
\begin{align*}
V_1 \coloneqq \SetB[\Big]{ v\in V}{D_v \geq R_v(\eps_1)}\quad \text{ and} \quad
V_2 &\coloneqq \SetB[\Big]{ v\in V}{D_v \leq R_v(\eps_1)},
\end{align*}
where
\(R_v(\eps) \coloneqq ( w_v ^{1+\gamma(\eps)} )^{1/d} \approx w_v^{(\tau-1)/d(\tau-2)}\) is the expected distance from \(v\) to its furthest neighbor, and \(\eps_1 = \eps_1(\alpha,\tau)\) is a fixed constant which will be chosen sufficiently small.

We will see the weight of the next vertex \(u\) typically increases by a fixed exponent; whereas if \(v\in V_2\), the routing tends to remain within \(V_2\) and the trajectory is dominated by decreases in distance, as described in \Cref{subsec:typical_traj_geometric}.

\paragraph*{Analysis of each hop in the first phase}
Suppose \(v\in V_1\) is the current vertex. Its weight \(w_v\) is low in the sense that the current distance to the target \(D_v\) is larger than the expected distance to its furthest neighbor. In order to describe a trajectory with doubly exponential weight growth, we classify the vertices in \(V\) which accelerate the routing as desired. 
Let \(\zeta_1 \coloneqq \frac{3}{2}\) in the case \(\alpha = \infty\), and \(\zeta_1 \coloneqq \max\{\frac{2\alpha-1}{2\alpha+2-2\tau},\frac{3}{2}\}\) for case \(1<\alpha<\infty\). For \(v \in V_1\) and \(\eps = \eps(n)>0\), we define:
\begin{equation}\label{first_good/bad}
\begin{aligned}
V_1^+(v,\eps) &\coloneqq \SetB*{u\in V}{w_u \geq w_v^{\gamma(\zeta_1\eps)}\,\text{ and }\, D_u \leq D_v - \frac{R_v(\eps)}{2}},\text{ and}\\
V_1^-(v,\eps) &\coloneqq \SetB*{u\in V}{w_u < w_v^{\gamma(\zeta_1\eps)}\,\text{ and }\, D_u \leq D_v - \frac{R_v(\eps)}{2}}.
\end{aligned}
\end{equation}
The condition \(D_u \leq D_v - R_v(\eps)/2\) identifies the vertices \(u\) that are significantly closer to the target than the current vertex \(v\). The following lemma states that \(v \in V_1\) have a large expected number of good neighbors (i.e.\ \(u \in N(v) \cap V^+(v,\eps)\)); whereas the expected number of bad neighbors (i.e.\ \(u \in N(v) \cap V^-(v,\eps)\)) is small.
Notice that for \(\eps = \eps_1\), every vertex \( v\in V_1\) satisfies \(D_v \geq R_v(\eps_1)\). In this case, the statement can be applied for all \(v \in V_1\).

\begin{restatable}[\textbf{Neighborhoods in first phase}]{lemma}{FirstPhaseLemma}\label{1stphase}
Let \(0<\eps \leq \eps_1\) be sufficiently small and \(v \in V_1\) be such that \(D_v\geq R_v(\eps)\). And let \(w_1(\eps) \coloneqq c\cdot e^{d/\eps}\) for \(c\) large enough. Then
\begin{enumerate}[(i)]
	\item \(\E*{\,\abs{ N(v) \cap V^+(v,\eps)}\,} = \Om[\big]{ w_v^\eps}\), and
	\item if in addition \(w_v \geq w_1(\eps)\), then \(\E*{\,\abs{ N(v) \cap V^-(v,\eps)}\,} = \Oh[\big]{w_v^{-\Omega(\eps)}}.\)
\end{enumerate}
\end{restatable}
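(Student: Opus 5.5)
We compute both expectations directly from the Poisson point process. Conditional on \((x_v,w_v)\), the other vertices form a Poisson process with intensity \(1\) and i.i.d.\ weights of density \(f\). Hence for any measurable region \(\mathcal{B}\) and weight range \(I\),
\[
\E*{\abs{N(v)\cap \{u: x_u\in\mathcal{B}, w_u\in I\}}}=\int_{\mathcal{B}}\int_I f(w)\,p(x,w)\,dw\,dx,
\]
with \(p\) given by \eqref{eq:EP1} or \eqref{eq:EP2}. The key geometric input is this. Let \(r=\norm{x_u-x_v}\) and \(D_v\ge R_v(\eps)\). Then the set \(\{x: \norm{x-x_v}\le r,\ \norm{x-x_t}\le D_v-R_v(\eps)/2\}\) has volume \(\Th{r^d}\) whenever \(R_v(\eps)\le r\le D_v\). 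The infinity-norm ball around \(x_v\) intersects the ``half-space toward \(t\)'' in a constant fraction. For \(r< R_v(\eps)/2\) this set is empty, since \(D_u\ge D_v-r\).

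\emph{Part (i).} Restrict to \(u\) at distance \(r\in[R_v(\eps),2R_v(\eps)]\) in the good direction, with weight \(w_u\ge r^d/(c_1 w_v)\). There \(p_{uv}=\Th{1}\), and this weight bound equals \(\Th{w_v^{\gamma(\eps)}}\). Since \(\zeta_1\ge 3/2>1\), we have \(\gamma(\zeta_1\eps)<\gamma(\eps)\), so these vertices lie in \(V_1^+\). The expected count is
\[
\Th{R_v^d}\cdot\Th*{(R_v^d/w_v)^{1-\tau}}=\Th*{w_v^{1+\gamma(\eps)}\cdot w_v^{\gamma(\eps)(1-\tau)}}=\Th*{w_v^{1-\gamma(\eps)(\tau-2)}}=\Th{w_v^{\eps}},
\]
which gives \(\Om{w_v^\eps}\). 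The step \(\gamma(\eps)(\tau-2)=1-\eps\) is exactly why \(R_v\) was chosen this way.

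\emph{Part (ii).} A bad neighbor has \(w_u<W\coloneqq w_v^{\gamma(\zeta_1\eps)}\) and must lie at distance \(r\ge R_v(\eps)/2\) from \(v\). We split by whether \(r^d\le c_1 w_uw_v\) (strong) or not (weak).

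For the \emph{strong} part, we need \(w_u\ge r^d/(c_1w_v)\ge \Om{w_v^{\gamma(\eps)}}\). This is at least \(W\cdot w_v^{\gamma(\eps)-\gamma(\zeta_1\eps)}=W w_v^{\Om{\eps}}\), which exceeds \(W\) once \(w_v\ge w_1(\eps)\). The constant in \(w_1(\eps)=ce^{d/\eps}\) is what absorbs the constants \(c_1,2^d\) against \(w_v^{(\zeta_1-1)\eps/(\tau-2)}\). So the strong part is empty.

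For the \emph{weak} part (\(\alpha<\infty\); it is absent for \eqref{eq:EP2} up to the constant-width fuzzy interval, handled the same way as the strong part), we bound the contribution by
\[
\int_{r\ge R_v/2} r^{d-1}\int_1^{W} w^{-\tau}\Big(\frac{w w_v}{r^d}\Big)^{\alpha}\,dw\,dr .
\]
Since \(\alpha>\tau-1\), the inner integral is dominated by the top end, giving \(\Th{W^{\alpha+1-\tau}}\). The radial integral converges since \(\alpha>1\) and is dominated by \(r\approx R_v\), giving \(w_v^{\alpha}R_v^{d(1-\alpha)}\). Altogether we get
\[
w_v^{\alpha+(1+\gamma(\eps))(1-\alpha)+\gamma(\zeta_1\eps)(\alpha+1-\tau)} .
\]
Write \(\gamma(\eps)=\gamma_0-\eps/(\tau-2)\) with \(\gamma_0=1/(\tau-2)\). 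At \(\eps=0\) the exponent is \(1+\gamma_0(1-\alpha)+\gamma_0(\alpha+1-\tau)=1+\gamma_0(2-\tau)=0\). The first-order term in \(\eps\) is
\[
\frac{\eps}{\tau-2}\Big[(\alpha-1)-\zeta_1(\alpha+1-\tau)\Big].
\]
Requiring this to be negative gives \(\zeta_1>(\alpha-1)/(\alpha+1-\tau)\). We check that \(\frac{2\alpha-1}{2\alpha+2-2\tau}=\frac{\alpha-1/2}{\alpha+1-\tau}\) exceeds this, with margin \(\frac{1/2}{\alpha+1-\tau}>0\). So the exponent is \(-\Om{\eps}\), which yields \(\Oh{w_v^{-\Om{\eps}}}\).

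The main obstacle is this exponent bookkeeping in the weak-tie integral. One must verify that the cut at \(W\) and the radial cut at \(R_v/2\) are the dominant endpoints; this uses \(\alpha>\tau-1\) and \(\alpha>1\). One must also check that the explicit \(\zeta_1\) makes the \(\eps\)-linear term strictly negative uniformly in small \(\eps\).
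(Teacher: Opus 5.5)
Your proposal is correct and takes essentially the same route as the paper. In (i) you count heavy vertices in an annulus of radius $\Theta(R_v(\eps))$ around $x_v$ on the side facing $t$; the paper uses radii $[R_v(\eps)/2,\,R_v(\eps)]$, which keeps the annulus within distance $D_v$ and makes the volume bound immediate. In (ii) you show that the strong/fuzzy range contains no bad vertex once $w_v\ge w_1(\eps)$, then bound the weak-tie integral and arrive at exactly the paper's exponent $\frac{(\alpha-1)-\zeta_1(\alpha+1-\tau)}{\tau-2}\,\eps$.

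One small fix is needed in (i). There is no lower bound on $w_v$ in (i), so require $w_u\ge\max\{r^d/(c_1w_v),\,w_v^{\gamma(\eps)}\}$, as the paper does. Otherwise a constant $c_1>1$ could push your threshold below $w_v^{\gamma(\zeta_1\eps)}$, and not all counted vertices would lie in $V_1^+(v,\eps)$.
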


The idea of the proof is to investigate an annulus around \(v\) whose inner and outer radii are both on the order of \(R_v(\eps)\). This allows us to lower (resp. upper) bound the number of candidates of good (resp. bad) neighbors of \(v\).
Therefore, as the routing proceeds to a neighbor that is significantly closer to target \(t\), with high probability (w.h.p.) it also hops to a vertex with higher weight, instead of staying at low-weight vertices.

\paragraph*{Analysis of each hop in the second phase}\label{subsec:2ndphase}
Once the routing reaches a vertex in \(V_2\), it enters the second phase. The vertices in \(V_2\) have relatively large weight so that \(v\in V_2\) has neighbors at a distance roughly \(D_v\). We assume \(D_v >1\) throughout this section. This is valid since we are in the main part of routing, where it is guaranteed that \(D_v>D_0\), where \(D_0\) is a constant chosen sufficiently large. The goal now is to decrease the distance to \(t\) doubly exponentially at every step of the routing. Furthermore, once we reach a vertex in \(V_2\), we aim to remain within \(V_2\) throughout the rest of the process.

The desired behavior is almost true for the entire second phase, except for the very first step upon entering it. To be more precise, suppose the routing reaches the first vertex \(v\in V_2\) in the second phase, and let \(u \in N(v)\) be the next vertex selected by the routing protocol. Ideally, we might hope that \(D_u \approx D_v^{\tau-2}\), analogous to the behavior in the first phase. However, this is not the case. In fact, we have \(D_u \approx D_v^{(1-\eta)(\tau-1)}\), where \(\eta\in [\tfrac{\tau-2}{\tau-1-\eps_1}, \tfrac{1}{\tau-1-\eps_1}]\) is a random variable determined by the relation \(w_v = \Th{D_v^{d\eta}}\). Here, \(\eta\) captures the ratio between the weight \(w_v\) and the distance \(D_v\) when the routing first enters the second phase at vertex \(v\).
That said, from \(u\) onward, we will show that the distance to the target decreases roughly by a fixed exponent \(\tau-2\) with each subsequent hop toward the target.

The idea behind analyzing the second phase is as follows: Suppose \(v\in V_2\) is the current vertex, we search for the smallest annulus around the target \(t\) that contains a neighbor of \(v\). More precisely, given \(v\in V_2\) and \(\eps\), consider an annulus centered at \(t\) with outer radius \(r\) and inner radius \(r^{1-\Oh{\eps}}\), and we let \(r\) be the smallest value such that this annulus contains a neighbor of \(v\). By the minimality of \(r\), this annulus contains the neighbor of \(v\) that is closest to \(t\), and no neighbors of \(v\) lie inside the inner circle of radius \(r^{1-\Oh{\eps}}\). In other words, there are neighbors of \(v\) at approximately distance \(r\) from \(t\), but not much closer. Hence, the distance from the next vertex selected by the routing protocol to \(t\) will be of order \(r\).

To define the minimal annulus centered at \(t\) that contains a neighbor of \(v\). Suppose the current vertex \(v\in V_2\) is such that \(w_v = \Th{D_v^{d\eta}}\) for some \(\eta \geq \frac{\tau-2}{\tau-1-\eps_1} > \frac{\tau-2}{\tau-1}\). We may assume \(\eta\) is bounded away from \(1\) with a neglectable error probability as shown in the following:

\begin{restatable}{lemma}{HeavyTailLemma}\label{heavytail}
For \(2<\tau <3\), and \(\eps>0\) small enough, let \(V_{\geq D_0} \coloneqq \SetB{v\in V}{D_v \geq D_0}\), then
\[\Prob*{\forall\,v\in V_{\geq D_0}\,,\,w_v \leq \Th*{D_v^{d(1-\eps)}}} \geq 1- D_0^{-\Om{\eps}}.\]
\end{restatable}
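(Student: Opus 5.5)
The plan is a first-moment (union) bound over dyadic distance shells around \(x_t\), using only that the vertex set is a Poisson point process of intensity \(1\) with i.i.d.\ power-law weights. For \(k\ge 0\) let \(S_k\coloneqq\SetB{x\in\mathcal{X}}{2^kD_0\le \norm{x-x_t}<2^{k+1}D_0}\). Every \(v\in V_{\geq D_0}\) lies in exactly one shell, and shells with \(2^kD_0>n^{1/d}\) are empty. For \(v\in S_k\) we have \(D_v^{d(1-\eps)}\ge (2^kD_0)^{d(1-\eps)}\). It therefore suffices to show that, with the stated probability, no vertex in \(S_k\) has weight exceeding \(W_k\coloneqq (2^kD_0)^{d(1-\eps)}\), simultaneously for all \(k\). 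The constant hidden in \(\Th{\cdot}\) is then absorbed by this factor-\(2\) slack: we either take the implicit constant to be \(2^{-d(1-\eps)}\), or apply the argument with the upper endpoint of each shell.

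Fix \(k\). The number of vertices in \(S_k\) with \(w_v>W_k\) is Poisson with mean equal to the volume of the shell times the weight tail:
\[
\mathrm{Vol}(S_k)\cdot\Prob{w>W_k}=\Th*{(2^kD_0)^d}\cdot\Th*{W_k^{1-\tau}}=\Th*{(2^kD_0)^{d\left(1-(\tau-1)(1-\eps)\right)}}.
\]
Since \(\tau>2\), the exponent \(1-(\tau-1)(1-\eps)\) is at most \(-(\tau-2)+(\tau-1)\eps\). This is a negative constant \(-c\) once \(\eps<(\tau-2)/(\tau-1)\), which is the ``\(\eps\) small enough'' requirement. By Markov's inequality, the probability that \(S_k\) contains a violating vertex is \(\Oh{(2^kD_0)^{-cd}}\).

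Summing over \(k\ge0\) gives a geometric series:
\[
\sum_{k\ge0}\Oh{(2^kD_0)^{-cd}}=\Oh{D_0^{-cd}}.
\]
This is polynomially small in \(D_0\), even better than the claimed \(D_0^{-\Om{\eps}}\), and the rate is independent of \(n\). The only points needing care are routine. First, the torus boundary only truncates shells, which can only decrease volumes. Second, \(s\) and \(t\) are conditioned on: \(t\) itself has \(D_t=0\) and is excluded, while \(s\) is a single fixed vertex whose weight must be assumed compatible, or else handled separately. Third, any vertex with \(D_v\) between \(D_0\) and \(n^{1/d}\) is covered by some shell. I do not expect a genuine obstacle here; the only step requiring thought is verifying the exponent sign, which is where \(\tau>2\) is used.
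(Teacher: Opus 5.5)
Your proof is correct and follows essentially the paper's route: dyadic annuli around \(x_t\), a first-moment (Markov) bound on the number of overly heavy vertices in each annulus, and a union bound summed as a geometric series dominated by the innermost annulus. The only difference is that the paper bounds each annulus via \Cref{maxweight}(ii) at the much smaller threshold \(A^{1/(\tau-1)+\eps}\), which yields only \(D_0^{-\Om{\eps}}\). You instead compute the Poisson mean directly at the threshold \((2^kD_0)^{d(1-\eps)}\), which gives the stronger bound \(D_0^{-\Om{1}}\), and your remark that a conditioned source \(s\) of arbitrary weight must be excluded or handled separately is a fair caveat the paper leaves implicit.
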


\Cref{heavytail} ensures that we may assume \(\eta \leq 1-\eps\), at a cost of an error probability that is polynomially small in \(\eps\). We adopt this assumption for the remainder of this discussion. From this point on, whenever we encounter an expression of the form \( D_v ^{1-\eta -x}\), with \(x\) denoting a small quantity, we implicitly assume that \(x\) is chosen small enough such that the exponent \(1-\eta-x\) remains positive.

To define the radius of the minimal annulus, let \(r_v = r(v,\eps) \coloneqq D_v^{(1-\eta+\eps)(\tau-1)}\), where \(\eps>0\) is small enough such that \((1-\eta+\eps)(\tau-1)<1\), which is possible since \((1-\eta)(\tau-1) < \tau-2\) is bounded away from \(1\). We will also assume this throughout the rest of the discussion whenever \(\eps\) is chosen sufficiently small. Moreover, let us define \(D_1(\eps) \coloneqq c \cdot e^{d/\eps}\) to be a constant depending on \(\eps\) with \(c\) chosen sufficiently large. Then, \((1-\eta+\eps)(\tau-1)<1\) implies that we can assume \(r_v < \frac{1}{2} D_v\) when \(D_v \geq  D_1(\eps)\).

Finally, let us consider the annulus \(\mathcal{A}_{v}\) centered at \(t\), with outer radius \(r_v = D_v^{(1-\eta+\eps)(\tau-1)}\), and inner radius \(r_I = D_v^{(1-\eta-\eps)(\tau-1)} = r_v^{1-\Oh{\eps}}\). Note that the area of \(\mathcal{A}_{v}\) is \(\Th*{r_v^d}\). We shall see that \(\mathcal{A}_{v}\) is chosen such that with high probability, there is a neighbor of \(v\) in the annulus \(\mathcal{A}_{v}\), but no neighbors of \(v\) in its inner circle.

In fact, let \(\mathcal{I}_\eps\) denote the inner circle of \(\mathcal{A}_\eps\), whose the area is \(\Th{r_I^d}\). Then we have:
\begin{restatable}[\textbf{Neighborhoods in \(\mathcal{I}_{\eps}\)}]{lemma}{InnerCircleLemma}\label{2_inner}
Let \(v\in V_2\) with \(w_v = \Th{D_v^{d\eta}}\), and let \(\eps>0\) be sufficiently small. If \(D_v \geq D_1(\eps)\), then 
\[\Prob*{N(V)\cap V(\mathcal{I}_{\eps}) = \emptyset} \geq 1 - D_v^{-\Om{\eps}}.\]
\end{restatable}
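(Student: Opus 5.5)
The plan is a first-moment computation followed by Markov's inequality. Conditioning on the position and weight of \(v\), the vertices in the inner circle \(\mathcal{I}_\eps\) form a Poisson point process of intensity \(1\) with i.i.d.\ power-law weights. So, by the Mecke/Campbell formula,
\[
\E*{\abs{N(v)\cap V(\mathcal{I}_\eps)}} = \int_{\mathcal{I}_\eps}\int_1^\infty f(w)\, p\big(w_v,w,\norm{x-x_v}\big)\,dw\,dx .
\]
It then suffices to show that this expectation is \(\Oh{D_v^{-\Om{\eps}}}\), since
\[
\Prob*{N(v)\cap V(\mathcal{I}_\eps)\neq\emptyset}\le \E*{\abs{N(v)\cap V(\mathcal{I}_\eps)}}.
\]

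First, I control the geometry. Since \((1-\eta-\eps)(\tau-1) < (1-\eta+\eps)(\tau-1) < 1\), and \(D_v\ge D_1(\eps)\) with \(D_1(\eps)\) chosen large, we have \(r_I\le r_v<D_v/2\). By the triangle inequality, every \(x\in\mathcal{I}_\eps\) therefore satisfies \(\norm{x-x_v}\ge D_v/2\). Since the edge probability in \eqref{eq:EP1} is monotone in distance up to constants, for every point of \(\mathcal{I}_\eps\) it is at most
\[
\Oh*{\min\big\{(w w_v/D_v^d)^\alpha,1\big\}}.
\]
This replaces the spatial integral by \(\mathrm{Vol}(\mathcal{I}_\eps)=\Th{r_I^d}\) times a one-dimensional weight integral.

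Second, I evaluate the weight integral. Let \(w^\ast \coloneqq D_v^d/w_v = \Th{D_v^{d(1-\eta)}}\), which is large because \(\eta\le 1-\eps\) (we operate under the assumption justified by \Cref{heavytail}). Split the integral at \(w^\ast\):
\[
\int_1^{w^\ast} w^{-\tau}(w/w^\ast)^\alpha\,dw \;+\; \int_{w^\ast}^\infty w^{-\tau}\,dw .
\]
The second part is \(\Th{(w^\ast)^{1-\tau}}\). In the first part the integrand is \(w^{\alpha-\tau}(w^\ast)^{-\alpha}\), and this is exactly where \(\alpha>\tau-1\) enters: the exponent \(\alpha-\tau>-1\), so the integral is dominated by its upper end and equals \(\Th{(w^\ast)^{1-\tau}}\). (If \(\alpha<\tau-1\), the lower end would dominate and give only \((w^\ast)^{-\alpha}\), i.e.\ the weak ties.) In the threshold case \eqref{eq:EP2} only the tail \(w\ge w^\ast/c_2\) contributes, which gives the same bound.

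Third, I combine the two estimates:
\[
\E*{\abs{N(v)\cap V(\mathcal{I}_\eps)}} = \Oh*{r_I^d\,(w^\ast)^{1-\tau}} = \Oh*{D_v^{d(1-\eta-\eps)(\tau-1)}\,D_v^{-d(1-\eta)(\tau-1)}} = \Oh*{D_v^{-d\eps(\tau-1)}}.
\]
Markov's inequality then yields the claim, and the hidden constants are absorbed by choosing \(c\) in \(D_1(\eps)\) large.

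The only step I expect to need care is the weight integral. There one must verify that \(\alpha>\tau-1\) makes the near-threshold weights dominate, and keep track of the \(\Th{\cdot}\) relation \(w_v=\Th{D_v^{d\eta}}\) so that constant factors do not erode the \(D_v^{-\Om{\eps}}\) gain. Everything else is routine.
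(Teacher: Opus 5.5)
Your proof is correct and reaches the same exponent \(-d(\tau-1)\eps\) as the paper, but you organize it differently.

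\textbf{The paper's route.} The paper splits the vertices of \(\mathcal{I}_\eps\) at an auxiliary threshold \(D_v^{d(1-\eta)-\eps'}\).
\begin{itemize}
\item Vertices above the threshold are ruled out altogether by \Cref{maxweight}(ii). This is an event about the vertex set only, with no edges involved.
\item For vertices below the threshold, an edge to \(v\) is deterministically impossible in the threshold case. For finite \(\alpha\), a first-moment bound plus Markov gives \(\Oh{D_v^{-d(\tau-1)\eps-(\alpha+1-\tau)\eps'}}\).
\end{itemize}
The two error probabilities are then added.

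\textbf{Your route.} You do a single Mecke/Campbell integral over all weights and split it at \(w^\ast=D_v^d/w_v\). The heavy part is then just the expected number of vertices of weight at least \(w^\ast\) in \(\mathcal{I}_\eps\), namely \(\Th{r_I^d(w^\ast)^{1-\tau}}\). This is exactly the first moment hidden inside the proof of \Cref{maxweight}(ii), and one application of Markov finishes.

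\textbf{What each buys.} Your version is shorter and needs no auxiliary lemma or parameter. It also shows transparently that \(\alpha>\tau-1\) is exactly what makes the light weights near \(w^\ast\) dominate the integral. The paper's version separates a purely vertex-set event (no heavy vertex in \(\mathcal{I}_\eps\) at all) from an edge event, and reuses the same max-weight lemma that drives \Cref{heavytail,2_annulus}.

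In the paper's route, \(\eps'\) must satisfy \(0<\eps'<d\eps\). Only then does the threshold exceed the typical maximum weight \(D_v^{d(1-\eta-\eps)}\) in \(\mathcal{I}_\eps\). The text instead requires \(\eps'>d\eps\), which is the wrong direction: \Cref{maxweight}(i) then gives polynomially many vertices above the threshold. Your split at \(w^\ast\) sidesteps this choice entirely.

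One cosmetic point: in the threshold case your cut-off should be \(w^\ast/(2^d c_2)\) rather than \(w^\ast/c_2\), since you only know \(\norm{x-x_v}\ge D_v/2\). This changes only constants.
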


Next, we show that the annulus \(A_v\) contains neighbors of \(v\). Crucially, we lower bound their weights to prevent the weight from dropping significantly, thereby ensuring that the process does not revert to the first phase. In fact, rather than merely verifying that the next vertex \(u\) satisfies the minimum weight condition for \(V_2\) (i.e. \(w_u \geq D_u^{d(\tau-2)/(\tau-1-\eps_1)}\)), we establish a strictly stronger bound. This tighter bound not only ensures \( u\in V_2\) but also reveals deeper insights into the routing behavior in the second phase.

To this end, analogous to the first phase, we define the sets of good and bad vertices for the second phase.
Let \(v\in V_2\) be such that \(w_v = \Th{D_v^{d\eta}}\), and let \(\eps = \eps(n)>0\) be sufficiently small. Let \(\zeta_2 \coloneqq 1\) for \(\alpha = \infty\), and \(\zeta_2 \coloneqq \max\{\frac{2\tau-1}{2\alpha+2-2\tau},1\}\) for \(1<\alpha<\infty\).  We define:
\begin{equation}\label{second_good/bad}
\begin{aligned}
V_2^+(v,\eps) &\coloneqq \SetB*{ u\in V}{w_u\geq D_u^{\tfrac{d(1-\eta-\zeta_2\eps)}{(\tau-1)(1-\eta+\eps)}} = D_u^{\tfrac{d}{\tau-1}\left(1-\tfrac{(\zeta_2+1)\eps}{1-\eta+\eps}\right)}},\text{ and}\\
V_2^-(v,\eps) &\coloneqq \SetB*{ u\in V}{w_u < D_u^{\tfrac{d(1-\eta-\zeta_2\eps)}{(\tau-1)(1-\eta+\eps)}} = D_u^{\tfrac{d}{\tau-1}\left(1-\tfrac{(\zeta_2+1)\eps}{1-\eta+\eps}\right)}}.
\end{aligned}
\end{equation}

Similar to the first phase, the following lemma shows that the probability of \(v\in V_2\) having a good neighbor in \(\mathcal{A}_{v}\) is high, while the probability of such \(v\) having a bad neighbor in \(\mathcal{A}_{v}\) is low. Therefore, as the routing proceeds, w.h.p. it hops to a neighbor with distance roughly \(r_v\) to the target \(t\), but not significantly closer. Moreover, the weight does not drop substantially, which ensures that the routing process stays within \(V_2\).

\begin{restatable}[\textbf{Neighborhoods in \(\mathcal{A}_v\)}]{lemma}{AnnulusLemma}\label{2_annulus}
Let \(v\in V_2\) with \(w_v = \Th{D_v^{d\eta}}\), and let \(\eps>0\) be sufficiently small.
If \(D_v \geq D_1(\eps)\), then
\begin{enumerate}[(i)]
	\item \(\Prob*{N(V)\cap V_2^+(v,\eps) \cap V(\mathcal{A}_{v})  \neq \emptyset } \geq 1 - e^{-D_v^{\Om{\eps}}}\);
	\item \(\Prob*{N(V)\cap V_2^-(v,\eps) \cap V(\mathcal{A}_{v}) = \emptyset } \geq 1 - D_v^{-\Om{\eps}}\).
\end{enumerate}
\end{restatable}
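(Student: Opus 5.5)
We compute everything by integrating over position and weight of a Poisson vertex \(u\in\mathcal{A}_v\), writing \(r:=\norm{x_u-x_t}\in[r_I,r_v]\). Since \(r_v<\tfrac12 D_v\) for \(D_v\ge D_1(\eps)\), every \(u\in\mathcal{A}_v\) satisfies \(\norm{x_u-x_v}=\Th{D_v}\). Hence the connection probability is
\[
p_{uv}=\Th*{\min\{(w_uw_v/D_v^d)^\alpha,1\}}=\Th*{\min\{(w_u D_v^{-d(1-\eta)})^\alpha,1\}}.
\]
This holds for \eqref{eq:EP1}; for \eqref{eq:EP2} the bound is \(\Th{1}\) when \(w_u\ge c\,D_v^{d(1-\eta)}\) and \(0\) when \(w_u\) is a large constant factor below that. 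The threshold weight is therefore \(W:=D_v^{d(1-\eta)}\). By independence of disjoint Poisson regions and independent edges, the set of neighbors of \(v\) in a region of weight–position space is Poisson. So (i) reduces to a lower bound on a Poisson mean, and (ii) reduces to an upper bound on an expectation via Markov.

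For (i), I restrict to vertices in the outer part of \(\mathcal{A}_v\), say \(r\in[r_v/2,r_v]\), a region of volume \(\Th{r_v^d}\), and with weight \(w_u\ge W\). The expected number of such vertices, each adjacent with probability \(\Th{1}\), is
\[
\Th*{r_v^d\, W^{1-\tau}}=\Th*{D_v^{d(1-\eta+\eps)(\tau-1)-d(1-\eta)(\tau-1)}}=\Th{D_v^{d\eps(\tau-1)}}.
\]
The Poisson emptiness probability is therefore \(e^{-D_v^{\Om{\eps}}}\). It remains to check that these vertices lie in \(V_2^+(v,\eps)\). Since \(D_u\le r_v\), we have \(D_u^{d(1-\eta-\zeta_2\eps)/((\tau-1)(1-\eta+\eps))}\le D_v^{d(1-\eta-\zeta_2\eps)}\le W\), which gives membership in \(V_2^+\).

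For (ii), I bound \(\E{\abs{N(v)\cap V_2^-\cap V(\mathcal{A}_v)}}\) and split the weight range at \(W\).
\begin{enumerate}
\item Weights in \([W,\,\text{bad threshold})\), with adjacency \(\Th{1}\). Here the bad threshold is at most \(r^{d(1-\eta-\zeta_2\eps)/((\tau-1)(1-\eta+\eps))}\le D_v^{d(1-\eta-\zeta_2\eps)}<W\), so this range is empty. The same holds for \eqref{eq:EP2} up to constants.
\item Weights below \(W\) (weak ties), which exist only for \(\alpha<\infty\). I integrate \(r^{d-1}\,w^{-\tau}(wW^{-1})^\alpha\) for \(w\) up to \(B(r)\), the bad threshold at radius \(r\). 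Since \(\alpha>\tau-1\), the weight integral is dominated by its upper end and gives \(\Th{B(r)^{\alpha+1-\tau}W^{-\alpha}}\). The expected count is then at most \(\Th{r_v^d\,B(r_v)^{\alpha+1-\tau}W^{-\alpha}}\), and plugging in exponents yields
\[
D_v^{\,d[(1-\eta+\eps)(\tau-1)+(1-\eta-\zeta_2\eps)(\alpha+1-\tau)-(1-\eta)\alpha]}
= D_v^{\,d\eps[(\tau-1)-\zeta_2(\alpha+1-\tau)]}.
\]
This exponent must be negative and of order \(\eps\). Note that \((1-\eta)\) terms cancel exactly, because \(B(r_v)\) is exactly \(D_v^{d(1-\eta-\zeta_2\eps)}\). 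We therefore need \(\zeta_2(\alpha+1-\tau)>\tau-1\), which is what \(\zeta_2\ge\frac{2\tau-1}{2\alpha+2-2\tau}\) gives, since \(\zeta_2(\alpha+1-\tau)\ge(2\tau-1)/2>\tau-1\). I should also check that smaller \(r\) in the annulus does not dominate. The integrand in \(r\) is \(r^{d-1}B(r)^{\alpha+1-\tau}\) with a positive exponent, so the upper end \(r_v\) dominates.
\end{enumerate}
Markov's inequality then gives (ii) with probability at least \(1-D_v^{-\Om{\eps}}\).

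The main obstacle I expect is the exponent bookkeeping in the weak-tie term. In particular, the constant \(\zeta_2\) must beat the volume gain \((\tau-1)\eps\) uniformly over \(\eta\in[\frac{\tau-2}{\tau-1-\eps_1},1-\eps]\), and the lower weight cutoff at \(1\) must not matter. If the margin is too tight, I would shrink the good region in (i) to radii \([r_v/2,r_v]\) only, which is already done, and absorb constants via \(D_v\ge D_1(\eps)=c\,e^{d/\eps}\), so that \(\Th{1}\) factors are dominated by \(D_v^{\eps}\).
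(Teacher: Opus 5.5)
Your proof is correct and is essentially the paper's argument. In (i) you count vertices of $\mathcal{A}_v$ with weight of order at least $D_v^{d(1-\eta)}$; these lie in $V_2^+(v,\eps)$, are adjacent to $v$ with constant probability, and number $\Theta(D_v^{d\eps(\tau-1)})$ in expectation. In (ii) you bound the weak-tie neighbors with the same integral, where the $(1-\eta)$ terms cancel to give exponent $d\eps[(\tau-1)-\zeta_2(\alpha+1-\tau)]<0$, apply Markov, and treat $\alpha=\infty$ deterministically. The only difference is cosmetic: you get the emptiness probability in (i) in one step from Poisson thinning, whereas the paper first concentrates the number of heavy vertices via its maximum-weight lemma and then uses $(1-p_{uv})^{Z_{v^+}}$.
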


Suppose \(v\in V_2\), and \(u \in N(v)\) is the next vertex selected by the routing protocol. By \Cref{2_inner,2_annulus}, with high probability we obtain
\[D_v^{(1-\eta-\eps)(\tau-1)} \leq D_u \leq D_v^{(1-\eta+\eps)(\tau-1)}.\]
Recall that if \(v\) is the first vertex that the routing reaches upon entering the second phase, then \( \eta(v)\) is a random variable lying in the interval \([\tfrac{\tau-2}{\tau-1-\eps_1}, \tfrac{1}{\tau-1-\eps_1}]\).
From this point on, as shown in \Cref{2_annulus}, w.h.p. each subsequent vertex is not a bad neighbor of its predecessor.
In other words, we have the following corollary:
\begin{corollary}\label{cor:2nd_phase_decay}
Except for the very first vertex that the routing visits upon entering the second phase, every vertex \(v\) chosen by the routing protocol satisfies
\( w_v \geq D_v^{\tfrac{d}{\tau-1}\left(1 - \frac{(\zeta_2+1)\eps}{1-\eta+\eps} \right)},\)
where the quantity \((\frac{\zeta_2+1}{1-\eta+\eps}\cdot \eps)\) can be made arbitrarily small. In particular, this implies that \(v\in V_2\), and the next neighbor \(u\) satisfies \(D_u \leq D_v^{(\tau-2)(1+\Oh{\eps})}\), i.e.\ the distance to the target decreases roughly by an exponent of \(\tau-2\) with each hop thereafter.
\end{corollary}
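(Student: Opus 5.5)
The plan is an induction along the second-phase portion of the geometric path, using \Cref{2_inner} and \Cref{2_annulus} as one-step estimates.

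Let \(v\in V_2\) be the current vertex with \(w_v=\Th{D_v^{d\eta}}\), and let \(u\) be the neighbor chosen by the protocol. We work on the intersection of three events:
\begin{itemize}
\item \(v\) has no neighbor in \(\mathcal{I}_\eps\) (\Cref{2_inner});
\item \(v\) has a neighbor in \(V_2^+(v,\eps)\cap V(\mathcal{A}_v)\) (\Cref{2_annulus}(i));
\item \(v\) has no neighbor in \(V_2^-(v,\eps)\cap V(\mathcal{A}_v)\) (\Cref{2_annulus}(ii)).
\end{itemize}
On this event, the protocol picks the neighbor closest to \(t\). That neighbor lies in \(\mathcal{A}_v\), because there are no neighbors in the inner ball and at least one in the annulus. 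Since no annulus neighbor is bad, \(u\in V_2^+(v,\eps)\). This gives
\[
D_v^{(1-\eta-\eps)(\tau-1)}\le D_u\le D_v^{(1-\eta+\eps)(\tau-1)}
\quad\text{and}\quad
w_u\ge D_u^{\frac{d}{\tau-1}\left(1-\frac{(\zeta_2+1)\eps}{1-\eta+\eps}\right)}.
\]
The failure probability of this step is \(D_v^{-\Om{\eps}}\), after invoking \Cref{heavytail} so that \(\eta\le 1-\eps\). Because the distances decay doubly exponentially from \(D_v\ge D_0\), these error terms sum to \(\Oh{D_0^{-\Om{\eps}}}\) over the whole phase.

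Next we check that \(u\in V_2\) and compute the new exponent. Write \(w_u=\Th{D_u^{d\eta'}}\). The weight bound gives \(\eta'\ge\frac{1}{\tau-1}(1-\delta)\) with \(\delta\coloneqq\frac{(\zeta_2+1)\eps}{1-\eta+\eps}\). For \(\eps\) small enough relative to \(\eps_1\) and \(\tau\), we have \(\frac{1-\delta}{\tau-1}\ge\frac{\tau-2}{\tau-1-\eps_1}\), since \(\frac{1}{\tau-1}>\frac{\tau-2}{\tau-1}\). Hence \(R_u(\eps_1)\ge D_u\) and \(u\in V_2\). We need \(1-\eta+\eps\) bounded below so that \(\delta\) is small; this holds because \(\eta\le 1-\eps\), or more robustly \(\eta\le\frac{1}{\tau-1-\eps_1}<1\).

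Now apply the one-step estimate to \(u\) with \(\eta'\) in place of \(\eta\). The next vertex \(u'\) satisfies
\[
D_{u'}\le D_u^{(1-\eta'+\eps)(\tau-1)}\le D_u^{(\tau-2+\delta+(\tau-1)\eps)}=D_u^{(\tau-2)(1+\Oh{\eps})}.
\]
This exponent uses the lower bound on \(\eta'\). We also need \(\eta'\) to be bounded above by \(1-\eps\), which \Cref{heavytail} supplies uniformly over all vertices. So every vertex after the first one in \(V_2\) satisfies the stated weight bound, and each subsequent hop reduces the distance by an exponent of \((\tau-2)(1+\Oh{\eps})\).

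The main obstacle is that the vertex \(u\) is selected by the process, so the edges and vertices around \(u\) are no longer fresh. In particular, the fact that \(v\) has no closer neighbor biases the region near \(t\). To handle this, I would run the induction inside the layer-exposure framework of \Cref{subsubsec:layers}. The regions \(\mathcal{I}_\eps\) and \(\mathcal{A}_u\) relevant to \(u\) sit at distance \(\approx D_u^{\tau-2}\ll D_u\) from \(t\), so they lie in later layers that have not yet been uncovered. We only condition on what is known about \(u\), namely its position and weight, and apply \Cref{2_inner,2_annulus} to this unexposed region. A second subtlety is that \(\eta\) changes along the path, so \(\mathcal{A}\) and \(\eps\) must be redefined at each step. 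This is harmless because the bounds on \(\eta'\) are uniform: \(\eta'\) lies in a fixed compact subinterval of \(\bigl(\frac{\tau-2}{\tau-1},1\bigr)\).
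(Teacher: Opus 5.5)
Your proof follows the paper's route: \Cref{2_inner,2_annulus} force the chosen neighbor into $V_2^+(v,\eps)\cap V(\mathcal{A}_v)$, \Cref{heavytail} keeps $\eta$ away from $1$, inserting $\eta'\ge\frac{1-\delta}{\tau-1}$ into the next outer radius $D_u^{(1-\eta'+\eps)(\tau-1)}$ gives the exponent $(\tau-2)(1+\Oh{\eps})$, and dependencies are deferred to the layer argument, exactly as in the paper. Two repairs are needed: first, the annulus parameter must be taken small relative to the fixed parameter of \Cref{heavytail}, because with a single $\eps$ the bound $\eta\le 1-\eps$ only gives $\delta\le(\zeta_2+1)/2$, which is not small; second, the paper orders all first-phase layers before the second-phase ones, so the first-phase-layer vertices lying inside $\mathcal{I}_\eps$ and $\mathcal{A}_u$ are already exposed when $u$ is processed, which means only the $V_2$ part of these regions (containing the good neighbors) is fresh, and the inner-circle and bad-neighbor bounds must go through the monotonicity of \Cref{lem:correlation}, which is how the paper handles it.
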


\subsubsection{Proof of \Cref{thm:main}}\label{subsubsec:main_thm}
We now explain how the preceding ingredients in \Cref{subsubsec:layers,subsubsec:where_neighbor} imply that for all initial choices of \(s\) and \(t\), geometric routing succeeds with constant probability. The argument is achieved in three steps, analyzing the start phase, the main part, and the end phase.

Before the formal proof, we clarify the successful outcome that marks the completion of each phase and serves as the starting condition for the next:
\begin{enumerate}
    \item \textbf{Start phase:} Starting from an arbitrary source \(s\), the goal is to show that the path successfully initiates its journey. This means it either enters \(V_{< D_0}\) directly or finds a suitable starting vertex \(u_1 \in \overline{V}(w_0, D_0)\) to initiate the main part.
	\item \textbf{Main part:} Starting from \(u_1\), the goal is to show that the path successfully navigates the main  part and arrives at a vertex \(u_{\ell}\) that has in expectation \(\Omega(1)\) neighbors in \(V_{< D_0}\). This is precisely \Cref{sim_traj_lemma}.
	\item \textbf{End phase:} The goal is to show that the path successfully reaches the target t once it is positioned to enter \(V_{<D_0}\). This can happen either after traversing the main part to reach a vertex \(u_{\ell}\) as mentioned, or by finding such a neighbor directly during the start phase.
\end{enumerate}

Throughout the process, each step succeeds with a non-zero constant probability. However, these probabilities are not large enough to allow a union bound over all error events. Instead, we compute probabilities conditioned on the success of the previous step.

\MainTheorem*
\begin{proof}
Let us choose large enough constants \(w_0 \geq w_1(\eps_1)\) and \(D_0 \geq D_1(\eps_1)\) such that the preconditions of  \Cref{sim_traj_lemma} are met.

\proofsubparagraph{Start phase:}
First, we analyze the initiation of the routing process. We will show that the path successfully begins its journey with constant probability. More formally, we define the successful start event:

Let \(\mathcal{E}_s\) be the event that the geometric path starting at \(s\) eventually visits a vertex \(v\) that satisfies at least one of the following conditions:
\begin{itemize}
    \item \(v\) is in the end zone, i.e.\ \(D_v < D_0\); or
    \item \(v\) has \(\Om{1}\) expected neighbors in \(V_{< D_0}\); or
    \item \(v\) is a suitable starting vertex for the main part, i.e.\ \(v \in \overline{V}(w_0, D_0)\).
\end{itemize}
We now prove that \(\Prob{\mathcal{E}_s} = \Om{1}\) by analyzing all possible initial scenarios for the source \(s\).
\begin{enumerate}
    \item Case \(D_s < D_0\): The event \(\mathcal{E}_s\) is immediately satisfied with probability \(1\) by taking \(v=s\).  In this case, the region \(V_{< D_s}\) (in fact the whole graph except \(s\) and \(t\)) remains unrevealed.
    \item Case \(D_s \geq D_0\): We analyze the marginal expectation \(\mathbb{E}_{< D_0}[N(s) \cap V_{<D_0}]\) after fixing \(s\) and before uncovering \(V_{<D_0}\).
    \begin{itemize}
        \item[(1)] \(\mathbb{E}_{< D_0}[N(s) \cap V_{<D_0}] \geq \frac{1}{2}\): With probability \(\Om{1}\), \(s\) has at least one neighbor in \(V_{< D_0}\). If so, the best neighbor (closest to \(t\)) will be in \(V_{< D_0}\), and \(\mathcal{E}_s\) is again satisfied by taking \(v=s\). In this case, the routing process is ready to enter the end phase, with \(V_{< D_0}\) remains unrevealed.
        \item[(2)] \(\mathbb{E}_{< D_0}[N(s) \cap V_{<D_0}] <\frac{1}{2}\): With probability \(\Om{1}\), \(s\) has no neighbor in \(V_{< D_0}\). Conditioned on this, we consider the event where \(s\) has a unique neighbor \(s'\) in \(V_{\geq D_0}\), and such that \(w_{s'} \geq w_0\). Note that if this holds, \(s'\) is the best neighbor of \(v\), and  \(\mathcal{E}_s\) is satisfied by taking \(v = s'\).
        
        To see this happens with probability  \(\Om{1}\), note that \(\deg_{V_{\geq D_0}}(s) = \abs{N(s) \cap V_{\geq D_0}}\) is Poisson binomially distributed with expectation \(\Th{w_s} = \Th{1}\). Therefore, with probability \(\Om{1}\), \(\abs{N(s) \cap V_{\geq D_0}} =1\), there is a unique \(s' \in N(s) \cap V_{\geq D_0}\).

Moreover, the degree distribution of a uniformly random neighbor of \(s\) follows a power-law with parameter \(\tau-1\). Therefore, with probability \(\Om{1}\), the unique \(s'\) has weight \(w_{s'} \geq w_0\).
    \end{itemize}
\end{enumerate}
Since every scenario leads to the event \(\mathcal{E}_s\) with at least a constant probability, we conclude that \(\Prob{\mathcal{E}_s} = \Om{1}\).

\proofsubparagraph{Main part of routing process:}
Our goal is to show that, conditioned on \(\mathcal{E}_s\), the geometric path eventually enters \(V_{< D_0}\). In particular, we focus on the case where the start phase results in a vertex \(u_1 \in \overline{V}(w_0, D_0)\), and show that the routing proceeds to a vertex \(u_{\ell}\) which we expect to have neighbors in \(V_{< D_0}\). More formally, we consider the event
\[
\mathcal{E}_m : \text{the routing on } G[V_{\geq D_0}] \text{ finds } u_{\ell} \text{ with } \mathbb{E}_{<D_0}[\abs{N(u_{\ell}) \cap V_{<D_0}}] = \Om{1}.
\]
By \Cref{sim_traj_lemma}, we have \(\Prob{\mathcal{E}_m \mid \exists\, u_1 \in \overline{V}(w_0, D_0)} = \Om{1}\).

\proofsubparagraph{End phase:}
We now analyze the end phase, conditioned on the success of the prior stages. This means we condition on \(\mathcal{E}_s\) holding, and additionally on \(\mathcal{E}_m\) holding in the case where the geometric path entered the main part. Then, we either have \(D_s < D_0\), or the routing reached a vertex \(v\) such that \(\mathbb{E}_{<D_0}[\abs{N(v) \cap V_{<D_0}}] = \Om{1}\).

To show that the routing successfully reaches \(t\) with probability \( \Om{1}\), we first consider the latter case. Since the vertices in the region \(V_{< D_0}\) remains unrevealed, by properties of Poisson point process, \(|N(v) \cap V_{<D_0}|\) is Poisson binomially distributed with expectation \(\Om{1}\). Hence, following the same logic as the start phase, with probability \(\Om{1}\), \(v\) has a unique neighbor \(v' \in V_{<D_0}\) with a sufficiently large constant weight. Since \(w_{v'}\) is a sufficiently large constant and \(D_{v'} < D_0\), \(v'\) is then connected to \(t\) with probability \(\Om{1}\). The independence of these two events means the total probability of finding \(t\) via the two-hop path through \(v'\) is \(\Om{1}\).

In the case where \(D_s < D_0\), the region \(V_{<D_s}\) remains unrevealed. Again, \(\abs{N(s) \cap V_{<D_s}}\) is Poisson binomially distributed with expectation \(\Om{1}\). By repeating the above argument with \(D_0\) replaced by \(D_s\), it follows that the probability of finding \(t\) via an analogous two-hop path from \(s\) is also \(\Om{1}\).

In summary, we have shown that each phase of the routing succeeds with at least constant probability conditioned on previous stages being successful. Hence, geometric routing succeeds with a probability of \(\Om{1}\).
\end{proof}



\bibliography{geometric_routing_arXiv}

\appendix
\section{Proofs of \Cref{subsubsec:where_neighbor}}\label{appendix:where_neighbor}

\FirstPhaseLemma*
\begin{proof} Let \(0<\eps\leq\eps_1\) be such that \(\gamma(\eps) >1\). In the following, \(c_1\), \(c_2\) refer to the constants appearing in the definition of \(p_{uv}\), as given in \eqref{eq:EP1} and \eqref{eq:EP2}.

\proofsubparagraph{Good neighbors:} To prove the first statement, we want to lower-bound the expected number of neighbors of \(v\) in \(V^+(v,\eps)\). To do so, we count the number of vertices lying in an annulus around \(v\)  whose inner and outer radii are \(\Th{R_v(\eps)}\), located in the same direction from \(v\) as \(t\), and whose weights are high enough to connect to \(v\) with constant probability.
To be more precise, let us define \(A(v,\eps)\) to be:
\[
A(v,\eps) \coloneqq
\left\{
   u \in V \ \ \middle|\,
  \begin{aligned}
    & \text{(1)}: w_u \geq \Max*{ \frac{R_v(\eps)^d}{c_1 w_v}}{\frac{R_v(\eps)^d}{w_v}};\\
    & \text{(2)}: \frac{R_v(\eps)}{2} \leq \norm{x_u - x_v} \leq R_v(\eps);\\
    & \text{(3)}: D_u \leq D_v - \frac{R_v(\eps)}{2}.
  \end{aligned}
\right\}
\]
Condition (1) and Condition (2) guarantee that \(c_1 w_u w_v \geq R_v(\eps)^d \geq \norm{x_u - x_v}^d\). Thus for \(u\in A(v,\eps)\) we have \(p_{uv} = \Th{1}\), i.e.\ \(u\in N(v)\) with constant probability.
 Moreover, Condition (1) also implies \(w_u \geq w_v^{1+\gamma(\eps)}/w_v \geq w_v^{\gamma(\zeta_1 \eps)}\). Together with Condition (3), we have \( A(v,\eps) \subseteq V^+(v,\eps)\).

Now, it suffices to lower bound \(\E{\abs{ A(v,\eps) }}\), as it differs from \(\E{\abs{ N(v) \cap V^+(v,\eps) }}\) by at most a constant factor. To show the expected number of vertices in \(A(v,\eps)\) is large, we first analyze the volume of its underlying region, namely, the intersection of the annulus defined by (2) and the ball defined by (3), under the \(\infty\)-norm. Note that \(D_v \geq R_v(\eps_1) \geq R_v(\eps)\), a simple geometric argument yields
\[
\textrm{Vol}\,\left( 
\left\{
x\in\mathcal{X} \,\middle|\, 
\begin{aligned}
    & \frac{R_v(\eps)}{2}\leq \norm{x-x_v} \leq R_v(\eps);\\
    & \norm{x-x_t} \leq D_v- \frac{R_v(\eps)}{2}.
  \end{aligned}
\right\}
 \right) \geq \left(\frac{R_v(\eps)}{2}\right)^d.
\]
Hence, the volume of the underlying region of \( A(v,\eps)\) is \(\Th{R_v(\eps)^d}\), and
\begin{equation}\label{eq:1st_good}
\begin{aligned}
\E{\,\abs{ A(v,\eps) }\,}
&= \Th*{R_v(\eps)^d \cdot \int_{\Th{R_v(\eps)^d/w_v}}^{\infty} \Theta(w^{-\tau}) dw }\\
&= \Th*{ R_v(\eps)^{d(2-\tau)} w_v^{\tau-1} }\\
&= \Th*{ w_v^{\frac{\tau-1-\eps}{\tau-2}(2-\tau)} w_v^{\tau-1} }\\
&= \Th[\big]{w_v^{\eps}}.
\end{aligned}
\end{equation}
This yields the first result.

\proofsubparagraph{Bad neighbors:} For the second statement, we want to upper-bound the expected number of bad neighbors of \(v\).
Notice that for \(u \in V^-(v,\eps)\), we have
\[\norm{x_u - x_v} \geq \abs[\Big]{\norm{x_v - x_t} - \norm{x_u - x_t}} = D_v - D_u \geq \frac{R_v(\eps)}{2}.\]

Now, assuming that \(w_v \geq w_1(\eps) \coloneqq c \cdot e^{d/\eps}\), for \(c\) chosen to be large enough.
For the case \(\alpha = \infty\), this guarantees that \(w_v \geq (c_2 2^d)^{\frac{\tau-2}{(\zeta_1-1)\eps}}\). Therefore, the condition \(w_u \leq w_v^{\gamma(\zeta_1 \cdot \eps)}\) implies that
\begin{align*} c_2 w_u w_v &\leq c_2 \cdot w_v^{1+\gamma(\zeta_1 \eps)} \leq 2^{-d} w_v^{1+\gamma(\zeta_1\eps)+\frac{(\zeta_1-1)\eps}{\tau-2}}\\
 &= 2^{-d} w_v^{1+\gamma(\eps)} = (R_v(\eps)/2)^d \leq \norm{x_u - x_v}^d,
\end{align*}
where we use \(w_v \geq (c_2 2^d)^{\frac{\tau-2}{(\zeta_1-1)\eps}}\) in the second inequality.
Hence, \(v\) has deterministically no neighbor in \(V^-(v,\eps)\) when \(\alpha = \infty\).

For the case \(1 < \alpha < \infty\), given that \(w_v\geq w_1(\eps)\), by the same reasoning we have \(c_1 w_u w_v \leq c_2 w_u w_v \leq \norm{x_u - x_v}^d\). Then the connection probability of \(u\) and \(v\) is
\[
p_{uv} =
\Th*{
\Min*{1}{\frac{w_u w_v}{w_{\min}\cdot\norm{x_u - x_v}^d}}^\alpha
}= \Th*{ \left(\frac{w_uw_v}{\norm{x_u - x_v}^d}\right)^\alpha }.
\]
Since \(\norm{x_u - x_v} \geq R_v(\eps)/2\), the expected number of bad neighbors is upper-bounded by:
\begin{align*}
\E{\,\abs{N(v) \cap V^-(v,\eps) }\,}
&\leq
\int_{\frac{R_v(\eps)}{2}}^{n^{1/d}} \Th*{x^{d-1}}dx
\int_{w_{min} = 1}^{w_v^{\gamma(\zeta_1 \cdot \eps)}}\Th*{w^{-\tau}} \cdot \Th*{\left(\frac{w w_v}{x^d}\right)^{\alpha}}dw\\
&= \Oh*{
 w_v^{\alpha} \cdot w^{\alpha+1 -\tau} \bigg|_{1}^{w_v^{\gamma(\zeta_1\cdot\eps)}}\cdot
\int_{R_v(\eps)}^{n^{1/d}} x^{d-d\alpha-1}dx
}\\
&= \Oh*{w_v^{\alpha+ (\alpha+1-\tau)\cdot \frac{1-\zeta_1\cdot\eps}{\tau-2}} \cdot (R_v(\eps))^{d-d\alpha}}\\
&= \Oh*{w_v^{\alpha+ (\alpha-1+2-\tau)\frac{1-\zeta_1\cdot\eps}{\tau-2}} \cdot w_v^{(1+\frac{1-\eps}{\tau-2})(1-\alpha)}}\\
&= \Oh*{w_v^{\frac{(\alpha-1)\eps-(\alpha+1-\tau)\zeta_1\eps}{\tau-2}}}
\end{align*}
Notice that \(\frac{(\alpha-1)\eps-(\alpha+1-\tau)\zeta_1\eps}{\tau-2} = -\Om*{\eps}\), as \(\alpha > \tau-1\) and \(\zeta_1 > \frac{\alpha-1}{\alpha+1-\tau}\) by definition. Hence, the second claim follows.
\end{proof}

Before proving the lemmata for the second phase, we first prove a technical lemma that characterizes how the maximum weight scales with the area of a region in GIRGs. This will allow us to estimate the distribution of heavy vertices in the annulus and the inner circle.

\begin{lemma}\label{maxweight}
Let \(\mathcal{A} \subseteq \mathcal{X}\) be a region of area \(A\), and \(Z_x \coloneqq \abs*{\{v \in V(\mathcal{A}) | w_v > x\}}\). Suppose \(A>1\). For any \(\eps>0\), we have
\begin{enumerate}[(i)]
	\item \(\Prob[\Big]{ Z_{A^{1/(\tau-1)-\eps}} \geq \Om{A^{(\tau-1)\eps}} } \geq 1- e^{-A^{\Om{\eps}}}\);
	\item \(\Prob[\Big]{ Z_{A^{1/(\tau-1)+\eps}} = 0 } \geq 1- A^{-\Om{\eps}}\),
\end{enumerate}
where the hidden constant in \(\Om{A^{(\tau-1)\eps}}\) is chosen sufficiently small. And all hidden constants are uniform over all \(A\).
\end{lemma}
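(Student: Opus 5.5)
Since the vertex set is a Poisson point process of intensity \(1\) and the weights are i.i.d.\ and independent of the positions, the thinned process of vertices in \(\mathcal{A}\) with weight exceeding \(x\) is again a Poisson point process. Hence \(Z_x\) is Poisson distributed with mean
\[
\mu(x) = A\cdot \Prob{w > x} = A\int_x^\infty \Th{w^{-\tau}}\,dw = \Th*{A\, x^{1-\tau}},
\]
uniformly in \(x \geq 1\), with constants depending only on the hidden constants in the density \(f\). The whole proof therefore reduces to evaluating \(\mu\) at the two thresholds and applying standard Poisson tail bounds.

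For (i), set \(x = A^{1/(\tau-1)-\eps}\). We may assume \(\eps < 1/(\tau-1)\); otherwise \(x \leq 1\), every vertex counts, and \(Z_x = \abs{V(\mathcal{A})}\) is Poisson with mean \(A \geq A^{(\tau-1)\eps}\) restricted appropriately (this degenerate case is handled the same way, with \(\mu = A\) and the target \(\Om{A^{(\tau-1)\eps}}\) replaced by a smaller quantity, or simply by restricting to small \(\eps\) as the paper does). In the main case,
\[
\mu = \Th*{A\cdot A^{-1+(\tau-1)\eps}} = \Th*{A^{(\tau-1)\eps}}, \qquad \text{so } \mu \geq c\,A^{(\tau-1)\eps}.
\]
The Chernoff bound for Poisson variables gives \(\Prob{Z_x \leq \mu/2} \leq e^{-\mu/8}\). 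This yields \(Z_x \geq \tfrac{c}{2} A^{(\tau-1)\eps}\) with probability at least \(1-e^{-\Om{A^{(\tau-1)\eps}}} = 1 - e^{-A^{\Om{\eps}}}\). Choosing the hidden constant in the statement to be at most \(c/2\) gives (i).

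For (ii), set \(x = A^{1/(\tau-1)+\eps} \geq 1\), which holds since \(A > 1\). Then
\[
\mu = \Th*{A^{-(\tau-1)\eps}},
\]
and \(\Prob{Z_x \neq 0} = 1 - e^{-\mu} \leq \mu = \Oh{A^{-(\tau-1)\eps}}\). Since \(\tau - 1 > 1\), this is \(A^{-\Om{\eps}}\), proving (ii).

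All constants come only from the \(\Th{w^{-\tau}}\) density and the Chernoff constants, so they are uniform in \(A\). The only delicate point is this uniformity, together with the boundary case in which the threshold in (i) drops below the minimal weight \(1\). Neither is a real obstacle.
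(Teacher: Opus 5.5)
Your proof is correct and follows the paper's argument: you compute the expected number of vertices above each threshold, then use a Chernoff lower-tail bound for (i) and a first-moment bound for (ii). The one difference is that you obtain $Z_x \sim \mathrm{Poisson}(\Theta(A x^{1-\tau}))$ directly by thinning the marked Poisson process. The paper instead first concentrates $N=\abs{V(\mathcal{A})}$ around $A$ and then applies a binomial tail, so your route is slightly cleaner and avoids the extra $2e^{-A/12}$ error term.

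Your aside on the degenerate case needs sharpening. For $\eps>1/(\tau-1)$, part (i) is false, since $Z_x\le \abs{V(\mathcal{A})}\approx A \ll A^{(\tau-1)\eps}$. So the case is not "handled the same way": it has to be excluded by assuming $\eps<1/(\tau-1)$, which is consistent with all applications in the paper, where $\eps$ is small.
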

\begin{proof}
Let \(N = \abs{V(\mathcal{A})}\) be the number of vertices in \(\mathcal{A}\). We know that \(N \sim \textrm{Poisson}(A)\), by a Chernoff's bound we have
\[
\Prob*{ \abs{ N-A } \geq \frac{A}{2}} \leq 2e^{-\frac{A}{12}}.
\]
Recall that the weights of the vertices follow a power-law with exponent \(\tau\). Therefore, \(Z_x \sim \textrm{Bin}(N,p_x)\), with \(p_x = \Th{w^{1-\tau}}\). For the first statement, plug in \(x = \Th{A^{1/(\tau-1)-\eps}}\), for simplicity we drop the Theta notation in \(Z_x\) and write \(Z_{A^{1/(\tau-1)-\eps}}\). 
Notice that we have
\[
\E*{Z_{A^{1/(\tau-1)-\eps}}} = N \cdot \Th*{A^{(\frac{1}{\tau-1}-\eps)(1-\tau)}} = \Th*{N A^{-1+(\tau-1)\eps}}.
\]
As we have shown, with probability at least \(1 - 2e^{\frac{-A}{12}}\), \(N\) and \(A\) are of the same order.
Therefore, with the same probability, the number of vertices in \(\mathcal{A}\) with weight at least \(x = \Th{A^{1/(\tau-1)-\eps}}\) is binomially distributed with expectation \(\mu = \Th{A^{(\tau-1)\eps}}\). By a lower-tail bound for the binomial distribution, we have
\[
\Prob*{ Z_{A^{1/(\tau-1)-\eps}} \leq \frac{\mu}{2} } \leq e^{-\frac{\mu}{8}}.
\]
Note that the exponent of R.H.S. is \( \tfrac{\mu}{8} = -\Th{A^{\Om{\eps}}}\), which can be simplified to \(-A^{\Om{\eps}}\) for \(A>1\).
Hence, for sufficiently small constant \(\delta>0\), we have
\[
\Prob*{ Z_{A^{1/(\tau-1)-\eps}} \geq \delta A^{(\tau-1)\eps} } \geq 1- e^{-A^{\Om{\eps}}}.
\]
For the second statement, plug in \(x = \Th{A^{1/(\tau-1)+\eps}}\), again we simply write \(Z_{A^{1/(\tau-1)+\eps}}\) for \(Z_x\) in this case. Analogously, with probability at least \(1- e^{-A^{\Om{1}}}\), \(Z_{A^{1/(\tau-1)+\eps}}\) is binomially distributed with expectation \(\Th{A^{-(\tau-1)\eps}} = A^{-\Om{\eps}}\).
By Markov’s inequality, the probability that there exists a vertex with weight heavier than \(\Th{A^{1/(\tau-1)+\eps}}\) in \(\mathcal{A}\) is
\[
\Prob*{ Z_{A^{1/(\tau-1)+\eps}} \geq 1} \leq A^{-\Om{\eps}}.
\]
Combining the above, the error probability is bounded by \(e^{-A^{\Om{1}}} + A^{-\Om{\eps}} \leq A^{-\Om{\eps}}\). Hence,
\[\Prob*{Z_{A^{1/(\tau-1)+\eps}} = 0 } \geq 1- A^{-\Om{\eps}}.\]
\end{proof}

\HeavyTailLemma*
\begin{proof}
For \(i\geq 1\), let \(B_i\) be an annulus centered at \(t\) with radii \(2^{i-1} D_0\) and \(2^{i} D_0\). Notice that for any \(v \in B_i\), \(w_v \leq \Th*{ (2^{i-1}D_0)^{d(1-\eps)}   }\) implies \(w_v \leq \Th*{D_v^{d(1-\eps)}}\). Moreover, we have \(\textrm{Vol}(B_i) = \Th*{(2^{i-1}D_0)^{d}}\). By \Cref{maxweight}, the probability that there exist a vertex \(v \in B_i\) with weight \(w_v > \Th*{(2^{i-1}D_0)^{\frac{d}{\tau-1}+\eps}}\) is upper bounded by \(\textrm{Vol}(B_i)^{-\Om{\eps}} \). Since \(\tau>2\), we have \( \frac{d}{\tau-1}+\eps < d(1-\eps)\), for \(\eps>0\) sufficiently small. Hence
\begin{align*}
\Prob*{\exists\,v\in B_i\,,\,w_v > \Th*{(2^{i-1}D_0)^{d(1-\eps)}}} &\leq \Prob*{\exists\,v\in B_i\,,\,w_v > \Th*{(2^{i-1}D_0)^{\frac{d}{\tau-1}+\eps}}}\\ &\leq (2^{(i-1)}D_0)^{-d\Om{\eps}},
\end{align*}
where the hidden constants are uniform over all \(i\).
Take a union bound over all possible \(i\), the error probability is dominated by the first term, and thus
\begin{align*}
\sum_{i=1}^{\frac{1}{d}\log n}\Prob*{\exists\,v\in B_i\,,\,w_v > \Th*{(2^{i-1}D_0)^{d(1-\eps)}}}
&\leq \sum_{i=1}^{\frac{1}{d}\log n} (2^{(i-1)}D_0)^{-d\Om{\eps}} \leq D_0^{-\Om{\eps}}.
\end{align*}
Hence,
\[
\Prob*{\forall\,v\in V_{\geq D_0}\,,\,w_v \leq \Th*{D_v^{d(1-\eps)}}} \geq 1- D_0^{-\Om{\eps}}.
\]
\end{proof}

\InnerCircleLemma*
\begin{proof}
In the following, \(c_1\), \(c_2\) refer to the constants appearing in the definition of \(p_{uv}\), as given in \eqref{eq:EP1} and \eqref{eq:EP2}. Let \(w_v = c_v D_v^{d\eta}\) for some constant \(c_v\). Define
\[
Z_I \coloneqq \abs[\big]{ \SetB{u \in V(\mathcal{I}_{\eps})}{w_u \geq D_v^{d(1-\eta)-\eps'} }},
\]
where \(\eps'\) is any small constant such that \(\eps'> d\eps\).
Note that \(D_v^{d(1-\eta)-\eps'} = \Th{(r_I^d)^{1/(\tau-1)-(\eps'-d\eps)}}\), where \(\eps'-d\eps>0\) by assumption. Hence, by \Cref{maxweight} we have
\begin{equation}\label{eq_incircle}
\Prob*{Z_{I} =0} \leq 1- r_I^{-d\Om{\eps}} = 1 - D_v^{-\Om{\eps}}.
\end{equation}
Therefore, with probability at least \(1 - D_v^{-\Om{\eps}}\), we have \(w_u \leq D_v^{d(1-\eta)-\eps'}\) for all \(u \in V(\mathcal{I}_{\eps})\).
Note that for any vertex \(u\) in the inner circle \(\mathcal{I}_{\eps}\) , we have
\[\norm{x_u - x_v} \geq \abs{ \norm{x_v - x_t} - \norm{x_u - x_t} } = D_v - r_I > D_v/2.\]

For the case \(\alpha = \infty\), let \(u \in V(\mathcal{I}_{\eps})\) be such that \(w_u \leq D_v^{d(1-\eta)-\eps'}\). Assuming that \(D_v \geq D_1(\eps) \coloneqq c\cdot e^{d/\eps}\) for \(c\) large enough, we can assume \(D_v \geq (c_2c_v 2^d)^{1/\eps'}\). Then
\[
c_2 w_u w_v \leq c_2 D_v^{d(1-\eta)-\eps'} \cdot c_v D_v^{\eta} \leq (D_v/2)^d \leq \norm{x_u - x_v}^d,
\]
Thus with probability \(1 - D_v^{-\Om{\eps}}\), \(v\) has no neighbors in the circle \(\mathcal{I}_{\eps}\).

For the case \(1 < \alpha < \infty\), by the same assumption we have \(c_1 w_u w_v \leq c_2 w_u w_v \leq \norm{x_u-x_v}^d\) when \(D_v\) is large enough. Then the expected number of neighbors of \(v\) in \(\mathcal{I}_{\eps}\) with weight at most \(D_v^{d(1-\eta)-\eps'}\) is upper bounded by
\begin{align*}
&\Th{r_I^d} \cdot \int_{1}^{D_v^{d(1-\eta)-\eps'}} \Th*{w^{-\tau}} \cdot \Oh*{\left(\frac{w w_v}{(D_v/2)^d}\right)^{\alpha}} dw\\
&= \Th*{D_v^{d(1-\eta-\eps)(\tau-1)-d\alpha+ d\alpha \eta} \cdot w^{\alpha+1-\tau}\bigg|_{1}^{D_v^{d(1-\eta)-\eps'}} }\\
&= \Oh*{D_v^{d(1-\eta-\eps)(\tau-1) - d\alpha(1-\eta) + (\alpha+1-\tau)(d(1-\eta)-\eps')} }\\
&= \Oh*{ D_v^{-d(\tau-1)\eps -(\alpha+1-\tau)\eps'} } = D_v^{-\Om{\eps}}
\end{align*}
By Markov's inequality, \(v\) has no such neighbors with probability \(1 - D_v^{-\Om{\eps}}\). Together with \cref{eq_incircle}, we conclude that with probability \(1 - D_v^{-\Om{\eps}}\), \(v\) has no neighbors in the inner circle \(\mathcal{I}_{\eps}\).
\end{proof}

\AnnulusLemma*
\begin{proof}
In below, \(c_1\), \(c_2\) refer to the constants appearing in the definition of \(p_{uv}\), as given in \eqref{eq:EP1} and \eqref{eq:EP2}. Let \(w_v = c_v D_v^{d\eta}\) for some constant \(c_v\).

\proofsubparagraph{Good neighbors:} 
Let us consider the set
\[
W^+ = \SetB*{u\in V(\mathcal{A}_v)}{w_u \geq \Om*{D_v^{d(1-\eta)}}}
\]
Note that for \(u\in V(\mathcal{A}_v)\), we have \(D_u \leq r_v = D_v^{(1-\eta+\eps)(\tau-1)}\) and thus
\[
w_u \geq \Om*{D_v^{d(1-\eta)}} \geq \Om[\Big]{D_u^{\frac{d(1-\eta-\zeta_2 \eps)}{(\tau-1)(1-\eta+\eps)}}}.
\]
Therefore, \(W^+ \subseteq V_2^+(v,\eps) \cap V(\mathcal{A}_v)\) given that the hidden constant in \(\Om{D_v^{d(1-\eta)}}\) is chosen large enough. Hence, it suffices to show that \(N(V) \cap W^+ \neq \emptyset\). In order to do so, let \(Z_{v^+} \coloneqq \abs{W^+}\). Note that
\[\textrm{Vol}(\mathcal{A}_v) = r_v^d-r_I^d = \Om{r_v^d} = \Om{D_v^{d(1-\eta+\eps)(\tau-1)}}.\]
By \Cref{maxweight}, we have
\[\Prob*{ Z_{v^+} \geq \Om*{D_v^{\Om*{\eps}}}} \geq 1 - e^{-D_v^{\Om{\eps}}},\]
as \(\Om{D_v^{(1-\eta+\eps)(\tau-1)^2\eps}} = \Om{D_v^{\Om{\eps}}}\).

Next, suppose the hidden constant in the definition of \(W^+\) is large enough such that \(w_u \geq \frac{2^d}{3^d c_1 c_v} D_v^{d(1-\eta)}\). Moreover, notice that for \(u \in V(\mathcal{A}_v)\), we have \(\norm{x_u - x_v} \leq D_v + D_u \leq D_v + r_v\leq 3D_v/2\).
Therefore,
\[
c_1 w_u w_v \geq c_1 \frac{3^d D_v^{d(1-\eta)}}{2^d c_1 c_v} c_v D_v^{d\eta} = (3D_v/2)^d \geq \norm{x_u - x_v}^d,
\]
showing that \(p_{uv} = \Th{1}\).

To sum up, as the number of such heavy \(u\) is \(\Om{D_v^{\Om{\eps}}} = D_v^{\Om{\eps}}\) for \(D_v>1\), with probability \(1 - e^{-D_v^{\Om{\eps}}}\).
Since each of these vertices connect to \(v\) with a constant probability \(p_{uv}\), thus with probability at least \(1 - e^{-D_v^{\Om{\eps}}}\), we have
\[
\Prob*{N(V) \cap W^+ \neq \emptyset} \geq 1 - (1-p_{uv})^{{Z_{v^+}}} \geq 1 - e^{-D_v^{\Om{\eps}}}.
\]
Hence, \(\Prob*{N(V)\cap V_2^+(\eps) \cap V(\mathcal{A}_{v})  \neq \emptyset } \geq 1 - e^{-D_v^{\Om{\eps}}}\).

\proofsubparagraph{Bad neighbors:} Suppose \(u \in V_2^-(v,\eps) \cap V(\mathcal{A}_{v})\) is such that \(w_{u}\leq D_u^{\frac{d(1-\eta-\zeta_2\eps)}{(\tau-1)(1-\eta+\eps)}}\).  Moreover, we have \(\norm{x_{u}-x_v} \geq D_v - r_v \geq D_v/2\) and \(D_u \leq D_v^{(\tau-1)(1-\eta+\eps)}\).

For \(\alpha=\infty\), assuming that \(D_v \geq D_1(\eps) \coloneqq c\cdot e^{d/\eps}\) for \(c\) large enough, we can assume \(D_v \geq (c_2c_v 2^d)^{\frac{1}{d\eps}}\). Note that \(\zeta_2=1\), so
\[
c_2 w_{u} w_v \leq c_2 D_u^{\frac{d(1-\eta-\zeta_2\eps)}{(\tau-1)(1-\eta+\eps)}} \cdot c_v D_v^{d\eta} \leq c_2c_v D_v^{d(1-\eps)} \leq (D_v/2)^d \leq \norm{x_u-x_v}^d.
\]
Thus, \(u \) is deterministically not a neighbor of \(v\).

For \(1< \alpha<\infty\), by the same assumption \(D_v\geq D_1(\eps)\) and also \(\zeta_2\geq 1\), we can show that \(c_1 w_{u} w_v\leq c_2 w_{u} w_v\leq \norm{x_u-x_v}^d\). Thus, \(p_{uv} = \Oh*{\left( \frac{w_{u}w_v}{\norm{x_{u}-x_v}^d} \right)^\alpha}\).
Then the expected number of \(u\in N(v) \cap V(\mathcal{A}_{v})\) such that \(w_{u}\leq D_u^{\frac{d(1-\eta-\zeta_2\eps)}{(\tau-1)(1-\eta+\eps)}} \leq D_v^{d(1-\eta-\zeta_2\eps)}\) is upper bounded by
\begin{align*}
&\Th{r_v^d} \cdot \int_{1}^{D_v^{d(1-\eta-\zeta_2\eps)}} \Th*{w^{-\tau}} \cdot \Oh*{\left(\frac{w w_v}{(D_v/2)^d}\right)^{\alpha}} dw\\
&= \Oh*{D_v^{d(1-\eta+\eps)(\tau-1)-d\alpha+ d\alpha \eta} \cdot w^{\alpha+1-\tau}\bigg|_{1}^{D_v^{d(1-\eta-\zeta_2\eps)}} }\\
&= \Oh*{D_v^{d(1-\eta+\eps)(\tau-1) - d\alpha(1-\eta) + d(\alpha+1-\tau)(1-\eta-\zeta_2\eps)} }\\
&= \Oh*{ D_v^{d(\tau-1)\eps -d(\alpha+1-\tau)\zeta_2\eps} } = D_v^{-\Om{\eps}}.
\end{align*}
The last equality holds as \(\zeta_2 > \frac{\tau-1}{\alpha+1-\tau}\) by construction. Moreover, for any \(C>0\), we can simplify \(C \cdot D_v^{\Om{\eps}}\) into \(D_v^{\Om{\eps}}\) when \(D_v>1\). Finally, by Markov's inequality,
\[\Prob*{N(V)\cap V_2^-(v,\eps) \cap V(\mathcal{A}_{v})  = \emptyset } \geq 1 - D_v^{-\Om{\eps}}.\]
\end{proof}

\section{The Full Layer Argument for \Cref{thm:length}}\label{appendix:geo_traj_thm}
Here we present the technical layer argument and prove \Cref{thm:length}, which bounds the length of the geometric path. The analysis is intricate and builds on the lemmata developed in \Cref{subsubsec:where_neighbor,appendix:where_neighbor}, which identify where suitable neighbors can be expected.

Our approach is to formalize the intuitive behavior of the geometric path by tracking the progress of the routing process with layers and analyzing the probabilistic guarantees at each stage.
The main technical ingredient is the full trajectory lemma, whose core is the layer construction from \cref{subsec:geo_layers}.
Established in \cref{subsec:trajectory_lemma}, this lemma is the full technical version of \Cref{sim_traj_lemma} stated in the main text, showing that with high probability the geometric path does not deviate significantly from its typical trajectory. In particular, we track the geometric path by showing that the routing protocol forces it to leave the current layer and enter a subsequent layer. Moreover, with sufficiently high probability, the routing does not fail in any layer. This trajectory lemma directly implies \Cref{thm:length} and will be used to prove \Cref{thm:main} in \Cref{subsubsec:main_thm}.

\subsection{Layers for geometric routing}\label{subsec:geo_layers}
Recall from discussion in \cref{subsec:typical_traj_geometric}, it typically takes only few steps for the source \(s\) to reach a vertex with constant weight \(w_0\), and similarly, for a vertex located within a ball of radius \(D_0\) around \(t\) to reach the target \(t\). These are referred to as the start and end phases of the routing, respectively. 

To formalize the above, for any pair \((w,D)\) of weight and distance, we define
\[
\overline{V}(w, D) \coloneqq \SetB{v \in V_1}{\text{(1) : } w_v \geq w; \text{(2) : } D_v \geq D} \cup \SetB{ v \in V_2 }{D_v \geq D }.
\]
Suppose both \(w\) and \(D\) are constants. Then the main part of the routing process takes place within the set \(\overline{V}(w, D)\) for some suitable values of \(w,D\). In particular, we choose \(w_0\) and \(D_0\) sufficiently large so that the lemmata in previous section can be applied to \(\overline{V}(w_0, D_0)\). Moreover, let \( V_{\leq D_0} \coloneqq \SetB{v\in V}{D_v \leq D_0}\) characterizes the end phase that will be treated separately. Our primary goal is to show that the routing survives this main part \(\overline{V}(w_0, D_0)\) with sufficiently high probability, and eventually reaches \(V_{\leq D_0}\) which is a small ball around target \(t\).

We now define the layers for geometric routing, the constructions are based on the weights for the first phase, and on distances for the second phase. Let us start by repeating some technical definitions. For all \(\eps>0\), we denote \(\gamma(\eps) \coloneqq \frac{1-\eps}{\tau-2}\). Moreover, \(\eps_1 = \eps_1(\alpha, \tau)>0\) is a fixed constant, chosen sufficiently small throughout the proof. It also classifies the two classes of vertices:
\[
V_1 \coloneqq \SetB{v\in V}{D_v \geq R_v(\eps_1)} \quad\text{and}\quad V_2 \coloneqq \SetB{v\in V}{D_v \leq R_v(\eps_1)}.
\]
Furthermore, we defined \(w_1(\eps) = \Oh{e^{d/\eps}}\), \(D_1(\eps) = \Oh{e^{d/\eps}}\), and the auxiliary constants \(\zeta_1, \zeta_2\) where
\[
\zeta_1(\alpha, \tau) \coloneqq
\begin{cases}
    \max\left\{\frac{2\alpha - 1}{2\alpha + 2 - 2\tau}, \frac{3}{2}\right\} & \text{if } 1 < \alpha < \infty,\\
    \frac{3}{2} & \text{if } \alpha = \infty.
\end{cases}
\]
\[
\zeta_2(\alpha, \tau) \coloneqq
\begin{cases}
    \max\left\{\frac{2\tau - 1}{2\alpha + 2 - 2\tau}, 1\right\} & \text{if } 1 < \alpha < \infty,\\
    1 & \text{if } \alpha = \infty.
\end{cases}
\]
The notions of good and bad neighbors for each phase are then defined as:
\[
\begin{aligned}
V_1^+(v,\eps) &\coloneqq \SetB*{u\in V}{w_u \geq w_v^{\gamma(\zeta_1\eps)}\,\text{ and }\, D_u \leq D_v - \frac{R_v(\eps)}{2}},\text{ and}\\
V_1^-(v,\eps) &\coloneqq \SetB*{u\in V}{w_u < w_v^{\gamma(\zeta_1\eps)}\,\text{ and }\, D_u \leq D_v - \frac{R_v(\eps)}{2}}.
\end{aligned}
\]
\[
\begin{aligned}
V_2^+(v,\eps) &\coloneqq \SetB*{ u\in V}{w_u\geq D_u^{\tfrac{d(1-\eta-\zeta_2\eps)}{(\tau-1)(1-\eta+\eps)}} = D_u^{\tfrac{d}{\tau-1}\left(1-\tfrac{(\zeta_2+1)\eps}{1-\eta+\eps}\right)}},\text{ and}\\
V_2^-(v,\eps) &\coloneqq \SetB*{ u\in V}{w_u < D_u^{\tfrac{d(1-\eta-\zeta_2\eps)}{(\tau-1)(1-\eta+\eps)}} = D_u^{\tfrac{d}{\tau-1}\left(1-\tfrac{(\zeta_2+1)\eps}{1-\eta+\eps}\right)}}.
\end{aligned}
\]

We analyze the structure of the geometric path in the main part by partitioning the set \(\overline{V}(w_0, D_0)\) into layers. The idea is to show that, with sufficiently high probability, there exists a node \(u_\ell\) on \(P\) which has neighbors in \(V_{\leq D_0}\), such that until reaching \(u_\ell\), the path \(P\) visits every layer at most once, except possibly the special layer \(A_{2,\star}\), which may be visited twice. The special layer \(A_{2,\star}\) is the first layer in the second phase that the path reaches when entering the second phase.

We define two classes of layers which correspond to the two phases: the layers \(A_{1,j}\) divide the area \(V_1 \cap \overline{V}(w_0, D_0)\) and are defined via weights, whereas the layers \(A_{2,j}\) divide the area \(V_2 \cap \overline{V}(w_0, D_0)\) and are defined via distances.

Assuming that \(f_0(n) = \oh{\log \log n}\), and recall that we set \(\eps_1 = \Th*{1}\) to be a constant. Then, let
\(\eps_2 \coloneqq (\log \log f_0(n))^{-1} = \oh{1}\) be a slowly decaying parameter, where we may assume \(\zeta_1\eps_2 < \eps_1\) and \(\zeta_2\eps_2 < \eps_1\). Finally, we set the landmarks that serve as thresholds for the layer definitions:
\[
w_0' \coloneqq w_0^{(\gamma(\zeta_1 \eps_1)^{f_0(n))})} \quad \text{and} \quad D_0' \coloneqq D_0^{(\gamma(\zeta_2\eps_1)^{f_0(n))})}.
\]
For later reference, we note that since \(\gamma(\zeta_i \eps_1) > 1\) for \(i=1,2\),
\begin{align*}
&w_0'^{\Om{\eps_2}} = w_0^{\Om*{\gamma(\zeta_1 \eps_1)^{f_0(n)}/ \log \log f_0(n)}} = \om*{w_0^{\Om{1}}},\\
&D_0'^{\Om{\varepsilon_2}} = D_0^{\Om*{\gamma(\zeta_2 \eps_1)^{f_0(n)}/ \log \log f_0(n)}} = \om*{D_0^{\Om{1}}}.
\end{align*}

\paragraph*{Layers by weight:} Define a sequence \(y_0 \coloneqq w_0 < y_1 < \cdots < y_j < \cdots\) of weights which grows doubly exponentially. More precisely, let
\[
y_{j+1} =
\begin{cases}
y_j^{\gamma(\zeta_1 \eps_1)} & \text{if } y_j < w'_0, \\
y_j^{\gamma(\zeta_1 \eps_2)} & \text{if } y_j \geq w'_0.
\end{cases}
\]
For the set \(V_1' := \left\{ v \in \overline{V}(w_0, D_0) \mid w_v^{1+\gamma(\zeta_1\eps_2)} \leq D_v^d \right\}\), partition the vertices in \(V_1'\) into the following layers:
\[
A_{1,j} \coloneqq \left\{ v \in V_1' \mid y_{j-1} \leq w_v < y_j \right\} \quad \forall j \geq 1.
\]
Note that by definition, \(V_1' \subset V_1\) and no vertex in \(V_1\) has weight larger than \(n^{(1+\gamma(\eps_1))^{-1}}\). Thus we only need to consider sets \(A_{1,j}\) such that
\(y_{j-1} \leq n^{(1+\gamma(\eps_1))^{-1}}\).
Furthermore, we introduce the additional layer that contains the remaining vertices of \((V_1 \cap \overline{V}(w_0, D_0))\setminus V_1'\):
\[
A_{1,\infty} \coloneqq \left\{ v \in \overline{V}(w_0,D_0) \,\middle|\, w_v^{1+\gamma(\eps_1)}\leq D_v^d \leq w_v^{1+\gamma(\zeta_1\eps_2)} \right\}.
\]

\paragraph*{Layers by distance:}
Define a sequence \(z_0 \coloneqq D_0 < z_1 < \cdots < z_j < \cdots\) of distances which grows doubly exponentially. More precisely, let
\[
z_{j+1} =
\begin{cases}
z_j^{\gamma(\eps_1)} & \text{if } y_j < D'_0, \\
z_j^{\gamma(\eps_2)} & \text{if } y_j \geq D'_0.
\end{cases}
\]
For the set \(V_2' := \left\{ v \in V_2 \cap \overline{V}(w_0, D_0) \mid D_v^{\gamma(\eps_2)} \leq n^{1/d}\right\}\), partition the vertices in \(V_2\) into the following layers:
\[
A_{2,j} \coloneqq \left\{ v \in V_2' \mid z_{j-1} \leq D_v < z_j \right\} \quad \forall j \geq 1.
\]
Note that we only need to consider sets \(A_{2,j}\) for which
\(z_j^{\gamma(\eps_2)} \leq n^{1/d}\).
Furthermore, we introduce the additional layer that contains the remaining vertices in \((V_2 \cap \overline{V}(w_0, D_0) )\setminus V_2'\):
\[
A_{2,\infty} \coloneqq \left\{ v \in V_2\cap\overline{V}(w_0,D_0) \mid D_v \geq n^{1/(d \gamma(\eps_2))} \right\}.
\]

\begin{remark*}
The exponent in the definition of \(y_j\) (resp. \(z_j\)) are chosen specifically such that the good neighbors in \(V_1^+(v,\eps)\) (resp.  \(V_2^+(v,\eps)\)) will leave the current layer containing \(v\). Moreover, the number of \(j\)'s such that \(y_j\) grows with \(\gamma(\zeta_1 \eps_1)\) (resp. \(z_j\) grows with \(\gamma(\eps_1)\)) in the exponent is \( \oh{\log \log n}\), which is a negligible fraction among all \(\Th*{\log \log n}\) layers.
\end{remark*}

\begin{remark*}
We set \(\eps_2 = \oh{1}\) in order to capture the tighter concentration of \(w_v^{(1-\zeta_1\eps_2)/(\tau-2)} \leq w_u \approx w_v^{1/(\tau-2)}\), as well as \(D_v^{(\tau-2)(1+\Oh{\eps_2})} \geq D_u \approx{D_v^{\tau-2}}\), when \(w_v\) and \(D_v\) are larger. Moreover, \(\eps_2\) is chosen to decay slow enough such that \( \Om{w_v^{\eps}} \geq \Om{1}\) and \( \Om{D_v^{\eps}} \geq \Om{1}\) are true for all layers. This is clearly true when \(\eps = \eps_1 =\Th{1}\), as well as for \(\eps=\eps_2\) since \(w_v\geq y_{j-1}\),  \(D_v \leq z_{j-1}\), and that \(\eps_2\) decay slower than the growth of \(y_j\) and \(z_j\).
\end{remark*}

\subsection{Trajectory lemma for the main part}\label{subsec:trajectory_lemma}
We will utilize the constructed layers to establish the trajectory lemma for geometric routing, which states that with sufficiently high probability, the routing will not fail in \(\overline{V}(w_0, D_0)\) and will traverse this set ultra-fast. 

Interestingly, a key phenomenon of the geometric path, not observed in the greedy path, is that the doubly exponential growth at rate \(1/(\tau-2)\) in weight, or decay at rate \(\tau-2\) in distance, is not guaranteed immediately after the phase transition. More precisely, let \(v\) be the first vertex of \(P\) in \(V_2\), and denote by \(A_{2,\star}\) the layer containing \(v\). Suppose that \(w_v = \Th{D_v^{d\eta}}\), then the hop from \(v\) to its next neighbor \(u\) is a mixture of weight increase and distance decrease, depending on the precise value of the random variable \(\eta\). As a result, \(u\) is likely to remain in the layer \(A_{2,\star}\). Nevertheless, this intermediate step places \(u\) in a good position, the ratio between \(w_u\) and \(D_u\) ensures doubly exponential decay in distance at rate \(\tau-2\) from \(u\) onward (see also \Cref{cor:2nd_phase_decay}). 

Before proving the trajectory lemma, we first state a technical lemma which is a consequence of the Poisson point process, that will help us handle dependent events.

\begin{lemma}\label{lem:correlation}
Let \(A_1, A_2 \subset \mathcal{X}\) and let \(v, u_1, \ldots, u_r \in \mathcal{X} \setminus (A_1 \cup A_2)\) be \(r + 1\) vertices, for some \(r \geq 1\). Then
\[
\Prob{N(v) \cap A_1 = \emptyset \mid N(u_1) \cap A_2 = \ldots = N(u_r) \cap A_2 = \emptyset} \geq \Prob{N(v) \cap A_1 = \emptyset}.
\]
\end{lemma}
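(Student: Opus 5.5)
We would prove \Cref{lem:correlation} via the Harris--FKG inequality for increasing events on a suitable product space. The key observation is that $A_1, A_2$ are regions disjoint from the positions of $v,u_1,\dots,u_r$, and that the randomness relevant to both events consists of two parts: the Poisson point process of vertices (positions and weights) inside $A_1\cup A_2$, and the independent edge indicators. We first condition on the positions and weights of $v,u_1,\dots,u_r$; these are fixed vertices, so conditioning leaves the Poisson process on the rest of $\mathcal{X}$ unchanged.

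\textbf{Disjoint case.} Consider first $A_1\cap A_2=\emptyset$. Then the event $\{N(v)\cap A_1=\emptyset\}$ depends only on the marked Poisson process restricted to $A_1$ and on the edges between $v$ and those points. The conditioning event depends only on the process restricted to $A_2$ and on the edges from $u_1,\dots,u_r$ to those points. By the independence of a Poisson point process on disjoint sets, together with the independence of edges, the two events are independent. Hence the inequality holds with equality.

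\textbf{General case.} For general $A_1,A_2$, we split into $B=A_1\cap A_2$, $A_1\setminus B$ and $A_2\setminus B$. By the disjoint case, the parts living on $A_1\setminus B$ and on $A_2\setminus B$ factor out, so it suffices to treat $A_1=A_2=B$. We then condition on the marked point configuration in $B$, a finite set of vertices with positions and weights. Given this configuration, both events are determined by the independent edge indicators. Let $X_{v,x}$ and $X_{u_i,x}$ denote the indicators of the edges from $v$ and $u_i$ to a point $x\in B$. These are independent Bernoulli variables, and $v$ is distinct from every $u_i$; if $v=u_i$ for some $i$, the conditional probability is $1$ and the claim is trivial. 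Thus, given the configuration, $\{N(v)\cap B=\emptyset\}$ depends only on the $X_{v,\cdot}$, the conditioning event depends only on the $X_{u_i,\cdot}$, and the two are conditionally independent.

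\textbf{Main obstacle.} The remaining step is to integrate over the configuration in $B$, and this is where the dependence enters. Both events are \emph{decreasing} in the point configuration: adding points can only create neighbors. The monotonicity is easiest to see by the standard coupling that attaches to each potential point independent uniform edge marks, which makes both events decreasing functions of a marked Poisson process. Conditionally on the configuration, $\Prob{N(v)\cap B=\emptyset\mid\cdot}=\prod_{x}(1-p_{vx})$ and the conditional probability of the other event is $\prod_{i,x}(1-p_{u_ix})$. Both are decreasing functions of the configuration. Harris--FKG for Poisson processes then gives $\E{fg}\ge \E{f}\E{g}$, and dividing by $\E{g}$ yields the claim. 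We expect this step to be the main technical point; the one thing to verify is that FKG applies to marked Poisson processes with decreasing functionals, which is standard. The alternative is a discretisation of $B$ into small cells with Bernoulli occupancy, where Harris's inequality on a product space applies directly, followed by a limit.
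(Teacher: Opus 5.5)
Your proof is correct and has the same skeleton as the paper's. Both split $A_1$ into $B=A_1\cap A_2$ and $A_1\setminus A_2$, handle the part disjoint from $A_2$ by independence of the Poisson process on disjoint regions, and argue that on the overlap the conditioning can only help.

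The difference is the tool used on the overlap. The paper justifies that step in one sentence: conditioning on the $u_i$ having no neighbours in $A_2$ lowers the density of vertices in $A_2$. You instead invoke Harris--FKG for the marked Poisson process, applied to the decreasing functionals $f=\prod_x(1-p_{vx})$ and $g=\prod_{i,x}(1-p_{u_ix})$.

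A smaller mean count alone would not suffice for the paper's sentence. The natural way to make it rigorous is Poisson thinning rather than FKG. Colour each point of the marked process in $B$ by its independent edge indicators to $v,u_1,\dots,u_r$. The event $F_B$ that no $u_i$ has a neighbour in $B$ only concerns colour classes containing some $u_i$-edge. Hence, given $F_B$, the neighbours of $v$ in $B$ form a Poisson process with intensity $p_{vx}\prod_i(1-p_{u_ix})\,d\mu$, where $\mu$ is the intensity measure of the marked process (positions and weights) on $B$. This gives
\[
\mathbb{P}\bigl[N(v)\cap B=\emptyset \,\big|\, F_B\bigr]=\exp\Bigl(-\int_B p_{vx}\prod_{i}(1-p_{u_ix})\,d\mu\Bigr)\ \geq\ \exp\Bigl(-\int_B p_{vx}\,d\mu\Bigr)=\mathbb{P}\bigl[N(v)\cap B=\emptyset\bigr].
\]
The thinning route is exact, elementary and even quantifies the gain, but it relies on the product structure of these particular events. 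Your route uses a heavier, though standard, theorem and in exchange works for any pair of decreasing events.

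Your write-up also makes explicit the factorisation that the paper's ``combining the two inequalities'' leaves implicit. Write $E_1,E_2$ for the events that $v$ has no neighbour in $B$, resp.\ $A_1\setminus B$, and $F_1,F_2$ for the events that no $u_i$ has a neighbour in $B$, resp.\ $A_2\setminus B$. Then $\mathbb{P}[E_1E_2\mid F_1F_2]=\mathbb{P}[E_1\mid F_1]\,\mathbb{P}[E_2]$. One small precision: this needs mutual independence of the three blocks living on $B$, $A_1\setminus B$ and $A_2\setminus B$, not just the pairwise independence your disjoint case states. It follows by the same reasoning.
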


\begin{proof}
The proof for general \(r\) is identical to the case \(r = 1\), so we present only the latter for readability.

Let \(A_1, A_2 \subset \mathcal{X}\) be two subsets and let \(v, u_1\) be two vertices not contained in the sets \(A_1 \cup A_2\). Suppose we know that \(u_1\) has no neighbors in \(A_2\). Clearly, conditioning on this event decreases the expected number of vertices in \(A_2\) and in every subset of \(A_2\). Hence,
\[ 
\Prob{N(v) \cap (A_1 \cap A_2) = \emptyset \mid N(u_1) \cap A_2 = \emptyset} \geq \Prob{N(v) \cap (A_1 \cap A_2) = \emptyset}. 
\] 
Furthermore, by the properties of Poisson point process, for every set \(A_3\) which is disjoint from \(A_2\), the number of vertices in \(A_2\) and \(A_3\) is independent. We put \(A_3 := A_1 \setminus A_2\) and deduce 
\[ 
\Prob{N(v) \cap (A_1 \setminus A_2) = \emptyset \mid N(u_1) \cap A_2 = \emptyset} = \Prob{N(v) \cap (A_1 \setminus A_2) = \emptyset}. 
\] 
Combining the two inequalities gives the desired property. 
\end{proof}

We now state and prove the trajectory lemma.

\begin{lemma}[Trajectory Lemma]\label{traj_lemma}
Let \(w_0 \geq w_1(\eps_1) \), \(D_0 \geq D_1(\eps_1)\), and denote \(M := \min\{w_0, D_0\}\).
Let \(f_0(n) = \om{1}\) be any growing function such that \(f_0(n) = \oh{\log\log n}\).
Let \(s\) be the starting vertex, suppose \(D_s \geq D_0\), denote by \(P\) the geometric path induced by all vertices in \(V_{\geq D_0}\). And suppose that there exists a first vertex \(u_1 \in P \cap \overline{V}(w_0, D_0)\)
Then, with probability
\[
1 - \Oh*{ M^{-\Om{1}}},
\]
there exists a subpath \(P' = (u_1, \ldots, u_\ell) \subseteq P \) starting at \(u_1\) such that:
\begin{enumerate}[(i)]
\item \(P' \subseteq \overline{V}(w_0, D_0) \).
\item Either \(P' \subseteq V_1 \), or \(P' \subseteq V_2\), or there exists a vertex \(u_\ell'\) such that \(\{u_1, \ldots, 	u_\ell'\} \subseteq V_1 \) and \(\{u_{\ell'+1}, \ldots, u_\ell\} \subseteq V_2\).
\item Let \(\{u_i, u_{i+1}, u_{i+2}\}\) be consecutive vertices on \(P' \cap V_1 \). Then \(w_{u_{i+2}} \geq w_{u_i}^{\gamma(\zeta_1 \eps_1)}\).
\item Let \(\{u_i, u_{i+1}, u_{i+2}\}\) be consecutive vertices on \(P' \cap V_2 \), and suppose \(u_i\) is not the vertex with smallest index in \(P' \cap V_2 \). Then 	\(D_{u_{i+2}} \leq D_{u_i}^{1/\gamma(\eps_1)}\).
\item The length \(\ell\) of the path \(P'\) satisfies 
\[\ell \leq \frac{2 + \oh{1}}{|\log(\tau - 2)|} \log \log n.\]
More precisely, the length of \(P'\) is upper bounded by
\[
\frac{1 + \oh{1}}{|\log(\tau - 2)|} \left( \log \log_{w_0} (D_{u_1}^d) + \log \log_{D_0} (D_{u_1}) \right) + \Oh{f_0(n)}.
\]
\item The following holds:
\[
\mathbb{E}_{<D_0}\left[ \abs{ \N(u_\ell) \cap V^+(u_\ell, \eps_1) \cap V_{<D_0} } \right] = \Om{M^{\Om{1}}},
\]
where \(\mathbb{E}_{<D_0}\) denotes the expectation w.r.t. uncovering the vertices whose distance to target is at most \(D_0\), conditioned on the position and weight of \(u_\ell\).
\end{enumerate}
\end{lemma}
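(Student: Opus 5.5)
We would prove the Trajectory Lemma by the layer-exposure technique of~\cite{BKLMM22}, processing the layers in the order \(A_{1,1},A_{1,2},\dots,A_{1,\infty},A_{2,\infty},\dots,A_{2,2},A_{2,1}\) (decreasing distance to \(t\) in the second phase). For each layer \(L\) in this order we uncover only the vertices of \(L\) together with their weights and the edges inside the union of already uncovered layers. We then define a bad event \(\mathcal{B}_L\) for the first path vertex \(v\in L\). In a first-phase layer, \(\mathcal{B}_L\) is the event that \(v\) has a neighbor in \(V_1^-(v,\eps)\), or that \(v\) has no neighbor in \(V_1^+(v,\eps)\). Here \(\eps=\eps_1\) below \(w_0'\) and \(\eps=\eps_2\) above it. In a second-phase layer, \(\mathcal{B}_L\) is the event that \(v\) has a neighbor in the inner circle \(\mathcal{I}_\eps\), or a bad neighbor in \(\mathcal{A}_v\), or no good neighbor in \(\mathcal{A}_v\).

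If \(\mathcal{B}_L\) fails, the geometric choice is forced out of \(L\). In the first phase, every neighbor closer to \(t\) by \(R_v(\eps)/2\) has weight at least \(w_v^{\gamma(\zeta_1\eps)}\ge y_j\), and such a neighbor beats any neighbor making no such progress. In the second phase, the chosen neighbor has \(D_u\le r_v\le D_v^{1/\gamma(\eps)}<z_{j-1}\) (after the first \(V_2\)-vertex). The layer exponents were chosen exactly to make this containment work. The probabilities of the bad events are bounded by \Cref{1stphase}, \Cref{2_inner} and \Cref{2_annulus}. The required uniform bound \(\eta\le 1-\eps\) comes from \Cref{heavytail}, paid once at cost \(D_0^{-\Omega(1)}\).

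The first vertex \(v\) of \(P\) in \(V_2\), lying in \(A_{2,\star}\), is handled separately. For it we only use \Cref{2_inner,2_annulus} with its own \(\eta(v)\). These give that its successor \(u\) is a good vertex of \(V_2^+(v,\eps)\), so \(u\in V_2\) and the hypothesis of \Cref{cor:2nd_phase_decay} holds from \(u\) on. This is why \(A_{2,\star}\) may be visited twice. Items (iii) and (iv) follow since two consecutive hops cross at least one full layer; the "two steps" slack absorbs the transition and the mixed layers \(A_{1,\infty}\) and \(A_{2,\infty}\). Item (ii) follows because good neighbors of \(V_2\)-vertices stay in \(V_2\).

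The main obstacle is conditioning. When we expose layer \(L\), we already know that earlier path vertices had no neighbors in certain regions, in particular no closer neighbors. This could bias the conditional probability of \(\mathcal{B}_L\). We handle it with \Cref{lem:correlation}. The conditioning events all say that certain earlier vertices have no neighbors in certain sets. For the "no bad neighbor / no inner-circle neighbor" parts of \(\mathcal{B}_L\) this only helps. For the "exists a good neighbor" part we argue that the good-candidate region for \(v\) lies in not-yet-uncovered layers. Indeed, good neighbors of \(v\) are strictly closer to \(t\) or heavier than anything earlier vertices could have revealed, so by Poisson independence on disjoint regions these candidates are fresh. Care is needed in the first phase, where candidate annuli of different vertices may overlap geometrically; there we would restrict to fresh weight ranges, which are disjoint by the layer definitions.

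Summing the error probabilities over layers gives geometric-type sums. These are \(\sum_j y_j^{-\Omega(\eps_1)}=O(w_0^{-\Omega(1)})\) below \(w_0'\), and above it \(w_0'^{-\Omega(\eps_2)}\) times the number of layers, which is \(o(1)\cdot M^{-\Omega(1)}\) by the choice of \(\eps_2\) and \(w_0'\). The distance layers are analogous. Together this gives total failure probability \(O(M^{-\Omega(1)})\).

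For (v), we count layers. There are \(f_0(n)\) layers with rate \(\gamma(\zeta_1\eps_1)\) or \(\gamma(\eps_1)\). Above that, the number of weight layers up to weight \(D_{u_1}^{d(\tau-2)/(\tau-1)}\) is \((1+o(1))\log\log_{w_0}(D_{u_1}^d)/|\log(\tau-2)|\), since \(\gamma(\zeta_1\eps_2)\to1/(\tau-2)\). The distance layers analogously contribute \((1+o(1))\log\log_{D_0}D_{u_1}/|\log(\tau-2)|\). Each layer is visited at most once except \(A_{2,\star}\), which adds \(O(1)\) hops.

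For (vi), the path ends at the last main-part vertex \(u_\ell\), whose good neighbors fall into \(V_{<D_0}\). If \(u_\ell\in A_{2,1}\), then \(D_{u_\ell}<z_1\) and \(w_{u_\ell}\ge D_{u_\ell}^{d\eta}\). Computing, as in \Cref{2_annulus}, the expected number of good neighbors in the ball of radius \(D_0\), we get \(\Omega(D_0^{\Omega(1)})\). If instead the path jumped from the first phase directly below \(D_0\), the same expectation computation from \Cref{1stphase}(i) gives \(\Omega(w_0^{\Omega(1)})\).
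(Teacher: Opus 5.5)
Your proposal is correct and follows the paper's proof essentially step for step: the same layer ordering and induced-path exposure, the same good/bad-neighbor events controlled by \Cref{1stphase,2_inner,2_annulus,heavytail} with \Cref{lem:correlation} handling the conditioning, the special double visit of \(A_{2,\star}\), a union bound over layers, and layer counting for (v). One small correction concerns the \(\eps_2\)-regime: bounding the error by \(w_0'^{-\Omega(\eps_2)}\) times the number of layers, which is \(\Theta(\log\log n)\), fails when \(f_0\) grows very slowly; instead sum the doubly exponentially decaying terms, \(\sum_{j} y_{j-1}^{-\Omega(\eps_2)} = \Oh{w_0'^{-\Omega(\eps_2)}}\) (and analogously for the \(z_j\)), as the paper implicitly does.
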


Note that \Cref{traj_lemma} naturally applies to the situation that we have uncovered all vertices outside the ball \(B_{D_0}(t)\), but not the vertices inside. More precisely, properties in \textup{(i)} to \textup{(v)} are independent of \(V_{< D_0}\), and \textup{(vi)} concerns a marginal expectation after uncovering \(V_{\geq D_0}\) and before uncovering  \(V_{< D_0}\).

\begin{proof}
The layers \(A_{i,j}\) defined above classify all vertices \(v\) with distance \(D_v \geq D_0\). We order them by:
\[
A_{1,1} \prec \ldots \prec A_{1,j} \prec A_{1,j+1} \prec \ldots \prec A_{1,\infty} \prec A_{2,\infty} \prec \ldots \prec A_{2,j} \prec A_{2,j-1}
\prec \ldots \prec A_{2,1}.
\]
Let \(A_{2,\star}\) denote the first layer that \(P\) visited when entering the second phase. Alternatively, \(\star\) is the largest index  (possibly \(\star=\infty\)), such that \(P \cap A_{2,\star} \neq \emptyset\).
We want to show that with sufficiently high probability, the induced path \(P\) contains at most two vertices of \(A_{2,\star}\), and at most one vertex of every other layer \(A_{i,j}\), until reaching a vertex \(u_\ell\) that satisfies \textup{(vi)}.

\proofsubparagraph*{Definition of the events \(\mathcal{E}\) and \(\mathcal{E}_{i,j}\):}
Given the ordering of the layers, we denote by \(B_{i,j}\) the union of \(A_{i,j}\) and all previous layers. And by \(P_{i,j}\) we denote the geometric path induced by the set \(B_{i,j} \subset V\).

In the following, for a given pair \((i,j) \neq (2,\star)\), we consider the first vertex \(v \in P_{i,j} \cap A_{i,j}\), if it exists. We will show that with sufficiently high probability, the \emph{best} neighbor of \(v\), namely the one at minimal distance to \(t\), is located outside \(B_{i,j}\). Such phenomenon is desirable, and is captured by the ``good'' event described below. Note that for \((i,j) = (2,\star)\), we allow the \emph{best} neighbor of \(v\) to remain in \(B_{i,j}\), whose \emph{best} neighbor, is however, located outside \(B_{i,j}\). 
 
More precisely, let \(i \in \{1,2\}\), \(j \geq 1\). Let \(\mathcal{E}_{1,j}\) be the event that either \(P_{1,j} \cap A_{1,j} = \emptyset\), or the first vertex \(v \in P_{1,j} \cap A_{1,j}\)
\begin{itemize}
\item satisfies condition \textup{(vi)}, or
\item has at least one \emph{good} neighbor and no \emph{bad} neighbors. Then, the \emph{best} neighbor \(u\) satisfies \(u \in \overline{V}(w_0, D_0) \setminus B_{1,j}\) with \(D_{u} \leq D_v\) such that \(D_{u} < D_{u'}\) holds for all \(u' \in N(v) \cap B_{1,j}\).
\end{itemize}

Furthermore, for \(j \neq \star\), let \(\mathcal{E}_{2,j}\) be the event that either \(P_{2,j} \cap A_{2,j} = \emptyset\), or the first vertex \(v \in P_{2,j} \cap A_{2,j}\)
\begin{itemize}
\item satisfies condition \textup{(vi)}, or
\item has at least one \emph{good} neighbor and no \emph{bad} neighbors. Then, the \emph{best} neighbor satisfies \(u \in \overline{V}(w_0, D_0) \setminus B_{2,j}\), and \(w_u = \Th{D_u^{d\eta}}\) is such that \(\eta \geq \frac{1-\eps_0}{\tau-1}\), with \(\eps_0 \leq \eps\) sufficiently small.
\end{itemize}

Finally, we define \(\mathcal{E}_{2,\star}\) to be the event that the first vertex \(v \in P_{2,\star} \cap A_{2,\star}\)
\begin{itemize}
\item satisfies condition \textup{(vi)}, or
\item has at least one \emph{good} neighbor and no \emph{bad} neighbors. Then, the \emph{best} neighbor satisfies \(u \in \overline{V}(w_0, D_0) \setminus B_{2,\star}\), and \(w_u = \Th{D_u^{d\eta}}\) is such that \(\eta \geq \frac{1-\eps_0}{\tau-1}\), with \(\eps_0 \leq \eps\) sufficiently small.
\item \(N(v) \subseteq B_{2,\star}\), and any \emph{best} neighbor \(u\) of \(v\) such that \(D_{u} \leq D_{u'}\) holds for all \(u' \in N(v) \cap B_{2,\star}\), satisfies condition \textup{(vi)} or has at least one \emph{good} neighbor and no \emph{bad} neighbors.
\end{itemize}

Finally, we denote by \(\mathcal{E} \coloneqq \cap_{i,j} \mathcal{E}_{i,j}\) the intersection of all ``good'' events.

\proofsubparagraph*{Lower bound for \(\Prob{\mathcal{E}_{i,j}}\):}
Our goal is to lower-bound \(\Prob{\mathcal{E}_{i,j}}\) for every pair \((i,j)\) in order to lower-bound \(\Prob{\mathcal{E}}\). We will consider several different subcases, which correspond to different subphases of the routing by construction. In all cases let \(\eps \in \{\eps_1,\eps_2\}\) denote the same \(\eps\) as used for the definition of the considered layer. Clearly, we have \(\Prob{\mathcal{E}_{i,j} \mid P_{i,j} \cap A_{i,j} = \emptyset} =1\) for \((i,j)\neq (2,\star)\), and \(P_{2,\star} \cap A_{2,\star} \neq \emptyset\) by definition. Therefore, it suffices to consider \(\Prob{\mathcal{E}_{i,j} \mid P_{i,j} \cap A_{i,j} \neq \emptyset}\). 

\underline{\textit{Case \(i=1\) and \(j<\infty\)}}: We are in the first phase of the routing, and the layer \(A_{1,j}\) is contained in \(V_1' \subset V_1\).
Then by \Cref{1stphase} \textup{(i)}, we have \[\E{\abs{N(v) \cap V^+_1(v,\eps)}} = \Om{w_v^\eps}.\]
We do not expect many of these neighbors in \(V_{<D_0}\). In particular, we can upper bound \(\E{\abs{N(v) \cap V^+_1(v,\eps) \cap V_{<D_0}}}\) by replacing the region in \Cref{eq:1st_good} by \(V_{<D_0}\), and get
\begin{align*}
\E{\abs{N(v) \cap V^+_1(v,\eps) \cap V_{<D_0}}} &= \Oh*{ D_0^d \cdot \int_{R_v(\eps)^d/w_v}^\infty w^{-\tau} dw}\\
&= \Oh*{w_v^{\frac{(\tau-1)(-1+\eps)}{\tau-2}}}\\
&= \Oh*{w_v^{-\Om{\eps}}}.
\end{align*}
Hence,
\(\E{\abs{N(v) \cap V^+_1(v,\eps) \cap V_{\geq D_0}}} = \Om{w_v^\eps}.\) By a Chernoff's bound, using \(w_v \geq y_{j-1}\), with probability at least \(1 - e^{-\Om{y_{j-1}^\eps}}\) there exists a neighbor \(u\) of \(v\) in \(V^+(v,\eps) \cap V_{\geq D_0}\).
By the definition of \(V_1^+(v,\eps)\), \(u\) satisfies \(D_{u} \leq D_v -R_v(\eps)/2\) and \(u \notin B_{1,j}\) since \(w_u \geq w_v^{\gamma(\zeta_1\eps)} \geq y_{j-1}^{\gamma(\zeta_1 \eps)} = y_j\).

Now, we want to show that the routing indeed hops to a good neighbor \(u \in N(v)\setminus B_{1,j}\). Namely, every \(u' \in N(v) \cap B_{1,j}\) has \(D_{u'} > D_{u}\). Suppose \(u' \in B_{1,j}\) and \(D_{u'} \leq D_v - R_v(\eps)/2\). Since \(w_{u'} < y_j = y_{j-1}^{\gamma(\zeta_1\eps)} \leq w_v^{\gamma(\zeta_1\eps)}\), we must have \(u' \in V_1^-(v,\eps)\). Thus, we want to upper bound \(\E{\abs{N(v) \cap V_1^-(v,\eps)}}\), and show that with sufficiently high probability, no bad neighbors of \(v\) exist.

Note that \(v\) cannot be treated as a fixed vertex to compute this expectation, since \(v\) is defined as the first vertex in \( P_{1,j} \cap A_{1,j}\), but the induced path \(P_{1,j}\) depends on \(V_1^-(v,\eps)\). Thus \(v\) is in fact a random variable. However, exposing \(V_1^-(v,\eps)\) changes \(v\) if and only if there is some \(u' \in V_1^-(v,\eps)\) which is adjacent to an earlier vertex on \(P_{1,j}\), because then \(u'\) would be the first vertex in \( P_{1,j} \cap A_{1,j} \) and not \(v\).
Hence, to compute \(\Prob{N(v) \cap V_1^-(v,\eps) = \emptyset}\) for our random \(v\), we compute \(\Prob{N(v) \cap V_1^-(v,\eps) = \emptyset}\) for a fixed \(v\) and condition on that no earlier vertex on the path \(P_{1,j}\) has a neighbor \(u' \in V_1^-(v,\eps)\). 
By \Cref{lem:correlation}, this condition decreases the expected number of vertices in \(V_1^-(v,\eps)\), and therefore also the expected number of neighbors of \(v\) in \(V_1^-(v,\eps)\).
Hence, in order to upper bound the probability that \(N(v) \cap V_1^-(v,\eps) \neq \emptyset\), we are allowed to neglect the dependencies. Since \(y_{j-1} \geq w_1(\eps) = \Oh{e^{d/\eps}}\), by \Cref{1stphase} \textup{(ii)} and Markov's inequality we deduce
\begin{align*}
\Prob{N(v) \cap V_1^-(v,\eps) = \emptyset \mid v \in A_{1,j} \cap P_{1,j}} &\leq \Prob{N(v) \cap V_1^-(v,\eps) = \emptyset}\\
&\leq \E{\abs{N(v) \cap V_1^-(v,\eps)}}\\
& = \Oh{y_{j-1}^{-\Om{\eps}}}.
\end{align*}
Therefore, with probability at least \(1 - \Oh{y_{j-1}^{-\Om{\eps}}}\), \(v\) has no neighbor in \(V_1^-(v,\eps)\) and thus every \(u' \in N(v)\cap B_{1,j}\) has distance \(D_{u'} > D_v - R_v(\eps)/2 \geq D_{u}\) as desired. It follows
\[
\Prob{\mathcal{E}_{1,j}} \geq 1 - \Oh{ y_{j-1}^{-\Om{\eps}}}.
\]
Depending on whether \(\eps\) is \(\eps_1\) or \(\eps_2\), the exponent of the above expression is either \(-\Om{1}\) or \(-\Om{(\log \log f_0(n))^{-1}}\).

\underline{\textit{Case \(i=1\) and \(j=\infty\)}}: A lower bound for \(\Prob{\mathcal{E}_{i.\infty}}\) can be obtained in a similar way. Let \(v \in P_{1,\infty} \cap A_{1,\infty} \) be the first vertex in this extra layer, then by definition \(w_v^{1+\gamma(\eps_1)} \leq D_v^d \leq w_v^{1+\gamma(\zeta_1\eps_2)}\). Therefore, we can apply \Cref{1stphase} with \(\eps = \eps_1\) (but not with \(\zeta_1\eps_2\)). Then similarly, every vertex \(u \in V_1^+(v, \eps_1)\) satisfies
\[
w_u^{1+\gamma(\eps_1)} \geq w_v^{\gamma(\zeta_1\eps_1)\cdot(1+\gamma(\eps_1))} \geq w_v^{\gamma(\zeta_1\eps_1)\cdot\frac{1+\gamma(\eps_1)}{1+\gamma(\zeta_1\eps_2)}} D_v^d \geq D_u^d,
\]
where one can verify \(\gamma(\zeta_1\eps_1)\cdot\frac{1+\gamma(\eps_1)}{1+\gamma(\zeta_1\eps_2)}\geq 1\) when \(\eps_1\) is sufficiently small. Therefore, \(u \in V_1^+(v, \eps_1)\), which implies \(u \in V_2\). Moreover, by exactly the same argument as in the case \(i=1\) and \(j<\infty\), we upper bound \(\E{\abs{N(v) \cap V_1^-(v,\eps_1)}}\) by applying \Cref{lem:correlation,1stphase} \textup{(i)}, and deduce that
\[
\Prob{\mathcal{E}_{1,\infty}} \geq 1 - \Oh{ w_0^{-\Om{\eps_1}}}.
\]

\underline{For \(i=2\):}

We consider the event \(\mathcal{E}_2\) such that \(\forall\,v\in V_{\geq D_0}\,,\,w_v \leq \Th*{D_v^{d(1-\eps_1)}}\). Note that by taking \(\eps_1\) sufficiently small, by \Cref{heavytail} we have
\[\Prob*{\mathcal{E}_2} \geq 1- D_0^{-\Om{\eps_1}}.\]
By admitting \(\mathcal{E}_2\), we may assume \(\eta\) is bounded away from \(1\). Therefore, we can choose \(\eps'\) sufficiently small such that the inner radius \(r_I = D_v^{(1-\eta-\eps')(\tau-1)} = D_v^{\Om{\eps'}}\), and thus assume \(r_I\geq D_0\) for all but the last layer \(A_{2,1}\), as the growth in \(D_v\) dominates the decay of \(\eps'\).

\underline{\textit{Case \(i=2\) and \(1<j<\star\)}}: For \(1<j<\star\), we can again assume that the set \(P_{2,j} \cap A_{2,j}\) is non-empty, and the first vertex \( v \in P_{2,j} \cap A_{2,j}\) does not satisfy \textup{(vi)}. By \Cref{2_annulus} \textup{(i)} and the argument in the paragraph above,
\begin{align*}
\Prob*{N(v) \cap V_2^+(v,\eps') \cap V_{\geq D_0} \neq \emptyset} &\geq \Prob*{N(v) \cap V_2^+(v,\eps') \cap V_{\geq r_I} \neq \emptyset}\\
&\geq \Prob*{N(v) \cap V_2^+(v,\eps') \cap V(\mathcal{A}_v) \neq \emptyset}\\
&\geq 1-e^{-D_v^{\Om{\eps'}}},
\end{align*}
for \(\eps'\) sufficiently small. 

Suppose \( u\in N(v) \cap V_2^+(v,\eps') \cap V(\mathcal{A}_v)\). Since \(j< \star\), by \Cref{cor:2nd_phase_decay}, we can write \(w_v = \Th{D_v^{d\eta(v)}}\), where \(\eta(v) \geq \frac{1-\eps''}{\tau-1}\) for some \(\eps''\) sufficiently small. Hence, we deduce
\[
D_u \leq D_v^{(1-\eta(v)+\eps')(\tau-1)} \leq D_v^{1/\gamma(\eps)} < z_j^{1/\gamma(\eps)} = z_{j-1},
\]
where the first inequality holds by the definition of \(\mathcal{A}_v\), and the second inequality holds by taking \(\eps'\), \(\eps''\) sufficiently small. Therefore, \(u \notin B_{2,j}\).

It remains to show that the next node selected by the protocol would again satisfy desired weight condition, i.e.\ would be a good neighbor. Now suppose \(u' \in N(v)\) satisfies \(D_{u'} \leq D_v^{(1-\eta(v)+\eps')(\tau-1)}\), but \(w_{u'} = \Th{D_{u'}^{d\eta(u')}}\) is such that \(\eta(u')\) is not sufficiently close to \(\frac{1}{\tau-1}\), that is \(\eta(u') \leq \frac{1}{\tau-1}\left( 1- \frac{(\zeta_2+1)\eps}{1-\eta+\eps}\right)\). Then \Cref{lem:correlation,2_annulus} \textup{(ii)} tell us the probability that there exists no such neighbor \(u' \in N(v)\) is \(1-D_v^{-\Om{\eps}}\).
As a result,
\[\Prob{\mathcal{E}_{2,j}} \geq 1 -\Oh{z_{j-1}^{-\Om{\eps}}}.\]
Depending on whether \(\eps\) is \(\eps_1\) or \(\eps_2\), the exponent of the above expression is either \(-\Om{1}\) or \(-\Om{(\log \log f_0(n))^{-1}}\).

\underline{\textit{Case \(i=2\) and \(j=\star\)}}: The argument for \(j = \star\) works similar as \(j < \star\). The only difference is that the first vertex \( v \in P_{2,\star} \cap A_{2,\star}\) where the random variable \(\eta(v)\) in \(w_v = \Th{D_v^{d\eta(v)}}\) might not necessarily satisfy \(\eta(v) \geq \frac{1-\eps''}{\tau-1}\). In the case that \(v\) does not satisfy condition \textup{(vi)}, nor has a good neighbor outside \(B_{2,\star}\). We consider the next node \(v'\) selected by the protocol. Note that \(D_{v'} \leq D_v\) and \(v' \in B_{2,\star}\), thus \(v' \in A_{2,\star}\). We may now apply the same argument as in the case \(i=2\) and \(j<\star\) to \(v'\) instead of \(v\). Therefore,
\[\Prob{\mathcal{E}_{2,\star}} \geq 1 -\Oh{D_0^{-\Om{\eps_1}}}.\]

\underline{\textit{Case \(i=2\) and \(j=1\)}}: In this final layer, we may not simply assume \(r_I \geq D_0\). However, \Cref{2_annulus} still guarantees that
\begin{align*}
\Prob*{N(v) \cap V_2^+(v,\eps') \cap V(\mathcal{A}_v)\neq \emptyset} \geq 1-e^{-D_v^{\Om{\eps'}}},
\end{align*}
for \(\eps' <\eps_1\) sufficiently small. In this case, \(u \in N(v) \cap V_2^+(v,\eps')\) satisfies \(D_u \leq D_v^{(1-\eta(v)+\eps')(\tau-1)} \leq D_v^{1/\gamma(\eps_1)} < z_1^{1/\gamma(\eps_1)} = z_0 = D_0\). That is \(N(v) \cap V_2^+(v,\eps') = N(v) \cap V_2^+(v,\eps') \cap V_{<D_0}\), and thus 
\[
\E{\abs{V_2^+(v,\eps_1) \cap V_{<D_0}}} \geq \E{\abs{V_2^+(v,\eps') \cap V_{<D_0}}} \geq \Om{M^{\Om{1}}},
\]
we must have \textup{(vi)} holds. Hence,
\[\Prob{\mathcal{E}_{2,1}} \geq 1 - e^{-D_0^{\Om{\eps_1}}}.\]

\proofsubparagraph*{Lower bound for \(\Prob{\mathcal{E}}\):} We have computed a lower bound for \(\Prob{\mathcal{E}_{i,j}}\) for every layer \(A_{i,j}\). Combining the above, a union bound yields
\begin{align*}
\Prob{\mathcal{E}} &\geq 1 - \sum_{j\geq 1} \Prob{\mathcal{E}_{i,j}^c} - \Prob{\mathcal{E}_{1,\infty}^c} - \Prob{\mathcal{E}_{2}^c} - \sum_{j \geq 2}^{\star} \Prob{\mathcal{E}_{2,j}^c} - \Prob{\mathcal{E}_{2,1}^c} \\
&= 1 - \Oh*{ w_0^{-\Om{1}} + w_0'^{-\Om{\eps_2}} + D_0^{-\Om{1}} + D_0'^{-\Om{\eps_2}}  } \\
&= 1 - \Oh*{\min\{w_0, D_0\}^{-\Omega(1)}}.
\end{align*}

\proofsubparagraph*{Construction of \(P'\) (proves \textup{(i)}, (ii), (vi)):} 
Having proven that \(E\) occurs with sufficiently high probability, it remains to show that this event implies the desired properties. Suppose that \(E\) occurs and that \(P\) visits at least one vertex in \(\overline{V}(w_0, D_0)\), and thus there exists a first vertex \(u_1 \in P \cap \overline{V}(w_0, D_0) \) in some layer \(A_{i,j}\). We construct a subpath \(P' \subseteq P\) inductively whose existence implies \textup{(i) - (iv)}.

\underline{Suppose that \(u_1 \in V_1\):} 

There exists a maximal subpath \(P_1 = \{u_1, \dots, u_{\ell'}\} \subseteq P\) starting at \(u_1\) such that \(P_1 \subset V_1\) and no vertex before \(u_{\ell'}\) satisfies \textup{(vi)}. We start with an analysis of this subpath \(P_1\) and show by induction that every vertex of \(P_1\) is located in a subsequent layer compared to its predecessor. The induction hypothesis holds trivially for the base case \(\{u_1\}\). 

Assume that the induction hypothesis holds for \(\{u_1, \dots, u_i\}\). Then \(u_i\) is contained in a layer \(A_{1,j}\), and by assumption, \(u_i\) is the first vertex that \(P\) visits in this layer. Moreover, by the induction hypothesis it follows that \(\{u_1, \dots, u_i\}\) is also a subpath of the induced greedy path \(P_{1,j}\). As the event \(\mathcal{E}_{1,j}\) occurs, we have one of the following:
\begin{itemize}
    \item \(u_i\) satisfies \textup{(vi)}: Then set \(P'=(u_1, \dots, u_i)\) with \(u_\ell \coloneqq u_i\).
    \item \(u_i\) has a good neighbor and no bad neighbors: Let \(u_{i+1}\) be the best neighbor of \(u_i\), then \(u_{i+1} \in V_2\), or \(u_{i+1}\) is located in a layer \(A_{1,j'}\) where \(j' > j\). Since the protocol makes greedy choices, \(u_{i+1}\) follows \(u_i\) on the path \(P\).
\end{itemize}
This proves the induction hypothesis. We see that \(P_1\) traverses the layers \(A_{1,j}\) according to the ordering and visits each layer at most once. Furthermore, applying the same argument for the last vertex \(u_{\ell'} \in P_1\) shows that either \(u_{\ell'}\) satisfies \textup{(vi)}, or it has a good neighbor in a layer \(A_{2,j}\). In the former case, we would choose \(u_\ell = u_{\ell'}\) and \(P'=P_1\), and \textup{(i)}, \textup{(ii)}, \textup{(vi)} follow directly. Otherwise, we continue to find a path \(P'\) such that \(P' \cap V_1 = P_1\).

\underline{Suppose that the last vertex \(u_{\ell'} \in P_1\) does not satisfy \textup{(vi)} or \(u_1 \in V_2\):}  

There exists a first vertex \(u_{\ell'+1} \in P \cap V_2 \cap V_{>D_0}\), and a maximal subpath \(P_2 = P_1 \cup \{u_{\ell'+1}, \dots, u_\ell\} \subseteq P\) starting at \(u_1\) such that \((P_2 \setminus P_1) \subset V_2 \cap V_{>D_0}\) and no vertex before \(u_\ell\) satisfies \textup{(vi)}. We continue by induction and prove that every vertex of \(P_2\), except possibly \(u_{\ell'+2}\), is located in a subsequent layer compared to its predecessor. By assumption, the hypothesis holds for \(P_1 \cup \{u'_{\ell'+1}\}\). For \(P_1 \cup \{ u_{\ell'+1}, u_{\ell'+2}\}\), either \(u_{\ell'+2} \in A_{2,\star}\) or \(u_{\ell'+2} \in A_{2,j}\) for \(j<\star\), the induction hypothesis also holds. 

Assume that the induction hypothesis holds for \(P_1 \cup \{u'_{\ell'+1}, \dots, u_i\}\), where \(i\geq \ell'+2\). Then either \(u_i\) is the second vertex that \(P\) visited in layer \(A_{2,\star}\); or \(u_i\) is contained in a layer \(A_{2,k}\) with \(k<\star\), and \(u_i\) is the first vertex that \(P\) visits in \(A_{2,k}\). Moreover, \(P_1 \cup \{u'_{\ell'+1}, \dots, u_i\}\) is also a subpath of the induced greedy path \(P_{2,j}\) (possibly \(j=\star\)). Consider the event \(\mathcal{E}_{2,j}\) for both \(j=\star\) and \(j<\star\), we have one of the following:
\begin{itemize}
	\item \(u_i\) satisfies \textup{(vi)}: Then set \(P'=(u_1, \dots, u_i)\) with \(u_\ell := u_i\).
	\item \(u_i\) has a good neighbor and no bad neighbors: Let \(u_{i+1}\) be the best neighbor of \(u_i\). Then \(u_{i+1} \in A_{2,j'}\) for \(j' < j\). Since the protocol makes greedy choices, \(u_{i+1}\) follows \(u_i\) on the path \(P\).
\end{itemize}
This proves the induction hypothesis. We observe that \(P_2\) traverses the layers \(A_{2,j}\) according to our ordering, visiting \(A_{2,\star}\) at most twice and every other layer \(A_{2,k}\) (for \(k<\star\)) at most once. However, by repeating the argument for the last vertex \(u_\ell\) it follows that the best neighbor of \(u_\ell\) can only be in \(V_{\leq D_0}\). Then the only possibility is that \(u_\ell\) satisfies \textup{(vi)}. Hence we put \(P' = P_2\), and \textup{(i)}, \textup{(ii)}, and \textup{(vi)} again follow. In particular, we see that \(P' \cap V_1 = P_1\) as claimed above.

\paragraph*{Proof of properties \textup{(iii)}, \textup{(iv)}:}
If \(\{u_i, u_{i+1}, u_{i+2}\}\) are three subsequent vertices on the subpath \(P_1 \subset V_1\), then the weight increases at least by an exponent \(\gamma(\zeta_1\eps_1)\) between \(u_i\) and \(u_{i+2}\) since there exists at least one layer in-between containing \(u_{i+1}\). Similarly, for three subsequent vertices \(\{ u_i, u_{i+1}, u_{i+2}\}\) on the subpath \(\{ u_{\ell'+2}, ..., u_{\ell}\}\), the distance to \(t\) decreases at least by an exponent \(\gamma(\eps_1)\) between \(u_i\) and \(u_{i+2}\) as there lies at least one layer in-between containing \(u_{i+1}\).

\paragraph*{Proof of properties \textup{(v)}:}
We see that the path \(P'\) visits the special layer \(A_{2,\star}\) at most twice and every other layer at most once.
Therefore we can upper-bound the length of \(P'\) by counting the total number of potentially visited layers (differ by at most one).

By construction, the first \(f_0(n)\) layers \(A_{1,j}\) are defined via \(\varepsilon_1\), and similarly the final \(f_0(n)\) layers \(A_{2,j}\) are defined via \(\varepsilon_1\).

Let \(L_1\) denote the number of layers \(A_{1,j}\) defined via \(\eps_2 = o(1)\) which the routing potentially visited.
Observe that every vertex \(v \in P'\) satisfies \(D_v \leq D_{u_1}\). Thus, every vertex \(v \in P'\) with weight at least \(D_{u_1}^{d/(1+\gamma(\eps_1))} \geq D_{v}^{d/(1+\gamma(\eps_1))}\) belongs to \(V_2\). Therefore, we only need to count layers \(A_{1,j}\) which contain vertices of weight at most \(D_{u_1}^{d/(1+\gamma(\eps_1))}\). Then \(L_1\) is upper-bounded by the solution of \(w_0^{\gamma(\eps_2)^{L_1}} = D_{u_1}^d\). It follows
\[
L_1 \leq \frac{\log \log_{w_0}(D_{u_1}^d)}{\log \gamma(\eps_2)} \leq \frac{\log \log_{w_0}(D_{u_1}^d)}{\log((\tau-2)^{-1/(1+\oh{1})})} = \frac{1+\oh{1}}{\abs{\log(\tau-2)}} \log \log_{w_0}(D_{u_1}^d).
\]

Let \(L_2\) denote the number of layers \(A_{2,j}\) defined via \(\eps_2 = o(1)\) which the routing potentially visited. In order to upper-bound \(L_2\), note that the distance to target \(t\) is always decreasing, thus every vertex \(v \in P'\) must satisfies \(D_v \leq D_{u_1}\). Clearly, this implies that we only need to consider layers \(A_{2,j}\) where \(z_{j-1} \leq D_{u_1}\). Therefore, \(L_2\) is upper-bounded by the solution of \(D_0^{\gamma(\eps_2)^{L_2}} = D_{u_1}\), and we obtain
\[
L_1 \leq \frac{\log \log_{D_0}(D_{u_1})}{\log \gamma(\eps_2)} \leq \frac{\log \log_{D_0}(D_{u_1})}{\log((\tau-2)^{-1/(1+\oh{1})})} = \frac{1+\oh{1}}{\abs{\log(\tau-2)}} \log \log_{D_0}(D_{u_1}).
\]

The length of \(P'\) is at most \(L_1 + L_2 + \Oh*{f_0(n)}\), which yields the desired result. Moreover, we have \(D_{u_1} \leq n^{1/d}\) (which is also the typical case), thus the length \(\ell\) of \(P'\) satisfies:
\[
\ell \leq \frac{2+\oh{1}}{\abs{\log(\tau-2)}} \log \log n.
\]
This proves \textup{(v)}.
\end{proof}

\end{document}

%% file: macrosetup.tex
\newcommand{\N}{\ensuremath{\mathbb{N}}}

\DeclarePairedDelimiter\abs{\lvert}{\rvert}
\DeclarePairedDelimiter\norm{\lVert}{\rVert}

\renewcommand{\epsilon}{\ensuremath{\varepsilon}}
\newcommand{\eps}{\epsilon}

\renewcommand{\phi}{\ensuremath{\varphi}}

\DeclarePairedDelimiter{\Paren}{(}{)}
\DeclarePairedDelimiter{\Bracket}{[}{]}
\DeclarePairedDelimiter{\Brace}{\{}{\}}

\NewDocumentCommand{\Oh}{som}{%
  \ensuremath{\mathcal{O}\IfBooleanTF{#1}{%
    \Paren*{#3}
  }{%
    \IfNoValueTF{#2}{%
      \Paren{#3}
    }{%
      \Paren[#2]{#3}
    }%
  }%
}}

\NewDocumentCommand{\Om}{som}{%
\ensuremath{\Omega\IfBooleanTF{#1}{%
    \Paren*{#3}%
  }{%
    \IfNoValueTF{#2}{%
      \Paren{#3}%
    }{%
      \Paren[#2]{#3}%
    }%
  }%
}}

\NewDocumentCommand{\Th}{som}{%
\ensuremath{\Theta\IfBooleanTF{#1}{%
    \Paren*{#3}%
  }{%
    \IfNoValueTF{#2}{%
      \Paren{#3}%
    }{%
      \Paren[#2]{#3}%
    }%
  }%
}}

\NewDocumentCommand{\oh}{som}{%
\ensuremath{o\IfBooleanTF{#1}{%
    \Paren*{#3}%
  }{%
    \IfNoValueTF{#2}{%
      \Paren{#3}%
    }{%
      \Paren[#2]{#3}%
    }%
  }%
}}

\NewDocumentCommand{\om}{som}{%
\ensuremath{\omega\IfBooleanTF{#1}{%
    \Paren*{#3}%
  }{%
    \IfNoValueTF{#2}{%
      \Paren{#3}%
    }{%
      \Paren[#2]{#3}%
    }%
  }%
}}

\NewDocumentCommand{\Prob}{som}{%
\ensuremath{\mathbb{P}
  \IfBooleanTF{#1}{%
    \Bracket*{#3}%
  }{%
    \IfNoValueTF{#2}{%
      \Bracket{#3}%
    }{%
      \Bracket[#2]{#3}%
    }%
  }%
}}

\NewDocumentCommand{\E}{som}{%
\ensuremath{\mathbb{E}
  \IfBooleanTF{#1}{%
    \Bracket*{#3}%
  }{%
    \IfNoValueTF{#2}{%
      \Bracket{#3}%
    }{%
      \Bracket[#2]{#3}%
    }%
  }%
}}

\NewDocumentCommand{\Max}{s o m m}{%
\ensuremath{\max
  \IfBooleanTF{#1}{%
    \Brace*{#3,\, #4}%
  }{%
    \IfNoValueTF{#2}{%
      \Brace{#3,\, #4}%
    }{%
      \Brace[#2]{#3,\, #4}
    }%
  }%
}}

\NewDocumentCommand{\Min}{s o m m}{%
\ensuremath{\min
  \IfBooleanTF{#1}{%
    \Brace*{#3,\, #4}%
  }{%
    \IfNoValueTF{#2}{%
      \Brace{#3,\, #4}%
    }{%
      \Brace[#2]{#3,\, #4}
    }%
  }%
}}

\NewDocumentCommand{\Exp}{som}{%
\ensuremath{\exp
  \IfBooleanTF{#1}{%
    \Paren*{#3}%
  }{%
    \IfNoValueTF{#2}{%
      \Paren{#3}%
    }{%
      \Paren[#2]{#3}%
    }%
  }%
}}

\NewDocumentCommand{\Log}{som}{%
\ensuremath{\log
  \IfBooleanTF{#1}{%
    \Paren*{#3}%
  }{%
    \IfNoValueTF{#2}{%
      \Paren{#3}%
    }{%
      \Paren[#2]{#3}%
    }%
  }%
}}

\NewDocumentCommand{\Var}{som}{%
\ensuremath{\mathrm{Var}
  \IfBooleanTF{#1}{%
    \Bracket*{#3}%
  }{%
    \IfNoValueTF{#2}{%
      \Bracket{#3}%
    }{%
      \Bracket[#2]{#3}%
    }%
  }%
}}

\NewDocumentCommand{\SetBuilder}{s o m m}{%
\ensuremath{\IfBooleanTF{#1}{%
    \Brace*{\,#3 \mid #4\,}%
  }{%
    \IfNoValueTF{#2}{%
      \Brace{\,#3 \mid #4\,}%
    }{%
      \Brace[#2]{\,#3 \mid #4\,}%
    }%
  }%
}}
\newcommand{\SetB}{\SetBuilder}